\documentclass[11pt,a4paper]{amsart}
\usepackage{graphicx}
\usepackage{amsmath,amsfonts}
\usepackage{amsthm,amssymb,latexsym}
\usepackage[active]{srcltx}
\usepackage[usenames]{color}

\newcommand{\bfs}{\boldsymbol}

\vfuzz2pt 
\hfuzz2pt 
\newtheorem{theorem}{Theorem}[section]
\newtheorem{corollary}[theorem]{Corollary}
\newtheorem{lemma}[theorem]{Lemma}

\newtheorem*{fact*}{Fact}
\newtheorem{proposition}[theorem]{Proposition}
\theoremstyle{definition}

\newtheorem{remark}[theorem]{Remark}
\numberwithin{equation}{section}

\newcommand{\N}{\mathbb N}
\newcommand{\Z}{\mathbb Z}
\newcommand{\G}{\mathbb G}

\newcommand{\A}{\mathbb A}
\newcommand{\F}{\mathbb F}
\newcommand{\K}{\mathbb K}
\newcommand{\R}{\mathbb R}
\newcommand{\Pp}{\mathbb P}

\newcommand{\fq}{\F_{\hskip-0.7mm q}}

\newcommand{\fqtwo}{\F_{\hskip-0.7mm q^{2}}}
\newcommand{\fqi}{\F_{\hskip-0.7mm q^i}}
\newcommand{\cfq}{\overline{\F}_{\hskip-0.7mm q}}
\textheight 24cm
\addtolength{\oddsidemargin}{-.475in}
    \addtolength{\evensidemargin}{-.475in}
    \addtolength{\textwidth}{0.95in}

    \addtolength{\topmargin}{-.875in}
\def\ifm#1#2{\relax \ifmmode#1\else#2\fi}


\newcommand{\klk}    {\ifm {,\ldots,} {$,\ldots,$}}

\newcommand{\plp}    {\ifm {+\cdots+} {$+\ldots+$}}


\newcommand{\wt}    {\ifm {{\sf wt}} {{$\sf wt$}}}


\begin{document}

\title[Symmetric Complete Intersections]{Number of rational points
of symmetric complete intersections over a finite field and
applications}
\author[G. Matera et al.]{
%
Guillermo Matera${}^{1,2}$,
%
Mariana P\'erez${}^1$,
%
and Melina Privitelli${}^3$}

\address{${}^{1}$Instituto del Desarrollo Humano,
Universidad Nacional de Gene\-ral Sarmiento, J.M. Guti\'errez 1150
(B1613GSX) Los Polvorines, Buenos Aires, Argentina}
\email{\{gmatera,\,vperez\}@ungs.edu.ar}
\address{${}^{2}$ National Council of Science and Technology (CONICET),
Ar\-gentina}
\address{${}^{1}$Instituto de Ciencias,
Universidad Nacional de Gene\-ral Sarmiento, J.M. Guti\'errez 1150
(B1613GSX) Los Polvorines, Buenos Aires, Argentina}
\email{mprivite@ungs.edu.ar}

\thanks{The authors were partially supported by the grants
PIP CONICET 11220130100598, PIO CONICET-UNGS 14420140100027 and UNGS
30/3084.}

\keywords{Finite fields, symmetric polynomials, complete
intersections, singular locus, factorization patterns, value sets}%

\date{\today}%

\begin{abstract}
We study the set of common $\fq$--rational zeros of systems of
multivariate symmetric polynomials with coefficients in a finite
field $\fq$. We establish certain properties on these polynomials
which imply that the corresponding set of zeros over the algebraic
closure of $\fq$ is a complete intersection with ``good'' behavior
at infinity, whose singular locus has a codimension at least two or
three. These results are used to estimate the number of
$\fq$--rational points of the corresponding complete intersections.
Finally, we illustrate the interest of these estimates through their
application to certain classical combinatorial problems over finite
fields.
\end{abstract}

\maketitle
%
%
\section{Introduction}
Several problems of coding theory, cryptography or combinatorics
require the study of the set of rational points of varieties defined
over a finite field $\fq$ on which the symmetric group of
permutations of the coordinates acts. In coding theory, deep holes
in the standard Reed--Solomon code over $\fq$ can be expressed by
the set of zeros with coefficients in $\fq$ of certain symmetric
polynomial (see, e.g., \cite{ChMu07} or \cite{CaMaPr12}). Further,
the study of the set of $\fq$--rational zeros of a certain class of
symmetric polynomials is fundamental for the decoding algorithm for
the standard Reed--Solomon code over $\fq$ of \cite{Sidelnikov94}.
In cryptography, the characterization of monomials defining an
almost perfect nonlinear polynomial or a differentially uniform
mapping can be reduced to estimate the number of $\fq$--rational
zeros of certain symmetric polynomials (see, e.g., \cite{Rodier09}
or \cite{AuRo09}). In \cite{GoMa15}, an optimal representation for
the set of $\fq$--rational points of the trace--zero variety of an
elliptic curve defined over $\fq$ has been obtained by means of
symmetric polynomials. Finally, it is also worth mentioning that
applications in combinatorics over finite fields, as the
determination of the average cardinality of the value set and the
distribution of factorization patterns of families of univariate
polynomials with coefficients in $\fq$, has also been expressed in
terms of the number of common $\fq$--rational zeros of symmetric
polynomials defined over $\fq$ (see \cite{CeMaPePr14} and
\cite{CeMaPe15}).

In \cite{CaMaPr12}, \cite{CeMaPePr14} and \cite{CeMaPe15} we have
developed a methodology to deal with some of the problems mentioned
above. This methodology relies on the study of the geometry of the
set of common zeros of the symmetric polynomials under consideration
over the algebraic closure $\cfq$ of $\fq$. By means of such a study
we have been able to prove that in all the cases under consideration
the set of common zeros in $\cfq$ of the corresponding symmetric
polynomials is a complete intersection, whose singular locus has a
``controlled'' dimension. This has allowed us to apply certain
explicit estimates on the number of $\fq$--rational zeros of
projective complete intersections defined over $\fq$ (see, e.g.,
\cite{GhLa02a}, \cite{CaMa07}, \cite{CaMaPr15} or \cite{MaPePr16})
to obtain a conclusion for the problem under consideration.

The analysis of \cite{CaMaPr12}, \cite{CeMaPePr14} and
\cite{CeMaPe15} has several points in common, which may be put on a
common basis that might be useful for other problems. For this
reason, in this paper we present a unified and generalized framework
where a study of the geometry of complete intersections defined by
symmetric polynomials with coefficients in $\fq$ can be carried out
along the lines of the papers above. More precisely, let $Y_1\klk
Y_s$ be indeterminates over $\fq$ and let $S_1\klk
S_m\in\fq[Y_1,\ldots,Y_s]$. Consider the weight
$\wt:\fq[Y_1,\ldots,Y_s]\to\N$ defined by setting $\wt(Y_j):=j$ for
$1\leq j \leq s$ and denote by $S_1^{\wt}\klk S_m^{\wt}$ the
components of highest weight of $S_1\klk S_m$. Let
$(\partial\bfs{S}/
\partial \bfs{Y})$ and $(\partial\bfs{S}^{\wt}/
\partial \bfs{Y})$ the Jacobian matrices of
$S_1\klk S_m$ and $S_1^{\wt}\klk S_m^{\wt}$ with respect to $Y_1\klk
Y_s$ respectively. Suppose that
\begin{itemize}
\item[(1)]  $(\partial\bfs{S}/
\partial \bfs{Y})$ has full rank on every point of the affine variety
$V(S_1\klk S_m)\subset\A^s$,
\item[(2)]  $(\partial\bfs{S}^{\wt}/
\partial \bfs{Y})$ has full rank on every point of the affine variety $V(S_1^{\wt}\klk
S_m^{\wt})\subset\A^s$.
\end{itemize}
Finally, if $X_1,\ldots,X_r$ are new indeterminates over $\fq$ and
$\Pi_1,\ldots,\Pi_s$ are the first $s$ elementary symmetric
polynomials of $\fq[X_1,\ldots,X_r]$, we consider the polynomials
$R_1,\ldots,R_m\in\fq[X_1,\ldots,X_r]$ defined as
$$R_i:=S_i(\Pi_1,\ldots,\Pi_s)\quad (1\leq i \leq m).$$
We shall prove that the affine variety $V:=V(R_1\klk
R_m)\subset\A^r$ is a complete intersection whose geometry can be
studied with similar arguments as those in the papers cited above.
Further, we shall show how estimates on the number of
$\fq$--rational points of projective complete intersections can be
applied in this unified framework to estimate the number of
$\fq$--rational points of $V$. As a consequence, we obtain the
following result.
\begin{theorem}\label{theorem: main result - intro}
Let assumptions and notations be as above. Denote
$\delta:=\prod_{i=1}^s\deg R_i$ and $D:=\sum_{i=1}^s(\deg R_i-1)$.
If $|V(\fq)|$ is the cardinality of the set $V(\fq)$ of
$\fq$--rational points of $V$, then the following estimate holds:
$$\big||V(\fq)|-q^{r-m}\big|\le  14 D^3 \delta^2(q+1)q^{r-m-2}.$$
\end{theorem}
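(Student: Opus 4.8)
The plan is to first establish the geometric properties of $V$ and of its projective closure $\overline{V}\subset\Pp^r$ that are asserted above—both are complete intersections of the expected dimension, $\overline{V}$ has no component at infinity, and $\operatorname{sing}(\overline{V})$ has codimension at least two—and then to feed these into the explicit estimates for the number of $\fq$--rational points of projective complete intersections recalled in the Introduction. For the geometric part I would argue as follows. Put $\mathcal{W}:=V(S_1\klk S_m)\subset\A^s$; by hypothesis~(1) the Jacobian $(\partial\bfs{S}/\partial\bfs{Y})$ has rank $m$ at every point of $\mathcal{W}$, so $\mathcal{W}$ is a smooth complete intersection of pure dimension $s-m$ generated by a radical ideal. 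The morphism $\bfs{\Pi}:=(\Pi_1\klk\Pi_s)\colon\A^r\to\A^s$ is a truncation of the finite morphism $\A^r\to\A^r$ given by all $r$ elementary symmetric polynomials, so every fibre of $\bfs{\Pi}$ has pure dimension $r-s$; hence $V=\bfs{\Pi}^{-1}(\mathcal{W})$ has pure dimension $(s-m)+(r-s)=r-m$, which forces $R_1\klk R_m$ to be a regular sequence and $V\subset\A^r$ to be a complete intersection of dimension $r-m$. Since $\wt(Y_j)=j=\deg\Pi_j$, one checks that $\deg R_i=\wt(S_i)$ and that the homogeneous component of highest degree of $R_i$ equals $S_i^{\wt}(\Pi_1\klk\Pi_s)$, a nonzero form of degree $\deg R_i$; in particular $\deg\overline{V}\le\delta$ by B\'ezout, and the points at infinity of $\overline{V}$ are the common projective zeros of $S_1^{\wt}(\Pi_1\klk\Pi_s)\klk S_m^{\wt}(\Pi_1\klk\Pi_s)$. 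Repeating the dimension count with $\mathcal{W}^{\wt}:=V(S_1^{\wt}\klk S_m^{\wt})$—a complete intersection of dimension $s-m$ by hypothesis~(2)—in place of $\mathcal{W}$ shows that $\overline{V}\cap\{X_0=0\}$ has dimension $r-m-1$, so $\overline{V}\subset\Pp^r$ is a complete intersection of dimension $r-m$ with no component at infinity.

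The main obstacle is bounding the singular locus. By the chain rule $(\partial\bfs{R}/\partial\bfs{X})(x)=(\partial\bfs{S}/\partial\bfs{Y})(\bfs{\Pi}(x))\cdot(\partial\bfs{\Pi}/\partial\bfs{X})(x)$, and since by~(1) the first factor has rank $m$ on $V$, this product can drop rank at $x\in V$ only where $(\partial\bfs{\Pi}/\partial\bfs{X})(x)$ has rank less than $s$, i.e.\ where fewer than $s$ of the coordinates of $x$ are pairwise distinct. On each such coincidence stratum the functions $\Pi_1\klk\Pi_s$ restrict to polynomials in the (at most $s-1$) distinct coordinate values, so the restriction of $V$ there is cut out by the compositions of $S_1\klk S_m$ with those polynomials: a system of the same shape as the original one, but in fewer variables, for which I would verify that the analogues of hypotheses~(1) and~(2) hold. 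An induction on $r$ then yields $\dim\operatorname{sing}(V)\le r-m-2$, and the same analysis at infinity, now using~(2), gives $\dim\operatorname{sing}(\overline{V}\cap\{X_0=0\})\le r-m-3$; hence $\operatorname{sing}(\overline{V})$ has codimension at least two in $\overline{V}$, that is, $\overline{V}$ is normal. The delicate point is the combinatorial bookkeeping of coincidence patterns together with the verification that conditions~(1) and~(2) descend to every stratum—this is the part that really uses the hypotheses and follows the lines of \cite{CaMaPr12}, \cite{CeMaPePr14}, \cite{CeMaPe15}.

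It remains to count. Now $\overline{V}\subset\Pp^r$ is a normal complete intersection of dimension $r-m$ defined over $\fq$, cut out by forms of degrees $d_i:=\deg R_i$ with $\sum_i(d_i-1)=D$ and $\deg\overline{V}\le\delta$. Writing $p_n:=q^n+q^{n-1}+\dots+q+1$, the estimates for projective complete intersections quoted in the Introduction (e.g.\ \cite{CaMaPr15}, \cite{MaPePr16}) give
\[
\big||\overline{V}(\fq)|-p_{r-m}\big|\le c_1(D,\delta)\,q^{r-m-1}+c_2(D,\delta)\,q^{r-m-2}.
\]
Applied to the complete intersection $\overline{V}\cap\{X_0=0\}\subset\Pp^{r-1}$ of dimension $r-m-1$, the same estimates bound $\big||(\overline{V}\cap\{X_0=0\})(\fq)|-p_{r-m-1}\big|$ by a quantity of order $q^{r-m-2}$. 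Since $|V(\fq)|=|\overline{V}(\fq)|-|(\overline{V}\cap\{X_0=0\})(\fq)|$ and $p_{r-m}-p_{r-m-1}=q^{r-m}$, subtracting the two estimates and regrouping the leftover terms according to the powers $q^{r-m-1}$ and $q^{r-m-2}$ gives
\[
\big||V(\fq)|-q^{r-m}\big|\le c(D,\delta)\,(q+1)\,q^{r-m-2}.
\]
A deliberately generous bound for the constants $c_1,c_2,\dots$ supplied by the cited estimates then shows $c(D,\delta)\le14\,D^3\delta^2$, which is the assertion.
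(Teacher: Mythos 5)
Your overall architecture is the right one and matches the paper's: pass to the projective closure, show it is a normal projective complete intersection with controlled behaviour at $\{X_0=0\}$, apply the explicit estimate for $\fq$--points of projective complete intersections twice (once to $\overline{V}$, once to $\overline{V}\cap\{X_0=0\}$) and subtract. The dimension count for $V$ via the finite morphism $\bfs\Pi^r=(\Pi_1,\ldots,\Pi_r)$ and the observation that the highest--degree homogeneous part of $R_i$ is $S_i^{\wt}(\Pi_1,\ldots,\Pi_s)$ also line up with what the paper does.

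However, your treatment of the singular locus takes an unnecessary detour that introduces a genuine gap. You correctly observe, via the chain rule and hypothesis (1), that $(\partial\bfs{R}/\partial\bfs{X})(x)$ can drop rank on $V$ only where $(\partial\bfs{\Pi}/\partial\bfs{X})(x)$ has rank $<s$. The paper then finishes immediately: factoring $(\partial\bfs{\Pi}^r/\partial\bfs{X})=B_r\cdot A_r$ with $A_r$ Vandermonde, every $s\times s$ minor of the truncated Jacobian is (up to sign) a Vandermonde determinant in $s$ of the $X_i$, so the rank--drop locus is exactly the set of points with at most $s-1$ pairwise--distinct coordinates, which is a finite union of linear varieties of dimension $s-1$; since $\mathrm{Sing}(V)$ lies inside it, $\dim\mathrm{Sing}(V)\le s-1\le r-m-3$, giving codimension at least $3$ with no further work. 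You instead propose to restrict the whole system to each coincidence stratum, claim that ``analogues of hypotheses (1) and (2) hold'' there, and run an induction on $r$ to get the weaker bound $\dim\mathrm{Sing}(V)\le r-m-2$. This is both more labour and less conclusive: the descent of (1)--(2) to coincidence strata is nothing like automatic (the restricted $\Pi_j$'s are no longer elementary symmetric, and the paper has to redo the Vandermonde computation carefully even in the easiest case of a single coincidence $\{X_i=X_j\}$, in Theorem \ref{theorem: nb point V_r distinct coordinates}), and in any case nothing about the restricted system is needed for the bound on $\mathrm{Sing}(V)$ itself. You should simply close off the argument at the point where you established that the rank can drop only on the union of coincidence strata.

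A second, smaller gap: you assert without argument that $\overline{V}\subset\Pp^r$ is a complete intersection. The projective closure of an affine complete intersection is not automatically cut out by the homogenisations $R_1^h,\ldots,R_m^h$ of the original generators: a priori $V(R_1^h,\ldots,R_m^h)$ could acquire components at infinity. The paper bridges this by showing (via the singular--locus bounds for both the affine part and the part at infinity, together with radicality from \cite[Theorem 18.15]{Eisenbud95}) that $V(R_1^h,\ldots,R_m^h)$ is a set--theoretic complete intersection regular in codimension $\ge 2$, hence absolutely irreducible by Theorem \ref{theorem: normal complete int implies irred}, and therefore equals $\mathrm{pcl}(V_r)$; the ideal--theoretic complete intersection statement and the degree $\prod d_i$ then follow. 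Your write--up needs this step, or some substitute for it, before you may quote the projective estimate.
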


To illustrate the applications of Theorem \ref{theorem: main result
- intro}, we shall consider two classical combinatorial problems
over finite fields. The first one is concerned with the distribution
of factorization patterns on a linear family of monic polynomials of
given degree of $\fq[T]$. If the linear family under consideration
consists of the monic polynomials of degree $n$ for which the first
$s<n-2$ coefficients are fixed, then we have a function--field
analogous to the classical conjecture on the number of primes in
short intervals, which has been the subject of several articles
(see, e.g., \cite{Pollack13}, \cite{BaBaRo15} or \cite[\S
3.5]{MuPa13}). The study of general linear families of polynomial
goes back at least to \cite{Cohen72}. Here, following the approach
of \cite{CeMaPe15}, we obtain an explicit estimate on the number of
elements on a linear family of monic polynomials of fixed degree of
$\fq[T]$ having a given factorization pattern, for which we rely on
Theorem \ref{theorem: main result - intro}.

The second problem we consider is that of estimating the average
cardinality of the value sets of families of polynomials of $\fq[T]$
with certain consecutive coefficients prescribed. The study of the
cardinality of value sets of univariate polynomials is the subject
of the seminal paper \cite{BiSD59}. In \cite{Uchiyama55b} and
\cite{Cohen73} the authors determine the average cardinality of the
value set of all monic polynomials in $\fq[T]$ of given degree,
while \cite{Uchiyama55b} and \cite{Cohen72} are concerned with the
asymptotic behavior of the average cardinality of the value set of
all polynomials of $\fq[T]$ of given degree were certain consecutive
coefficients are fixed. We shall follow the approach of
\cite{CeMaPePr14}, where the question is expressed in terms of the
number of $\fq$--rational points of certain complete intersection
defined by symmetric polynomials.

The paper is organized as follows. In Section \ref{section:
notation, notations} we briefly recall the notions and notations of
algebraic geometry we use. Section \ref{section: geometry symm
complete inters} is devoted to present our unified framework and to
establish several results on the geometry of the affine complete
intersections of interest. In Section \ref{section: estimates
complete inters} we study the behavior of these complete
intersections ``at infinity'' and prove Theorem \ref{theorem: main
result - intro}. Finally, in Sections \ref{section: distribution of
fact patterns} and \ref{section: average card value sets} we apply
Theorem \ref{theorem: main result - intro} to determine the
distribution of factorization patterns and the average cardinality
of value sets of families of univariate polynomials.
%
%
\section{Notions, notations and preliminary results}
\label{section: notation, notations}
We use standard notions and notations of commutative algebra and
algebraic geometry as can be found in, e.g., \cite{Harris92},
\cite{Kunz85} or \cite{Shafarevich94}.

Let $\K$ be any of the fields $\fq$ or $\cfq$. We denote by $\A^n$
the $n$--dimensional affine space $\cfq{\!}^{n}$ and by $\Pp^n$ the
$n$--dimensional projective space over $\cfq{\!}^{n+1}$. 
%
By a {\em projective variety defined over} $\K$ (or a projective
$\K$--variety for short) we mean a subset $V\subset \Pp^n$ of common
zeros of homogeneous polynomials $F_1,\ldots, F_m \in\K[X_0 ,\ldots,
X_n]$. Correspondingly, an {\em affine variety of $\A^n$ defined
over} $\K$ (or an affine $\K$--variety) is the set of common zeros
in $\A^n$ of polynomials $F_1,\ldots, F_{m} \in
\K[X_1,\ldots, X_{n}]$. 
We shall frequently denote by $V(F_1\klk F_m)$ or $\{F_1=0\klk
F_m=0\}$ the affine or projective $\K$--variety consisting of the
common zeros of the polynomials $F_1\klk F_m$.

In what follows, unless otherwise stated, all results referring to
varieties in general should be understood as valid for both
projective and affine varieties. A $\K$--variety $V$ is $\K$--{\em
irreducible} if it cannot be expressed as a finite union of proper
$\K$--subvarieties of $V$. Further, $V$ is {\em absolutely
irreducible} if it is $\cfq$--irreducible as a $\cfq$--variety. Any
$\K$--variety $V$ can be expressed as an irredundant union
$V=\mathcal{C}_1\cup \cdots\cup\mathcal{C}_s$ of irreducible
(absolutely irreducible) $\K$--varieties, unique up to reordering,
which are called the {\em irreducible} ({\em absolutely
irreducible}) $\K$--{\em components} of $V$.


For a $\K$--variety $V$ contained in $\Pp^n$ or $\A^n$, we denote by
$I(V)$ its {\em defining ideal}, namely the set of polynomials of
$\K[X_0,\ldots, X_n]$, or of $\K[X_1,\ldots, X_n]$, vanishing on
$V$. The {\em coordinate ring} $\K[V]$ of $V$ is defined as the
quotient ring $\K[X_0,\ldots,X_n]/I(V)$ or
$\K[X_1,\ldots,X_n]/I(V)$. The {\em dimension} $\dim V$ of $V$ is
the length $r$ of the longest chain $V_0\varsubsetneq V_1
\varsubsetneq\cdots \varsubsetneq V_r$ of nonempty irreducible
$\K$--varieties contained in $V$. We say that $V$ has {\em pure
dimension} $r$ if all the irreducible $\K$--components of $V$ are of
dimension $r$.
%
%
%
%

The {\em degree} $\deg V$ of an irreducible $\K$-variety $V$ is the
maximum number of points lying in the intersection of $V$ with a
linear space $L$ of codimension $\dim V$, for which $V\cap L$ is a
finite set. More generally, following \cite{Heintz83} (see also
\cite{Fulton84}), if $V=\mathcal{C}_1\cup\cdots\cup \mathcal{C}_s$
is the decomposition of $V$ into irreducible $\K$--components, we
define the degree of $V$ as
$$\deg V:=\sum_{i=1}^s\deg \mathcal{C}_i.$$
%
%
We shall use the following {\em B\'ezout inequality} (see
\cite{Heintz83}, \cite{Fulton84}, \cite{Vogel84}): if $V$ and $W$
are $\K$--varieties of the same ambient space, then
\begin{equation}\label{eq: Bezout}
\deg (V\cap W)\le \deg V \cdot \deg W.
\end{equation}

%
%

Let $V\subset\A^n$ be a $\K$--variety and $I(V)\subset
\K[X_1,\ldots, X_n]$ its defining ideal. Let $x$ be a point of $V$.
The {\em dimension} $\dim_xV$ {\em of} $V$ {\em at} $x$ is the
maximum of the dimensions of the irreducible $\K$--components of $V$
that contain $x$. If $I(V)=(F_1,\ldots, F_m)$, the {\em tangent
space} $T_xV$ {\em to $V$ at $x$} is the kernel of the Jacobian
matrix $(\partial F_i/\partial X_j)_{1\le i\le m,1\le j\le n}(x)$ of
the polynomials $F_1,\ldots, F_m$ with respect to $X_1,\ldots, X_n$
at $x$. 
%
%
The point $x$ is {\em regular} if $\dim T_xV=\dim_xV$. Otherwise,
the point $x$ is called {\em singular}. The set of singular points
of $V$ is the {\em singular locus} $\mathrm{Sing}(V)$ of $V$; a
variety is called {\em nonsingular} if its singular locus is empty.
For a projective variety, the concepts of tangent space, regular and
singular point can be defined by considering an affine neighborhood
of the point under consideration.
%
%

Let $V$ and $W$ be irreducible affine $\K$--varieties of the same
dimension and $f:V\to W$ a regular map for which
$\overline{f(V)}=W$, where $\overline{f(V)}$ is the closure of
$f(V)$ with respect to the Zariski topology of $W$. Such a map is
called {\em dominant}. Then $f$ induces a ring extension
$\K[W]\hookrightarrow \K[V]$ by composition with $f$. We say that
the dominant map $f$ is a {\em finite morphism} if this extension is
integral. 
We observe that the preimage $f^{-1}(S)$ of an irreducible closed
subset $S\subset W$ under a dominant finite morphism is of pure
dimension $\dim S$ (see, e.g., \cite[\S 4.2,
Proposition]{Danilov94}).

Elements $F_1,\ldots, F_{n-r}$ in $\mathbb{K}[X_1,\ldots,X_n]$ or
$\mathbb{K}[X_0,\ldots,X_n]$ form a \emph{regular sequence} if $F_1$
is nonzero and no $F_i$ is a zero divisor in the quotient ring
$\mathbb{K}[X_1,\ldots,X_n]/ (F_1,\ldots,F_{i-1})$ or
$\mathbb{K}[X_0,\ldots,X_n]/ (F_1,\ldots,F_{i-1})$ for $2\leq i \leq
n-r$. In that case, the (affine or projective) $\mathbb{K}$--variety
$V:=V(F_1,\ldots,F_{n-r})$ is called a {\em set--theoretic complete
intersection}. Furthermore, $V$ is called an (ideal--theoretic) {\em
complete intersection} if its ideal $I(V)$ over $\K$ can be
generated by $n-r$ polynomials. If $V\subset\Pp^n$ is a complete
intersection of dimension $r$ defined over $\K$, and $F_1 \klk
F_{n-r}$ is a system of homogeneous generators of $I(V)$, the
degrees $d_1\klk d_{n-r}$ depend only on $V$ and not on the system
of generators. Arranging the $d_i$ in such a way that $d_1\geq d_2
\geq \cdots \geq d_{n-r}$, we call $\boldsymbol{d}:=(d_1\klk
d_{n-r})$ the {\em multidegree} of $V$. According to the {\em
B\'ezout theorem} (see, e.g., \cite[Theorem 18.3]{Harris92}), in
such a case we have $\deg V=d_1\cdots d_{n-r}$.

In what follows we shall deal with a particular class of complete
intersections, which we now define. A $\K$--variety $V$ is {\em
regular in codimension $m$} if its singular locus $\mathrm{Sing}(V)$
has codimension at least $m+1$ in $V$, i.e., $\dim V-\dim
\mathrm{Sing}(V)\ge m+1$. A complete intersection $V$ which is
regular in codimension 1 is called {\em normal} (actually, normality
is a general notion that agrees on complete intersections with the
one we define here). A fundamental result for projective normal
complete intersections is the Hartshorne connectedness theorem (see,
e.g., \cite[Theorem VI.4.2]{Kunz85}), which we now state. If
$V\subset\Pp^n$ is a complete intersection defined over $\K$ and
$W\subset V$ is any $\K$--subvariety of codimension at least 2, then
$V\setminus W$ is connected in the Zariski topology of $\Pp^n$ over
$\K$. Applying the Hartshorne connectedness theorem with
$W:=\mathrm{Sing}(V)$, one deduces the following result.
\begin{theorem}\label{theorem: normal complete int implies irred}
If $V\subset\Pp^n$ is a normal complete intersection, then $V$ is
absolutely irreducible.
\end{theorem}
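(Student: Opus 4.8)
The plan is to invoke the Hartshorne connectedness theorem recalled above with $W:=\mathrm{Sing}(V)$, and then to upgrade the resulting connectedness to irreducibility by exploiting the nonsingularity of the complement of the singular locus. First I would reduce to the case $\K=\cfq$: whether a tuple of homogeneous polynomials of $\K[X_0\klk X_n]$ is a regular sequence, the minimal number of generators of $I(V)$, and the locus where the Jacobian matrix of a generating set drops rank are all invariant under the base extension $\K\hookrightarrow\cfq$. Hence $V$, viewed as a $\cfq$--variety, is still a normal complete intersection, and it suffices to prove that it is $\cfq$--irreducible. So assume $\K=\cfq$ from now on.

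Since $V$ is a complete intersection that is regular in codimension $1$, the subvariety $W:=\mathrm{Sing}(V)$ satisfies $\dim V-\dim W\ge 2$; in particular $W$ has codimension at least $2$ in $V$. The hypotheses of the Hartshorne connectedness theorem are thus met, and it yields that $U:=V\setminus\mathrm{Sing}(V)$ is connected in the Zariski topology of $\Pp^n$.

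It remains to show that $U$ is in fact irreducible. Every point of $U$ is a regular point of $V$, and a regular point of a variety lies on a single absolutely irreducible component (at a point common to two distinct components the local ring is not a domain, hence not regular, so the point is singular). Writing $V=\mathcal{C}_1\cup\cdots\cup\mathcal{C}_s$ for the decomposition into absolutely irreducible components, it follows that the sets $U\cap\mathcal{C}_i$ are pairwise disjoint and cover $U$; each of them is closed in $U$, hence also open in $U$ (its complement in $U$ being the union of the remaining ones), so connectedness of $U$ forces all but one of them to be empty, say $U\subseteq\mathcal{C}_1$. Finally, a complete intersection is of pure dimension, so $W=\mathrm{Sing}(V)$, having strictly smaller dimension, contains no irreducible component of $V$; therefore $U$ is dense in $V$, and $V=\overline{U}\subseteq\mathcal{C}_1$. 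Hence $V=\mathcal{C}_1$ is absolutely irreducible.

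The argument is short because essentially all of the content is packaged into the Hartshorne connectedness theorem; the only points requiring a little care are the base-change invariances used in the reduction to $\K=\cfq$ and the passage from ``connected and generically nonsingular'' to ``irreducible'', both of which are standard. I therefore do not anticipate a genuine obstacle here.
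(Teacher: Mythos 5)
Your proof is correct and follows the same route the paper takes: apply the Hartshorne connectedness theorem with $W:=\mathrm{Sing}(V)$ and then pass from connectedness of $V\setminus\mathrm{Sing}(V)$ to irreducibility of $V$. You spell out the base change to $\cfq$ and the ``connected smooth locus implies irreducible'' step more explicitly than the paper, which leaves both implicit, but the argument is the same.
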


%
\subsection{Rational points}
We denote by $\A^n(\fq)$ the $n$--dimensional $\fq$--vector space
$\fq^n$ and by $\Pp^n(\fq)$ the set of lines of the
$(n+1)$--dimensional $\fq$--vector space $\fq^{n+1}$. For a
projective variety $V\subset\Pp^n$ or an affine variety
$V\subset\A^n$, we denote by $V(\fq)$ the set of $\fq$--rational
points of $V$, namely $V(\fq):=V\cap \Pp^n(\fq)$ in the projective
case and $V(\fq):=V\cap \A^n(\fq)$ in the affine case.

For a projective variety $V$ of dimension $r$ and degree $\delta$ we
have the upper bound (see \cite[Proposition 12.1]{GhLa02} or
\cite[Proposition 3.1]{CaMa07})
\begin{equation}\label{eq: upper bound -- projective gral}
   |V(\fq)|\leq \delta p_r.
\end{equation}
On the other hand, if $V$ is an affine variety of dimension $r$ and
degree $\delta$, then (see, e.g., \cite[Lemma 2.1]{CaMa06})
\begin{equation}\label{eq: upper bound -- affine gral}
   |V(\fq)|\leq \delta q^r.
\end{equation}
%
%
\section{On the geometry of symmetric complete intersections}
\label{section: geometry symm complete inters}
Let $s,r,m$ be positive integers with $ m\leq s\leq r-m-2$. Let
$Y_1,\ldots,Y_s$ be indeterminates over $\cfq$ and let
$\fq[Y_1,\ldots,Y_s]$ be the ring of polynomials in $Y_1\klk Y_s$
and coefficients in $\fq$. We shall consider the weight $\wt$ on
$\fq[Y_1,\ldots,Y_s]$ defined by setting $\wt(Y_j):=j$ for $1\leq j
\leq s$. Let $S_1\klk S_m\in\fq[Y_1,\ldots,Y_s]$ and let $W_s\subset
\A^s$ be the affine $\fq$--variety that they define. Let
$(\partial\bfs{S}/
\partial \bfs{Y}):=(\partial S_i/
\partial Y_j)_{1\le i\le m,1\le j\le s}$ be the Jacobian matrix of
$S_{1}\klk S_{m}$ with respect to $Y_1\klk Y_s$. Assume that
$S_{1}\klk S_{m}$ satisfy the following conditions:
\begin{itemize}
  \item[($\sf H_1$)] $S_{1}\klk S_{m}$ form a regular sequence of $\fq[Y_1\klk
  Y_s]$;
  \item[($\sf H_2$)] $(\partial \bfs{S}/\partial \bfs{Y})(\bfs{y})$ has full rank $m$ for
every $\bfs{y}\in W_s$;
\item[($\sf H_3$)]
The components $S_1^{\wt}\klk S_m^{\wt}$ of highest weight of
$S_1\klk S_m$ satisfy ($\sf H_1$) and ($\sf H_2$).
\end{itemize}
A polynomial $F\in\fq[Y_1\klk Y_s]$ is called {\em weighted
homogeneous} (for the grading defined by the weight $\wt$) if all
the monomials arising in this dense representation have the same
weight. In this sense, it is clear that $S_1^{\wt}\klk S_m^{\wt}$
are weighted homogeneous.

In the next two remarks we show that polynomials $S_{1}\klk S_{m}$
as above, such that $S_{1}\klk S_{m}$ and $S_{1}^{\wt}\klk
S_{m}^{\wt}$ satisfy $(\sf H_2)$, necessarily satisfy $(\sf H_1)$
and $(\sf H_3)$. Nevertheless, as we shall rely repeatedly on the
hypotheses $(\sf H_1)$, $(\sf H_2)$ and $(\sf H_3)$, for the sake of
readability we express the conditions that the polynomials
$S_{1}\klk S_{m}$ must satisfy in this way.
\begin{remark}
If $S_1^{\wt}, \ldots, S_m^{\wt}$ satisfy $(\sf H_2)$, then
$S_1^{\wt}, \ldots, S_m^{\wt}$ form a regular sequence of
$\fq[Y_1\klk Y_s]$. Indeed, denote by $W^{\wt}\subset\A^s$ the
affine variety defined by $S_1^{\wt}, \ldots, S_m^{\wt}$ and let
$\mathcal{C}$ be an arbitrary absolutely irreducible component of
$W^{\wt}$. Then $\dim \mathcal{C}\ge s-m$. On the other hand, if
$\bfs y\in\mathcal{C}$ is a regular point of $W^{\wt}$, then the
fact that $S_1^{\wt}, \ldots, S_m^{\wt}$ satisfy $(\sf H_2)$ implies
that the tangent space $\mathcal{T}_{\bfs y}W^{\wt}$ of $W^{\wt}$ at
$\bfs y$ has dimension at most $s-m$. We conclude that
$\dim\mathcal{T}_{\bfs y}W^{\wt}=\dim\mathcal{C}=s-m$. In other
words, $W^{\wt}$ is of pure dimension $s-m$. Finally, as $S_1^{\wt},
\ldots, S_m^{\wt}$ are weighted homogeneous, the remark follows.
\end{remark}

\begin{remark}
If $S_{1}\klk S_{m}$ and $S_{1}^{\wt}\klk S_{m}^{\wt}$ satisfy $(\sf
H_2)$, then $S_{1}\klk S_{m}$ form a regular sequence of
$\fq[Y_1\klk Y_s]$. Indeed, let $S_{1}^{h_\wt}\klk S_{m}^{h_\wt}\in
\fq[Y_0,Y_1\klk Y_s]$ be the homogenizations of $S_1\klk S_m$ with
respect to the weight $\wt$. We claim that the affine variety
$V(S_1^{h_\wt}\klk S_m^{h_\wt})\subset\A^{s+1}$ is of pure dimension
$s-m+1$.

To show the claim, let $\mathcal{C}$ be an absolutely irreducible
component of $V(S_1^{h_\wt}\klk S_m^{h_\wt})$. It is clear that
$\dim \mathcal{C}\ge s-m+1$. Now let $\bfs y:=(y_0\klk y_s)\in
\mathcal{C}$ be a regular point of $V(S_1^{h_\wt}\klk S_m^{h_\wt})$.
Without loss of generality we may assume that either $y_0=1$ or
$y_0=0$. In the first case, the fact that the polynomials $S_{1}\klk
S_{m}$ satisfy $(\sf H_2)$ implies that $\dim\mathcal{T}_{\bfs
y}V(S_1^{h_\wt}\klk S_m^{h_\wt})\le s-m+1$. On the other hand, if
$y_0=0$, then $(y_1\klk y_s)\in W^{\wt}$, and since $S_{1}^{\wt}\klk
S_{m}^{\wt}$ satisfy $(\sf H_2)$, we deduce that
$\dim\mathcal{T}_{\bfs y}V(S_1^{h_\wt}\klk S_m^{h_\wt})\le s-m+1$.
In either case $\dim\mathcal{T}_{\bfs y}V(S_1^{h_\wt}\klk
S_m^{h_\wt})\le s-m+1$, which proves that $\dim\mathcal{C}=s-m+1$.
This finishes the proof of the claim.

Combining the claim with the fact that $S_1^{h_\wt}\klk S_m^{h_\wt}$
are weighted homogeneous, we conclude that $S_1^{h_\wt}\klk
S_m^{h_\wt}$ form a regular sequence. In particular,
$V(S_1^{h_\wt}\klk S_j^{h_\wt})$ is of pure dimension $s-j+1$ for
$1\le j\le m$. As the affine variety defined by the homogenization
of each element of the ideal $(S_1\klk S_j)$ with respect to the
grading defined by $\wt$ has dimension $\dim V(S_1\klk S_j)+1$ and
is contained in the affine variety defined by $S_1^{h_\wt}\klk
S_j^{h_\wt}$ for $1\le j\le m$, we have $\dim V(S_1\klk S_j)=s-j$
for $1\le j\le m$. This proves that $S_1\klk S_m$ form a regular
sequence.
\end{remark}
\begin{remark}
From ($\sf H_1$) we conclude that the variety $W_s\subset \A^s$
defined by $S_{1}\klk S_{m}$ is a set--theoretic complete
intersection of dimension $s-m$. Furthermore, by ($\sf H_2$) it
follows that the subvariety of $W_s$ defined by the set of common
zeros of the maximal minors of the Jacobian matrix $(\partial \bfs
S/\partial \bfs Y)$ has codimension at least one. Then \cite[Theorem
18.15]{Eisenbud95} proves that $S_1,\ldots,S_m$ define a radical
ideal, which implies that $W_s$ is a complete intersection.
\end{remark}

Let $X_1,\ldots,X_r$ be indeterminates over $\cfq$ and
$\fq[X_1,\ldots,X_r]$ the ring of polynomials in $X_1\klk X_r$ with
coefficients in $\fq$. Denote by $\Pi_1,\ldots,\Pi_s$ the first $s$
elementary symmetric polynomials of $\fq[X_1,\ldots,X_r]$. Let
$R_1,\ldots,R_m\in\fq[X_1,\ldots,X_r]$ be the polynomials
\begin{equation}
\label{def: polynomials in Ri} R_i:=S_i(\Pi_1,\ldots,\Pi_s) \quad
(1\leq i \leq m).
\end{equation}
Let $\deg R_i:=d_i$ for $1\le i\le m$. In what follows we shall
prove several facts concerning to the geometry of the affine
$\fq$--variety $V_r\subset\A^r$ defined by $R_1,\ldots,R_m$.

For this purpose, consider the following surjective morphism of
affine $\fq$--varieties:
\begin{align*}
{\bfs\Pi^r}: \A^r & \rightarrow  \A^r
   \\
   \bfs{x} & \mapsto  (\Pi_1(\bfs{x}),\ldots,\Pi_r(\bfs{x})).
\end{align*}
It is easy to see that  $\bfs\Pi^r$ is a  finite morphism. 
In particular, the preimage $(\bfs\Pi^r)^{-1}(Z)$ of an irreducible
affine variety $Z\subset\A^r$ of dimension  $m$ is of pure dimension
$m$.

We now consider the polynomials $S_1,\ldots,S_m$ as elements of
$\fq[Y_1,\ldots,Y_r]$, and denote $W_r:=V(S_{1}\klk
S_m)\subset\A^r$. Observe that $V_r=(\bfs\Pi^r)^{-1}(W_r)$. Since
$S_1,\ldots,S_m$  form a regular
sequence of $\fq[Y_1,\ldots,Y_r]$, 
the variety $W_r^j:=V(S_1,\ldots,S_j)\subset \A^r$ has pure
dimension $r-j$ for $1\leq j \leq m$. This implies that the variety
$V_r^j:=(\bfs\Pi^r)^{-1}(W_r^j)$ defined by $R_1,\ldots,R_j$ has
pure dimension $r-j$ for $1\leq j \leq m$. Hence, the polynomials
$R_{1}\klk R_{m}$ form a regular sequence of $\fq[X_1\klk X_r]$ and
we have the following result.
\begin{lemma}\label{lemma: V_r is complete inters}
Let $V_r\subset \A^r$ be the affine $\fq$--variety defined by
$R_{1}\klk R_{m}$. Then $V_r$ is a set--theoretic complete
intersection of dimension $r-m$.
\end{lemma}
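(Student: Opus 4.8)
The plan is to reduce the statement to the hypothesis ($\sf H_1$) by means of the finite morphism $\bfs\Pi^r\colon\A^r\to\A^r$, $\bfs x\mapsto(\Pi_1(\bfs x)\klk\Pi_r(\bfs x))$, exploiting that, by the definition \eqref{def: polynomials in Ri} of the polynomials $R_i$, the variety $V_r^j:=V(R_1\klk R_j)$ is nothing but the preimage under $\bfs\Pi^r$ of $W_r^j:=V(S_1\klk S_j)\subset\A^r$, where here $S_1\klk S_m$ are regarded as elements of $\fq[Y_1\klk Y_r]$.

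First I would check that $S_1\klk S_m$, viewed in the larger ring $\fq[Y_1\klk Y_r]$, still form a regular sequence: since $\fq[Y_1\klk Y_r]$ is free --- hence faithfully flat --- as a module over $\fq[Y_1\klk Y_s]$, regularity of the sequence is inherited from ($\sf H_1$). Consequently each $W_r^j$ is a set--theoretic complete intersection of pure dimension $r-j$ ($1\le j\le m$). Next I would record that $\bfs\Pi^r$ is a surjective finite morphism, because each coordinate $X_i$ is a root of the monic polynomial $T^r-\Pi_1T^{r-1}+\cdots+(-1)^r\Pi_r$ over $\fq[\Pi_1\klk\Pi_r]$ and is therefore integral over it. Writing $W_r^j$ as the union of its irreducible components, all of dimension $r-j$, and applying to each of them the fact recalled in Section~\ref{section: notation, notations} that the preimage of an irreducible closed set under a dominant finite morphism is of pure dimension equal to that of the set, I would conclude that $V_r^j=(\bfs\Pi^r)^{-1}(W_r^j)$ has pure dimension $r-j$ for $1\le j\le m$; in particular $\dim V_r=\dim V_r^m=r-m$.

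Finally, to pass from the dimension count to a set--theoretic complete intersection, I would use that $\fq[X_1\klk X_r]$ is Cohen--Macaulay: the ideal $(R_1\klk R_m)$ then has height $m$ (since $\dim V_r=r-m$), which forces $R_1\klk R_m$ to be a regular sequence, and this is precisely what ``$V_r$ is a set--theoretic complete intersection of dimension $r-m$'' means. The only point needing some care is that the argument must control the dimension of every intermediate variety $V_r^j$, not merely of $V_r$ itself --- hence the induction on $j$ is carried through both the faithfully flat extension and the morphism $\bfs\Pi^r$; once that is in place, the finiteness of $\bfs\Pi^r$ and the Cohen--Macaulay (unmixedness) step are routine.
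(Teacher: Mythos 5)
Your proposal is correct and follows essentially the same route as the paper: view $S_1\klk S_m$ inside $\fq[Y_1\klk Y_r]$, observe $V_r^j=(\bfs\Pi^r)^{-1}(W_r^j)$, use finiteness of $\bfs\Pi^r$ to transfer the pure--dimension statements to $V_r^j$, and deduce that $R_1\klk R_m$ form a regular sequence. The only difference is that you spell out the auxiliary facts the paper takes for granted (faithful flatness for preserving regularity in the larger ring, integrality of $X_i$ over $\fq[\Pi_1\klk\Pi_r]$ to show $\bfs\Pi^r$ is finite, and the Cohen--Macaulay/unmixedness step to pass from the dimension count to a regular sequence), so this is a more detailed rendering of the same argument rather than a new one.
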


Next we study the singular locus of $V_r$. To do this, we consider
the following morphism of $\fq$--varieties:
 \begin{align*}
\bfs \Pi : V_r & \rightarrow  W_s
   \\
   \bfs{x} & \mapsto  (\Pi_1(\bfs{x}),\ldots,\Pi_s(\bfs{x})).
 \end{align*}
For $\bfs{x}\in V_r$ and $\bfs{y}:=\bfs\Pi(\bfs{x})$, we denote by
$\mathcal{T}_{\bfs{x}}V_r$ and $\mathcal{T}_{\bfs{y}} W_s$ the
tangent spaces to $V_r$ at $\bfs{x}$ and to $W_s$ at $\bfs{y}$
respectively. We also consider the differential map of $\bfs\Pi$ at
$\bfs{x}$, namely
 \begin{align*}
   \mathrm{d}_{\bfs{x}}\bfs\Pi :  \mathcal{T}_{\bfs{x}}V_r &
   \rightarrow  \mathcal{T}_{\bfs{y}}W_s \\
   \bfs{v} & \mapsto  A(\bfs{x})\cdot \bfs{v},
 \end{align*}
where $A(\bfs{x})$ is the following  $(s\times r)$--matrix:
\begin{equation} \label{eq: Jacobian Pi}
    A(\bfs{x}):=\left(\frac{\partial\bfs\Pi}{\partial
    \bfs{X}}\right)(\bfs{x}):=\left(\frac{\partial \Pi_i}{\partial
    X_j}(\bfs{x})\right)_{1\leq i\leq s,\,  1 \leq j\leq r}.
\end{equation}

The main result of this section is an upper bound on the dimension
of the singular locus of $V_r$. To prove such a bound, we make some
remarks concerning the Jacobian matrix of the elementary symmetric
polynomials. It is well--known that the partial derivatives of the
elementary symmetric polynomials $\Pi_i$ satisfy the following
identities (see, e.g., \cite{LaPr02}) for $1\leq i,j \leq r$:
$$
    \frac{\partial \Pi_i}{\partial X_{j}}= \Pi_{i-1}-X_{j} \Pi_{i-2}
    + X_{j}^2 \Pi_{i-3} +\cdots+ (-1)^{i-1} X_{j}^{i-1}.
$$
As a consequence, if $A_r$ denotes the $(r\times r)$--Vandermonde
matrix
$$
    A_r:=(X_j^{i-1})_{1\leq i,j\leq r},
$$
then the Jacobian matrix $(\partial\bfs\Pi^r/\partial{\bfs X})$ of
$\bfs\Pi^r:=(\Pi_1,\dots,\Pi_r)$ with respect to $X_1,\dots,X_r$ can
be factored as follows:
\begin{equation} \label{eq: factorization Jacobian elem sim pols}
     \left(\frac{\partial {\bfs\Pi^r}}{\partial \bfs X}\right):=B_r\cdot
      A_r
       :=
        \left(
         \begin{array}{ccccc}
           1 & \ 0 & 0 &  \dots & 0
         \\
           \Pi_1 & - 1 & 0 &  &
          \\
           \Pi_2 & -\Pi_1 & 1 & \ddots & \vdots
          \\
           \vdots &\vdots  & \vdots & \ddots & 0
           \\
           \Pi_{r-1} & -\Pi_{r-2} & \Pi_{r-3} & \cdots &\!\! (-1)^{r-1}
         \end{array}
       \!\!\right)
     \cdot
         A_r.
  \end{equation}
Observe that $B_r$ is a square, lower--triangular matrix whose
determinant is equal to $(-1)^{{(r-1)r}/{2}}$. This implies that the
determinant of $(\partial{\bfs\Pi^r}/\partial{\bfs X})$ is equal, up
to a sign, to the determinant of $A_r$, namely,
$$\det \left(\frac{\partial {\bfs\Pi^r}}{\partial \bfs X}\right)
=(-1)^{{(r-1)r}/{2}} \prod_{1\le i < j\le r}(X_j-X_i).$$ Denote by
$(\partial \bfs{R}/\partial \bfs{X}):=(\partial R_i/\partial
X_j)_{1\le i\le m,1\leq j \le r}$ the Jacobian matrix of $R_{1}\klk
R_{m}$ with respect to  $X_1\klk X_r$. The following result
generalizes \cite[Theorem 3.2]{CeMaPePr14}.
\begin{theorem}\label{theorem: dimension singular locus V_r}
The set of points $\bfs{x}\in V_r$ for which $(\partial
\bfs{R}/\partial \bfs{X})(\bfs{x})$ does not have full rank, has
dimension at most $s-1$. In particular, the singular locus
$\Sigma_r$ of $V_r$ has dimension at most $s-1$.
\end{theorem}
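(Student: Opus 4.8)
The plan is to analyze the failure of full rank of $(\partial\bfs{R}/\partial\bfs{X})$ via the factorization already available for the elementary symmetric polynomials. Using the chain rule, the Jacobian matrix of $\bfs{R}=(R_1,\dots,R_m)$ factors as
\begin{equation*}
\left(\frac{\partial\bfs{R}}{\partial\bfs{X}}\right)(\bfs{x})=\left(\frac{\partial\bfs{S}}{\partial\bfs{Y}}\right)(\bfs{y})\cdot A(\bfs{x}),
\end{equation*}
where $\bfs{y}=\bfs\Pi(\bfs{x})\in W_s$ and $A(\bfs{x})$ is the $(s\times r)$--matrix from \eqref{eq: Jacobian Pi}, which is the submatrix consisting of the first $s$ rows of $(\partial\bfs\Pi^r/\partial\bfs{X})(\bfs{x})=B_r(\bfs{x})\cdot A_r(\bfs{x})$. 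The key observation is that hypothesis $(\sf H_2)$ guarantees $(\partial\bfs{S}/\partial\bfs{Y})(\bfs{y})$ has full rank $m$ for every $\bfs{y}\in W_s$, so the only way $(\partial\bfs{R}/\partial\bfs{X})(\bfs{x})$ can fail to have full rank is for $A(\bfs{x})$ to fail to have full rank $s$, i.e., for the first $s$ rows of $B_r(\bfs{x})\cdot A_r(\bfs{x})$ to be linearly dependent.

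First I would isolate exactly which points $\bfs{x}\in\A^r$ force $A(\bfs{x})$ to have rank less than $s$. Since $B_r$ is lower triangular with nonzero determinant, the first $s$ rows of $B_r\cdot A_r$ span the same space as... one must be careful here: $B_r$ is $r\times r$, so the first $s$ rows of $B_r\cdot A_r$ equal (first $s$ rows of $B_r$)$\cdot A_r$, and the first $s$ rows of $B_r$ form an $s\times r$ matrix of rank $s$ (it contains the $s\times s$ lower-triangular block with unit-modulus diagonal). Hence $\rank A(\bfs{x})=\rank\big((\text{first }s\text{ rows of }B_r)\cdot A_r(\bfs{x})\big)$, and since the left factor has full row rank $s$, this drops below $s$ precisely when the column space of $A_r(\bfs{x})$, intersected appropriately, forces a deficiency; concretely, $A(\bfs{x})$ has rank $<s$ if and only if the $r$ points $(1,x_j,x_j^2,\dots,x_j^{s-1})$, $j=1,\dots,r$ (the columns of the truncated Vandermonde), span a space of dimension $<s$, which happens exactly when the coordinates $x_1,\dots,x_r$ take at most $s-1$ distinct values.

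The next step is to bound the dimension of the locus where this happens. Let $\Sigma'\subset V_r$ be the set of $\bfs{x}\in V_r$ with at most $s-1$ distinct coordinates. I would stratify $\Sigma'$ by the partition type of the coordinates: for each way of grouping $\{1,\dots,r\}$ into at most $s-1$ blocks, the corresponding stratum is the image, under a linear coordinate identification, of $V_r$ restricted to an affine subspace of dimension at most $s-1$ in the "distinct values" parameters. More precisely, such a stratum is isomorphic to the $\fq$--variety defined by $R_1,\dots,R_m$ in the $\le s-1$ free variables $Z_1,\dots,Z_{s-1}$ obtained by setting equal coordinates equal; but $R_i$ restricted there is $S_i$ evaluated at the elementary symmetric polynomials of a multiset supported on $\le s-1$ values, so this stratum has dimension at most $s-1$ trivially as a subvariety of $\A^{s-1}$. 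Taking the union over the finitely many partition types of $\{1,\dots,r\}$ into at most $s-1$ nonempty blocks, we conclude $\dim\Sigma'\le s-1$. Combining with the factorization argument, the set of $\bfs{x}\in V_r$ where $(\partial\bfs{R}/\partial\bfs{X})(\bfs{x})$ fails to have full rank is contained in $\Sigma'$, hence has dimension at most $s-1$. Finally, since $V_r$ is a set--theoretic complete intersection of dimension $r-m$ defined by $R_1,\dots,R_m$ (Lemma \ref{lemma: V_r is complete inters}), a point $\bfs{x}\in V_r$ where the Jacobian has full rank $m$ is a regular point, so $\Sigma_r=\mathrm{Sing}(V_r)\subset\Sigma'$, giving $\dim\Sigma_r\le s-1$.

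**Main obstacle.** The delicate point is the rank analysis of $A(\bfs{x})$: one must argue cleanly that $\rank A(\bfs{x})<s$ is equivalent to the coordinates $x_1,\dots,x_r$ assuming at most $s-1$ distinct values, handling the truncated (non-square) Vandermonde $(x_j^{i-1})_{1\le i\le s,\,1\le j\le r}$ correctly — a full-rank $s\times r$ matrix with $r\ge s$ has rank $<s$ iff every $s\times s$ minor vanishes, and these minors are (up to sign) products of Vandermonde determinants on $s$-subsets of the columns scaled by Schur-type factors, so they all vanish iff no $s$ of the $x_j$ are pairwise distinct. Getting this characterization precise, together with bounding the dimension of each coordinate-coincidence stratum uniformly, is the crux; the rest is the routine chain-rule factorization and an appeal to Lemma \ref{lemma: V_r is complete inters}.
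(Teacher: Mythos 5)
Your proof follows essentially the same route as the paper's: chain rule to reduce full--rankness of $(\partial\bfs R/\partial\bfs X)(\bfs x)$ to full--rankness of $A(\bfs x)$ via $(\sf H_2)$, then identification of the degeneracy locus of $A(\bfs x)$ with the locus of points having at most $s-1$ pairwise--distinct coordinates, then bounding that locus by the ambient dimension of the finitely many coordinate--coincidence linear strata, and finally the observation that singular points of the set--theoretic complete intersection $V_r$ must have rank--deficient Jacobian. The one place you flag as the crux — the rank analysis of $A(\bfs x)$ — is exactly where the paper is tighter: rather than writing $A(\bfs x)$ as $(\text{first $s$ rows of }B_r)\cdot A_r(\bfs x)$ and reasoning about the full $r\times r$ Vandermonde, the paper notes that the last $r-s$ columns of the top $s\times r$ block of $B_r$ vanish, so $A(\bfs x)=B_s(\bfs x)\cdot(x_j^{i-1})_{1\le i\le s,\,1\le j\le r}$ with $B_s$ a lower--triangular unipotent (up to sign) $s\times s$ matrix. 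Thus each $s\times s$ minor of $A(\bfs x)$ is, up to a sign $(-1)^{s(s-1)/2}$ — not a ``Schur--type factor'' — precisely a Vandermonde determinant $\prod_{m<n}(x_{l_n}-x_{l_m})$, and the simultaneous vanishing of all these immediately forces at most $s-1$ distinct coordinates. Your conclusion is correct and the argument is the same in substance; only this reduction needs to be made explicit to close the step you identify as delicate.
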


\begin{proof}
By the chain rule, the partial derivatives of $R_i$ satisfy the
following equality:
$$
\left(\frac{\partial \bfs{R}}{\partial
\bfs{X}}\right)=\left(\frac{\partial \bfs{S}}{\partial
\bfs{Y}}\circ\bfs\Pi\right)\cdot
\left(\frac{\partial\bfs\Pi}{\partial \bfs{X}}\right).
$$
Fix an arbitrary point $\bfs{x}\in V_r$ for which $(\partial
\bfs{R}/\partial \bfs{X})(\bfs{x})$ does not have full rank. Let
$\bfs{v}\in\A^{m}$ be a nonzero vector in the left kernel of
$(\partial \bfs{R}/\partial \bfs{X})(\bfs{x})$. Thus
$$\bfs{0}=
\bfs{v}\cdot \left(\frac{\partial \bfs{R}}{\partial
\bfs{X}}\right)(\bfs{x})=\bfs{v}\cdot\left(\frac{\partial
\bfs{S}}{\partial \bfs{Y}}\right)\big(\bfs\Pi(\bfs{x})\big)\cdot
A(\bfs{x}),$$
where $A(\bfs{x})$ is the matrix defined in (\ref{eq: Jacobian Pi}).
Since by hypothesis ($\sf H_2$) the Jacobian matrix $(\partial
\bfs{S}/\partial \bfs{Y})\big(\bfs\Pi(\bfs{x})\big)$ has full rank,
we deduce that $\bfs{w}:=\bfs{v}\cdot\left({\partial
\bfs{S}}/{\partial \bfs{Y}}\right)\big(\Pi(\bfs{x})\big)\in\A^s$ is
a nonzero vector with $\bfs{w}\cdot A(\bfs{x})= \bfs{0}$. Hence, all
the maximal minors of  $A(\bfs{x})$ must be zero.

Observe that $A(\bfs{x})$ is the $(s\times r)$--submatrix of
$(\partial{\bfs\Pi^r}/\partial{\bfs X})(\bfs x)$ which consists of
the first $s$ rows of $(\partial\bfs \Pi^r/\partial{\bfs X})(\bfs
x)$. Therefore, according to (\ref{eq: factorization Jacobian elem
sim pols}) we conclude that
$$
A(\bfs{x})=B_{s,r}(\bfs{x})\cdot A_r(\bfs{x}),
$$
where $B_{s,r}(\bfs{x})$ is the $(s\times r)$--submatrix of
$B_r(\bfs{x})$  consisting of the first $s$ rows of $B_r(\bfs{x})$.
Since the last $r-s$ columns of $B_{s,r}(\bfs{x})$ are zero, we may
rewrite this identity as follows:
\begin{equation}\label{eq: factorization submatrix Jacobian elem symm}
A(\bfs{x})=B_s(\bfs{x})\cdot (x_j^{i-1})_{1\leq i \leq s, \, 1\leq j\leq r},
\end{equation}
where $B_s(\bfs{x})$ is the $(s \times s)$--submatrix of $B_r(\bfs{x})$ consisting of the first
 $s$ rows and the first $s$
columns of $B_r(\bfs{x})$.

Fix $1\leq l_1<\cdots<l_s\leq r$, set $I:=(l_1,\ldots,l_s)$ and consider
the $(s\times s)$--submatrix $M_I(\bfs{x})$ of $A(\bfs{x})$ consisting of the columns
 $l_1,\ldots,l_s$ of $A(\bfs{x})$, namely
$M_I(\bfs{x}):=({\partial \Pi_i}/\partial X_{l_{j}})_{1\leq i,j\leq
s}(\bfs{x})$. From (\ref{eq: factorization Jacobian elem sim pols})
and (\ref{eq: factorization submatrix Jacobian elem symm}) we deduce
that $M_I(\bfs{x})=B_s(\bfs{x})\cdot A_{s,I}(\bfs{x})$, where
$A_{s,I}(\bfs{x})$ is the Vandermonde matrix
$A_{s,I}(\bfs{x}):=(x_{l_{j}}^{i-1})_{1\leq i,j\leq s}$. As a
consequence,
\begin{equation}\label{eq: Vandermonde determinant}
\det\big(M_I(\bfs{x})\big) = (-1)^{\frac{(s-1)s}{2}}\det
A_{s,I}(\bfs{x}) = (-1)^{\frac{(s-1)s}{2}}\!\!\prod_{1\le m<n\le
s}\!\!(x_{l_{n}}-x_{l_{m}})=0.
\end{equation}

Since (\ref{eq: Vandermonde determinant}) holds for every
$I:=(l_1,\ldots,l_s)$ as above, we conclude that $\bfs{x}$ has at
least $s-1$ pairwise--distinct coordinates. In particular, the set
of points $\bfs{x}\in V_r$ for which $\mathrm{rank}(\partial
\bfs{R}/\partial \bfs{X})(\bfs{x})< m$ is contained in a finite
union of linear varieties of $\A^r$ of dimension $s-1$, and thus is
an affine variety of dimension at most $s-1$.

Now let $\bfs{x}$ be an arbitrary point of $\Sigma_r$. By Lemma
\ref{lemma: V_r is complete inters} we have that
 $\dim
\mathcal{T}_{\bfs{x}}V_r>r-m$. Thus, the rank of
 $\left({\partial \bfs{R}}/{\partial
\bfs{X}}\right)(\bfs{x})$ is less than $m$, for otherwise we would
have $\dim \mathcal{T}_{\bfs{x}}V_r\le r-m$, contradicting thus the
fact that $\bfs{x}$ is a singular point of $V_r$. This finishes the
proof of the theorem.
\end{proof}

From the proof of Theorem \ref{theorem: dimension singular locus
V_r} we conclude that the singular locus of $V_r$ is included in a
simple variety of ``low'' dimension.
\begin{remark}\label{remark: sigma_r included union linear vars}
Let notations and assumptions be as in Theorem \ref{theorem:
dimension singular locus V_r}. From the proof of Theorem
\ref{theorem: dimension singular locus V_r} we obtain the following
inclusion:
$$
\Sigma_r\subset \bigcup_{\mathcal{I}}\mathcal{L_{I}},
$$
where $\mathcal{I}:=\{I_1,\ldots,I_{s-1}\}$ runs over all the
partitions of $\{1,\ldots,r\}$ into $s-1$ nonempty subsets
$I_j\subset \{1,\ldots,r\}$ and $\mathcal{L_{I}}:=
\mathrm{span}(\bfs{v}^{I_1},\ldots,\bfs{v}^{I_{s-1}})$ is the linear
variety spanned by the vectors $\bfs{v}^{I_j}:=
(v_1^{I_j},\ldots,v_{r}^{I _j})$ defined by $v_m^{I_j}:=1$ for $m\in
I_j$ and $v_m^{I_j}:=0$ for $m\notin I_j$.
\end{remark}

From Lemma \ref{lemma: V_r is complete inters} and  Theorem
\ref{theorem: dimension singular locus V_r} we obtain further
consequences concerning the polynomials $R_i$ and the variety $V_r$.
According to Theorem \ref{theorem: dimension singular locus V_r},
the set of points $\bfs{x}\in V_r$ for which the matrix $(\partial
\bfs{R}/\partial \bfs{X})(\bfs{x})$ does not have full rank, has
dimension at most $s-1$. Since $R_{1}\klk R_{m}$ form a regular
regular sequence and $s\leq r-m-2$, \cite[Theorem 18.15]{Eisenbud95}
shows that $R_{1}\klk R_{m}$ define a radical ideal of $\fq[X_1\klk
X_r]$, and thus $V_r$ is a complete intersection. Finally, the
B\'ezout inequality (\ref{eq: Bezout}) implies $\deg V_r\le
\prod_{i=1}^{m}d_i$. In other words, we have the following
statement.
\begin{corollary}\label{coro: radicality and degree Vr}
The polynomials $R_{1}\klk R_{m}$ define a radical ideal and the variety
$V_r$ is a complete intersection of degree at most $\deg V_r\le\prod_{i=1}^{m}d_i$.
\end{corollary}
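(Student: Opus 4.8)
The plan is to observe that the substance of the statement has already been extracted in Lemma~\ref{lemma: V_r is complete inters} and Theorem~\ref{theorem: dimension singular locus V_r}, so that the proof is a matter of assembling these two facts together with a standard radicality criterion for complete intersections and the B\'ezout inequality.

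First I would recall from Lemma~\ref{lemma: V_r is complete inters} that $R_1,\ldots,R_m$ form a regular sequence in $\fq[X_1,\ldots,X_r]$ and that $V_r=V(R_1,\ldots,R_m)$ is a set--theoretic complete intersection of pure dimension $r-m$; in particular the coordinate ring $\fq[X_1,\ldots,X_r]/(R_1,\ldots,R_m)$ is Cohen--Macaulay, hence satisfies Serre's condition $(\sf S_1)$. Next I would invoke Theorem~\ref{theorem: dimension singular locus V_r}: the subvariety of $V_r$ on which the Jacobian matrix $(\partial\bfs{R}/\partial\bfs{X})$ fails to have full rank $m$ has dimension at most $s-1$. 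Since the standing hypothesis is $s\le r-m-2$, this locus has dimension at most $r-m-2<r-m=\dim V_r$, so it has codimension at least one (indeed at least two) in $V_r$; equivalently, the ring $\fq[X_1,\ldots,X_r]/(R_1,\ldots,R_m)$ is regular in codimension $0$, i.e. satisfies $(\sf R_0)$. Therefore \cite[Theorem 18.15]{Eisenbud95} applies and shows that the ideal $(R_1,\ldots,R_m)$ is radical; since it is generated by $m=r-\dim V_r$ polynomials, this means precisely that $V_r$ is an ideal--theoretic complete intersection, with $I(V_r)=(R_1,\ldots,R_m)$.

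For the degree estimate I would apply the B\'ezout inequality $(\ref{eq: Bezout})$ iteratively to the hypersurfaces $V(R_i)$, obtaining
\[
\deg V_r=\deg V(R_1,\ldots,R_m)\le \prod_{i=1}^{m}\deg R_i=\prod_{i=1}^{m}d_i,
\]
which is the claimed bound. The only step requiring a little attention is verifying that the non--smooth locus genuinely has positive codimension in $V_r$, as demanded by the radicality criterion; this is exactly the point at which the sharper hypothesis $s\le r-m-2$ (rather than merely $s\le r-m$) is used, and everything else is routine bookkeeping.
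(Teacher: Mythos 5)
Your proposal is correct and follows essentially the same route as the paper: invoke Lemma~\ref{lemma: V_r is complete inters} and Theorem~\ref{theorem: dimension singular locus V_r}, apply \cite[Theorem 18.15]{Eisenbud95} to get radicality (hence an ideal--theoretic complete intersection), and use the B\'ezout inequality \eqref{eq: Bezout} for the degree bound. The only difference is cosmetic: you unpack the application of the Eisenbud theorem via Serre's conditions $(\sf R_0)$ and $(\sf S_1)$, whereas the paper leaves that implicit; note also that under $s\le r-m-2$ the non--full--rank locus actually has codimension at least three in $V_r$ (its dimension is at most $s-1\le r-m-3$), which is more than the codimension one you record, though codimension one is all that radicality requires here.
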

%
%
\section{The number of rational points of symmetric complete intersections}
\label{section: estimates complete inters}
The results of the previous section on the geometry of the affine
$\fq$--variety $V_r\subset\A^r$, which is defined by the symmetric
polynomials $R_1,\ldots,R_m\in\fq[X_1,\ldots,X_r]$ of \eqref{def:
polynomials in Ri}, form the basis of our approach to estimate the
number of $\fq$--rational points of $V_r$. As we shall rely on
estimates for projective complete intersections defined over $\fq$,
we shall also need information on the behavior of $V_r$ ``at
infinity''.
%
%
\subsection{The geometry of the projective closure}
Consider the embedding of $\A^r$ into the projective space $\Pp^r$
which assigns to any $\bfs{x}:=(x_1,\ldots, x_r)\in\A^r$ the point
$(1:x_1:\dots:x_r)\in\Pp^r$. Then the closure
$\mathrm{pcl}(V_r)\subset\Pp^r$ of the image of $V_r$ under this
embedding in the Zariski topology of $\Pp^r$ is called the
projective closure of $V_r$. The points of $\mathrm{pcl}(V_r)$ lying
in the hyperplane $\{X_0=0\}$ are called the points of
$\mathrm{pcl}(V_r)$ at infinity.

It is well--known that $\mathrm{pcl} (V_r)$ is the $\fq$--variety of
$\mathbb{P}^r$ defined by the homogenization
$F^h\in\fq[X_0,\ldots,X_r]$ of each polynomial $F$ belonging to the
ideal $(R_{1}\klk R_{m})\subset\fq[X_1,\ldots,X_r]$ (see, e.g.,
\cite[\S I.5, Exercise 6]{Kunz85}). Denote by $(R_{1}\klk R_{m})^h$
the ideal generated by all the polynomials $F^h$ with $F\in
(R_{1}\klk R_{m})$. Since $(R_{1}\klk R_{m})$ is radical it turns
out that $(R_{1}\klk R_{m})^h$ is also a radical ideal (see, e.g.,
\cite[\S I.5, Exercise 6]{Kunz85}). Furthermore, $\mathrm{pcl}
(V_r)$ has pure dimension $r-m$ (see, e.g., \cite[Propositions
I.5.17 and II.4.1]{Kunz85}) and degree equal to $\deg V$ (see, e.g.,
\cite[Proposition 1.11]{CaGaHe91}).

Now we discuss the behavior of $\mathrm{pcl} (V_r)$ at infinity.
Consider the decomposition of each polynomial $R_i$ into its
homogeneous components, namely
$$R_i=R_i^{d_i}+R_i^{d_i-1}+\cdots+R_i^{0},$$
where each $R_i^j\in\fq[X_1\klk X_r]$ is homogeneous of degree $j$
or zero, $R_i^{d_i}$ being nonzero for $1\le j\le m$. Hence, the
homogenization of each $R_i$ is the polynomial
\begin{equation}\label{eq: Ri homogenization}
R_i^h=R_i^{d_i}+R_i^{d_i-1}X_0+\cdots+R_i^{0}X_0^{d_i}.
\end{equation}
It follows that $R_i^h(0,X_1\klk X_r)=R_i^{d_i}$ for $1\leq i \leq
m$. 
Next we relate each $R_i^{d_i}$ with the component $S_i^{\wt}$ of
highest weight of $S_i$. Indeed, let $a_{i_1\klk i_s}Y_1^{i_1}\cdots
Y_s^{i_s}$ be an arbitrary monomial arising in the dense
representation of $S_i$. Then its weight $\wt(a_{i_1\klk
i_s}Y_1^{i_1}\cdots Y_s^{i_s})=\sum_{j=1}^s j \cdot i_j$ equals the
degree of the corresponding monomial $a_{i_1\klk i_s}\Pi_1^{i_1}
\cdots \Pi_s^{i_s}$ of $R_i$. Hence, we easily deduce the following
result.
\begin{lemma}\label{lemma: rel between R^di and S^wt}
Let $R_i^{d_i}$ be the homogeneous component of highest degree of
$R_i$ and $S_i^{\wt}$ the component of highest weight of $S_i$. Then
$R_i^{d_i}=S_i^{\wt}(\Pi_1,\ldots,\Pi_s)$ for $1\leq i \leq m$.
\end{lemma}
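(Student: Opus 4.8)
The plan is to compare, monomial by monomial, the homogeneous-degree grading on $\fq[X_1\klk X_r]$ induced by the $R_i$ with the weight grading on $\fq[Y_1\klk Y_s]$ that defines $S_i^{\wt}$, exploiting the substitution $Y_j\mapsto \Pi_j$. Write $S_i=\sum_{(i_1\klk i_s)} a_{i_1\klk i_s}\,Y_1^{i_1}\cdots Y_s^{i_s}$ in dense representation. The key observation, already isolated in the paragraph preceding the statement, is the numerical identity
$$
\wt\big(a_{i_1\klk i_s}\,Y_1^{i_1}\cdots Y_s^{i_s}\big)=\sum_{j=1}^s j\cdot i_j=\deg\big(a_{i_1\klk i_s}\,\Pi_1^{i_1}\cdots\Pi_s^{i_s}\big),
$$
which holds because $\Pi_j$ is homogeneous of degree $j$ in $\fq[X_1\klk X_r]$, so substituting $Y_j\mapsto\Pi_j$ sends a weight-$w$ monomial in the $Y$'s to a degree-$w$ homogeneous polynomial in the $X$'s.

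First I would set $w_i:=\wt(S_i)$, the maximal weight occurring in $S_i$, so that $S_i^{\wt}=\sum_{\sum j i_j=w_i} a_{i_1\klk i_s}\,Y_1^{i_1}\cdots Y_s^{i_s}$. Applying the substitution to $S_i$ and collecting terms by degree, the above identity shows that the degree-$t$ homogeneous component of $R_i=S_i(\Pi_1\klk\Pi_s)$ is exactly $\sum_{\sum j i_j=t} a_{i_1\klk i_s}\,\Pi_1^{i_1}\cdots\Pi_s^{i_s}$, i.e. the image under $Y_j\mapsto\Pi_j$ of the weight-$t$ part of $S_i$. In particular no monomial of $S_i$ of weight strictly less than $w_i$ can contribute to degree $w_i$, and $S_i^{\wt}(\Pi_1\klk\Pi_s)$ is itself homogeneous of degree $w_i$ (being a sum of such monomials). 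Hence $w_i=d_i=\deg R_i$ and $R_i^{d_i}=S_i^{\wt}(\Pi_1\klk\Pi_s)$, provided one checks this top component is nonzero.

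The one point needing a word of care is precisely that nonvanishing: substituting $Y_j\mapsto\Pi_j$ could in principle produce cancellation among the weight-$w_i$ monomials of $S_i$, which would make $d_i<w_i$. This does not happen because the elementary symmetric polynomials $\Pi_1\klk\Pi_s$ are algebraically independent over $\fq$ (as $s\le r$), so the $\fq$-algebra homomorphism $\fq[Y_1\klk Y_s]\to\fq[X_1\klk X_r]$, $Y_j\mapsto\Pi_j$, is injective; since $S_i^{\wt}\neq 0$ by definition, its image $S_i^{\wt}(\Pi_1\klk\Pi_s)$ is nonzero. This is the only genuine obstacle, and it is mild; the rest is the bookkeeping identity between weight and degree already displayed above. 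I would therefore present the argument as: (i) record the weight/degree identity on monomials; (ii) conclude that the $Y_j\mapsto\Pi_j$ substitution carries the weight grading to the degree grading, so $R_i^{d_i}$ is the image of the top-weight part $S_i^{\wt}$; (iii) invoke algebraic independence of $\Pi_1\klk\Pi_s$ to guarantee this image is nonzero, so that $d_i$ indeed equals $\wt(S_i)$ and the claimed equality $R_i^{d_i}=S_i^{\wt}(\Pi_1\klk\Pi_s)$ holds for $1\le i\le m$.
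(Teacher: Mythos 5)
Your proof is correct and follows the paper's route: record that the weight of a monomial $a_{i_1\klk i_s}Y_1^{i_1}\cdots Y_s^{i_s}$ of $S_i$ equals the degree of the corresponding monomial $a_{i_1\klk i_s}\Pi_1^{i_1}\cdots\Pi_s^{i_s}$ of $R_i$, so that the substitution $Y_j\mapsto\Pi_j$ carries the weight grading to the degree grading and the top--degree part of $R_i$ is the image of the top--weight part of $S_i$. The one thing you add --- invoking algebraic independence of $\Pi_1\klk\Pi_s$ to rule out cancellation, so that $d_i=\wt(S_i)$ and $S_i^{\wt}(\Pi_1\klk\Pi_s)\neq 0$ --- is precisely the point the paper leaves implicit, and you are right that the equality would fail without it.
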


Let $\Sigma_{r}^{\infty}\subset\mathbb{P}^r$ be the singular locus
of $\mathrm{pcl}(V_r)$ at infinity, namely the set of singular
points of $\mathrm{pcl}(V_r)$ lying in the hyperplane $\{X_0=0\}$.
From Lemma \ref{lemma: rel between R^di and S^wt} we obtain critical
information about $\Sigma_{r}^{\infty}$.
\begin{lemma}\label{lemma: dim singular locus Vr at infinity}
The singular locus $\Sigma_{r}^{\infty}\subset\mathbb{P}^r$ at
infinity has dimension at most $s-2$.
\end{lemma}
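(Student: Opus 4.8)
The plan is to run the same Jacobian/Vandermonde analysis used in the proof of Theorem \ref{theorem: dimension singular locus V_r}, but now restricted to the hyperplane at infinity $\{X_0=0\}\subset\Pp^r$. First I would note that the points of $\mathrm{pcl}(V_r)$ at infinity are exactly the projective points $(0:x_1:\cdots:x_r)$ with $(x_1,\dots,x_r)$ a common zero of $R_1^{d_1},\dots,R_m^{d_m}$, and by Lemma \ref{lemma: rel between R^di and S^wt} these are precisely the $R_i^{d_i}=S_i^{\wt}(\Pi_1,\dots,\Pi_s)$ (with $\Pi_1,\dots,\Pi_s$ the elementary symmetric polynomials in $r$ variables). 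So, writing $V_r^\infty:=V(R_1^{d_1},\dots,R_m^{d_m})\subset\A^r$ for the affine cone, a point $\bfs x\in V_r^\infty$ gives a singular point of $\mathrm{pcl}(V_r)$ at infinity only if the Jacobian of $R_1^h,\dots,R_m^h$ with respect to $X_0,X_1,\dots,X_r$, evaluated at $(0:\bfs x)$, has rank less than $m$; by \eqref{eq: Ri homogenization} the $\partial/\partial X_0$ column at $X_0=0$ contributes only $R_i^{d_i-1}(\bfs x)$, so it is enough to control the submatrix $(\partial R_i^{d_i}/\partial X_j)_{1\le i\le m,\,1\le j\le r}(\bfs x)$.

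Next I would apply the chain rule exactly as in Theorem \ref{theorem: dimension singular locus V_r}: $(\partial\bfs R^{d}/\partial\bfs X)=\big((\partial\bfs S^{\wt}/\partial\bfs Y)\circ\bfs\Pi\big)\cdot(\partial\bfs\Pi/\partial\bfs X)$. Fix $\bfs x\in V_r^\infty$ contributing a point where this has non-full rank and take a nonzero left-kernel vector $\bfs v\in\A^m$. Since $\bfs\Pi(\bfs x)\in W^{\wt}$ (the variety $V(S_1^{\wt},\dots,S_m^{\wt})\subset\A^s$) and $S_1^{\wt},\dots,S_m^{\wt}$ satisfy $(\sf H_2)$ by hypothesis $(\sf H_3)$, the matrix $(\partial\bfs S^{\wt}/\partial\bfs Y)(\bfs\Pi(\bfs x))$ has full rank $m$, so $\bfs w:=\bfs v\cdot(\partial\bfs S^{\wt}/\partial\bfs Y)(\bfs\Pi(\bfs x))\in\A^s$ is nonzero with $\bfs w\cdot A(\bfs x)=\bfs 0$, where $A(\bfs x)$ is the $(s\times r)$-matrix of \eqref{eq: Jacobian Pi}. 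Hence all maximal minors of $A(\bfs x)$ vanish, and by the factorization \eqref{eq: factorization submatrix Jacobian elem symm}–\eqref{eq: Vandermonde determinant} each such minor equals, up to sign, a Vandermonde determinant $\prod_{1\le a<b\le s}(x_{l_b}-x_{l_a})$. As in Theorem \ref{theorem: dimension singular locus V_r}, vanishing of all these forces $\bfs x$ to have at most $s-1$ pairwise-distinct coordinates, so the set of such $\bfs x$ lies in the finite union $\bigcup_{\mathcal I}\mathcal L_{\mathcal I}$ of Remark \ref{remark: sigma_r included union linear vars}, of dimension $s-1$.

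The one extra step compared with the affine theorem — and the step I expect to be the only real obstacle — is gaining the extra codimension needed to drop from $s-1$ to $s-2$. The point is that the cone $V_r^\infty$ is homogeneous and invariant under the symmetric group, while each $\mathcal L_{\mathcal I}=\mathrm{span}(\bfs v^{I_1},\dots,\bfs v^{I_{s-1}})$ is a coordinate-type subspace; I would argue that $V_r^\infty\cap\mathcal L_{\mathcal I}$ cannot be all of $\mathcal L_{\mathcal I}$, so it has dimension at most $s-2$. Concretely: restricting $\Pi_1,\dots,\Pi_s$ to a generic point of $\mathcal L_{\mathcal I}$ (a point with $s-1$ distinct values among the coordinates) and plugging into $S_1^{\wt},\dots,S_m^{\wt}$ yields nonzero weighted-homogeneous polynomials in the $s-1$ parameters, using that $S_1^{\wt},\dots,S_m^{\wt}$ satisfy $(\sf H_1)$ (so in particular $S_1^{\wt}\neq 0$) together with the fact that $\Pi_1,\dots,\Pi_s$ restricted to such an $(s-1)$-dimensional span are algebraically independent modulo the obvious relations — equivalently, $\bfs\Pi$ restricted to $\mathcal L_{\mathcal I}$ is dominant onto an $(s-1)$-dimensional subvariety of $\A^s$ not contained in $W^{\wt}$, because $\dim W^{\wt}=s-m\le s-1$ only forces containment when $m=0$. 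Since $m\ge 1$, $W^{\wt}$ is a proper subvariety, so $\bfs\Pi(\mathcal L_{\mathcal I})\not\subset W^{\wt}$, whence $V_r^\infty\cap\mathcal L_{\mathcal I}=(\bfs\Pi|_{\mathcal L_{\mathcal I}})^{-1}(W^{\wt})$ is a proper subvariety of $\mathcal L_{\mathcal I}$ and therefore has dimension at most $s-2$. Taking the union over the finitely many $\mathcal I$ and then passing to the projectivization (which drops dimension by one more, though the $s-2$ bound is already what is claimed for the affine cone intersected appropriately — here one must be slightly careful: $\Sigma_r^\infty$ is the projective singular locus, so it is the projectivization of a cone of affine dimension at most $s-1$, giving projective dimension at most $s-2$), we conclude $\dim\Sigma_r^\infty\le s-2$.
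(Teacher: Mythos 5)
You have the right skeleton, but you misidentify what the ``real obstacle'' is, and the argument you give for it is incorrect. The paper's proof does not need any claim about $V(R_1^{d_1},\dots,R_m^{d_m})\cap\mathcal{L}_{\mathcal I}$ being a proper subvariety of $\mathcal{L}_{\mathcal I}$. It simply applies Theorem \ref{theorem: dimension singular locus V_r} to $R_1^{d_1},\dots,R_m^{d_m}$ (which is legitimate because, by Lemma \ref{lemma: rel between R^di and S^wt} and hypothesis $(\sf H_3)$, these are $S_i^{\wt}(\Pi_1,\dots,\Pi_s)$ with $S_1^{\wt},\dots,S_m^{\wt}$ satisfying $(\sf H_1)$ and $(\sf H_2)$), concluding that the affine locus of Jacobian rank deficiency inside $V(R_1^{d_1},\dots,R_m^{d_m})\subset\A^r$ has dimension at most $s-1$. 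Since the $R_i^{d_i}$ are homogeneous, this locus is a cone, and the projectivization of a cone of affine dimension at most $s-1$ has projective dimension at most $s-2$. That is the whole proof. You do say this at the very end, but only inside a parenthetical hedge, after presenting a different argument as the essential extra step.

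The argument you promote --- that $\bfs\Pi(\mathcal{L}_{\mathcal I})\not\subset W^{\wt}$ because $W^{\wt}$ is a proper subvariety, hence $V(R_1^{d_1},\dots,R_m^{d_m})\cap\mathcal{L}_{\mathcal I}$ has affine dimension at most $s-2$ --- has a genuine gap. The dimension count you invoke ($\dim W^{\wt}=s-m$ while $\overline{\bfs\Pi(\mathcal{L}_{\mathcal I})}$ can have dimension $s-1$) rules out containment only when $m\ge 2$; ``$W^{\wt}$ is proper'' does not suffice. For $m=1$ the claim fails: take $s=2$, $m=1$, and $S_1$ with highest-weight part $S_1^{\wt}=cY_1^2-Y_2$ where $c=\binom{r}{2}/r^2$. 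Then $\partial S_1^{\wt}/\partial Y_2=-1$ never vanishes, so $(\sf H_1)$--$(\sf H_3)$ all hold, yet on the diagonal $\mathcal{L}_{\mathcal I}=\{(t,\dots,t)\}$ one has $\Pi_1=rt$, $\Pi_2=\binom{r}{2}t^2$, so $c\Pi_1^2-\Pi_2\equiv 0$. Thus $\bfs\Pi(\mathcal{L}_{\mathcal I})\subset W^{\wt}$, the whole diagonal lies in $V(R_1^{d_1})$, and a direct computation shows $\nabla R_1^{d_1}$ vanishes there, so the affine singular locus has dimension $s-1=1$, not $s-2$. The bound $s-2$ for $\Sigma_r^\infty$ is still correct, but only because of the cone-projectivization step, not the argument you lead with. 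You should promote the parenthetical remark to the actual proof and drop the claim about $V(R_1^{d_1},\dots,R_m^{d_m})\cap\mathcal{L}_{\mathcal I}$ being proper.
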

\begin{proof}
Let $\bfs{x}:=(0:x_1:\dots: x_r)$ be an arbitrary point of
$\Sigma_{r}^{\infty}$. Since the polynomials $R_i^{h}$ vanish
identically in $\mathrm{pcl}(V_r)$, we have
$R_i^{h}(\bfs{x})=R_i^{d_i}(x_1\klk x_r)=0$ for $1\le i\le m$.

Denote by $(\partial \bfs{R}^{\bfs{d}}/\partial \bfs{X}):=(\partial
R_i^{d_i}/\partial X_j)_{1\le i\le m,1\leq j \le r}$ the Jacobian
matrix of $R_{1}^{d_1}\klk R_{m}^{d_m}$ with respect to  $X_1\klk
X_r$. We claim that $(\partial \bfs{R}^{\bfs{d}}/\partial
\bfs{X})(\bfs{x})$ does not have full rank. Indeed, if not, we would
have that $\dim \mathcal{T}_{\bfs{x}}(\mathrm{pcl}(V_r))\le r-m$,
which implies that $\bfs{x}$ is a nonsingular point of
$\mathrm{pcl}(V_r)$, contradicting thus the hypothesis on $\bfs{x}$.

Lemma \ref{lemma: rel between R^di and S^wt} asserts that
$R_i^{d_i}=S_i^{\wt}(\Pi_1,\ldots,\Pi_s)$ for $1\leq i \leq m$.
Therefore, combining the hypothesis ($\sf H_3$) with Lemma
\ref{lemma: rel between R^di and S^wt} we deduce that the
polynomials $R_1^{d_1}\klk R_m^{d_m}$ satisfy the hypotheses of
Theorem \ref{theorem: dimension singular locus V_r}. We conclude
that the set of points $\bfs x_{\mathrm{aff}}:=(0,x_1\klk x_r)\in
V(R_1^{d_1}\klk R_m^{d_m})\subset\A^r$ such that $(\partial
\bfs{R}^{\bfs{d}}/\partial \bfs{X})(\bfs x_{\mathrm{aff}})$ does not
have full rank, is an affine cone of $\A^{r+1}$ of dimension at most
$s-1$. It follows that the projective variety $\Sigma_{r}^{\infty}$
has dimension at most $s-2$.
\end{proof}

Our next result concerns the projective variety
$V(R_1^{d_1},\ldots,R_m^{d_m})\subset \Pp^{r-1}$. This will allow us
to obtain further information concerning to  the  behavior of
$\mathrm{pcl}(V_r)$ at infinity.

\begin{lemma}\label{Lemma: property variety Ri^di}
$V(R_1^{d_1},\ldots,R_m^{d_m})\subset \Pp^{r-1}$ is absolutely
irreducible of dimension $r-m-1$, degree at most
$\prod_{i=1}^{m}d_i$ and singular locus of dimension at most $s-2$.
\end{lemma}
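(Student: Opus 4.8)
The plan is to derive the four assertions about $V(R_1^{d_1},\dots,R_m^{d_m})\subset\Pp^{r-1}$ in essentially the reverse order of difficulty, reusing the machinery already built for $V_r$. First I would record that, by Lemma \ref{lemma: rel between R^di and S^wt}, we have $R_i^{d_i}=S_i^{\wt}(\Pi_1,\dots,\Pi_s)$, so the homogeneous polynomials $R_1^{d_1},\dots,R_m^{d_m}$ are exactly the polynomials obtained from $S_1^{\wt},\dots,S_m^{\wt}$ by the same substitution that produced the $R_i$ from the $S_i$. Since hypothesis $(\sf H_3)$ says that $S_1^{\wt},\dots,S_m^{\wt}$ satisfy $(\sf H_1)$ and $(\sf H_2)$ — i.e. the same hypotheses we imposed on the $S_i$ — every structural result proved in Section \ref{section: geometry symm complete inters} for $V_r$ applies verbatim to the affine variety $V(R_1^{d_1},\dots,R_m^{d_m})\subset\A^r$. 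In particular Lemma \ref{lemma: V_r is complete inters} gives that $R_1^{d_1},\dots,R_m^{d_m}$ form a regular sequence and the affine variety they define has pure dimension $r-m$; since these polynomials are homogeneous, this is a cone, so the projective variety $V(R_1^{d_1},\dots,R_m^{d_m})\subset\Pp^{r-1}$ has pure dimension $r-m-1$ and is a set-theoretic complete intersection there.

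Next, for the degree bound, the homogeneous polynomials $R_i^{d_i}$ have degree $d_i$, so the B\'ezout inequality \eqref{eq: Bezout} applied to the regular sequence gives $\deg V(R_1^{d_1},\dots,R_m^{d_m})\le\prod_{i=1}^m d_i$, exactly as in Corollary \ref{coro: radicality and degree Vr}. For the singular locus, I would invoke Theorem \ref{theorem: dimension singular locus V_r} with $S_i$ replaced by $S_i^{\wt}$ (legitimate by $(\sf H_3)$): it tells us that the set of points of the affine cone $V(R_1^{d_1},\dots,R_m^{d_m})\subset\A^r$ where $(\partial\bfs{R}^{\bfs d}/\partial\bfs X)$ drops rank has dimension at most $s-1$, hence the corresponding projective variety has singular locus of dimension at most $s-2$. (This is the same computation already carried out inside the proof of Lemma \ref{lemma: dim singular locus Vr at infinity}, so I may simply cite it.) Combining the dimension $r-m-1$ with the singular-locus dimension $\le s-2$, and using the standing inequality $s\le r-m-2$, we get $\dim V(R_1^{d_1},\dots,R_m^{d_m})-\dim\mathrm{Sing}\ge (r-m-1)-(s-2)=r-m-s+1\ge 3\ge 2$, so the complete intersection is regular in codimension $1$, i.e. normal.

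Finally, absolute irreducibility follows from Theorem \ref{theorem: normal complete int implies irred}: a projective normal complete intersection is absolutely irreducible. One technical point to address is that Theorem \ref{theorem: normal complete int implies irred} is stated for an \emph{ideal-theoretic} complete intersection, whereas so far we only have a set-theoretic one; but this is handled exactly as in Corollary \ref{coro: radicality and degree Vr}, namely by \cite[Theorem 18.15]{Eisenbud95}: the regular sequence $R_1^{d_1},\dots,R_m^{d_m}$ defines an ideal whose singular-minor locus has codimension $\ge 1$ in $V(R_1^{d_1},\dots,R_m^{d_m})$ (because the rank-drop locus has dimension $\le s-1 < r-m$), so the ideal is radical and $V(R_1^{d_1},\dots,R_m^{d_m})$ is an honest complete intersection. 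The main obstacle, such as it is, is purely bookkeeping: making sure that the substitution $S_i^{\wt}\mapsto R_i^{d_i}=S_i^{\wt}(\Pi_1,\dots,\Pi_s)$ lands one squarely back in the hypotheses of Section \ref{section: geometry symm complete inters}, so that Lemma \ref{lemma: V_r is complete inters}, Theorem \ref{theorem: dimension singular locus V_r} and Corollary \ref{coro: radicality and degree Vr} can be reused without rerunning their proofs; once that identification is made explicit, everything else is a short chain of citations.
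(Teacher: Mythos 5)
Your argument is correct and follows essentially the same route as the paper's proof: use Lemma~\ref{lemma: rel between R^di and S^wt} to identify $R_i^{d_i}=S_i^{\wt}(\Pi_1,\dots,\Pi_s)$, invoke ($\sf H_3$) so that Lemma~\ref{lemma: V_r is complete inters}, Theorem~\ref{theorem: dimension singular locus V_r} and Corollary~\ref{coro: radicality and degree Vr} apply verbatim to the affine cone $V(R_1^{d_1},\dots,R_m^{d_m})\subset\A^r$, pass to the projective quotient to drop each dimension by one, and conclude absolute irreducibility from Theorem~\ref{theorem: normal complete int implies irred}. The only (minor and welcome) difference is that you explicitly spell out the upgrade from set-theoretic to ideal-theoretic complete intersection via \cite[Theorem 18.15]{Eisenbud95} before invoking Theorem~\ref{theorem: normal complete int implies irred}, a step the paper leaves folded into its citation of Corollary~\ref{coro: radicality and degree Vr}.
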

\begin{proof}
By Lemma \ref{lemma: rel between R^di and S^wt} we have
$R_{i}^{d_i}=S_{i}^{\wt}(\Pi_1,\ldots,\Pi_s)$ for $1\leq i \leq m$.
Since the polynomials $S_1^{\wt}\klk S_m^\wt$ satisfy the hypotheses
($\sf H_1$) and ($\sf H_2$), by Lemma \ref{lemma: V_r is complete
inters}, Theorem \ref{theorem: dimension singular locus V_r} and
Corollary \ref{coro: radicality and degree Vr} we see that the
affine $\fq$--variety of $\A^r$ defined by
$R_1^{d_1},\ldots,R_m^{d_m}$ is a cone of pure dimension $r-m$,
degree at most $\prod_{i=1}^{m}d_i$ and singular locus of dimension
at most $s-1$. Therefore, the projective variety
$V(R_1^{d_1},\ldots,R_m^{d_m})\subset \Pp^{r-1}$ has pure dimension
$r-m-1$, degree at most $\prod_{i=1}^{m}d_i$ and singular locus of
dimension at most $s-2$. In particular,
$V(R_1^{d_1},\ldots,R_m^{d_m})$ is a set--theoretic complete
intersection having a singular locus of codimension at least
$r-m-1-s+2\geq 3$. Then Theorem \ref{theorem: normal complete int
implies irred} shows that $V(R_1^{d_1},\ldots,R_m^{d_m})$ is
absolutely irreducible. This completes the proof of the lemma.
\end{proof}

Now we are able to establish all the facts we need on the geometry
of $\mathrm{pcl}(V_r)$ at infinity.
\begin{theorem}\label{th: pcl in infinity is absolutely irreducible}
$\mathrm{pcl}(V_r)\cap \{X_0=0\}\subset \Pp^{r-1}$ is a complete
intersection of dimension $r-m-1$, degree $\prod_{i=1}^{m}d_i$,
which is regular in codimension $r-m-s\ge 2$.
\end{theorem}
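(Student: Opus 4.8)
The plan is to show that $\mathrm{pcl}(V_r)\cap\{X_0=0\}$ coincides, as a set, with the projective variety $V(R_1^{d_1},\ldots,R_m^{d_m})\subset\Pp^{r-1}$, and then to transfer all the properties of the latter (established in Lemma \ref{Lemma: property variety Ri^di}) to the former. The inclusion $\mathrm{pcl}(V_r)\cap\{X_0=0\}\subseteq V(R_1^{d_1},\ldots,R_m^{d_m})$ is immediate from \eqref{eq: Ri homogenization}: every point $(0:x_1:\dots:x_r)$ of $\mathrm{pcl}(V_r)$ satisfies $R_i^h(0,x_1,\dots,x_r)=R_i^{d_i}(x_1,\dots,x_r)=0$ for $1\le i\le m$.

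For the reverse inclusion I would argue by a dimension count. By Lemma \ref{Lemma: property variety Ri^di}, $V(R_1^{d_1},\ldots,R_m^{d_m})\subset\Pp^{r-1}$ is absolutely irreducible of dimension $r-m-1$. On the other hand, $\mathrm{pcl}(V_r)$ has pure dimension $r-m$, so each irreducible component of $\mathrm{pcl}(V_r)\cap\{X_0=0\}$ has dimension at least $r-m-1$ (a hyperplane section drops the dimension by at most one, and $\mathrm{pcl}(V_r)$ is not contained in $\{X_0=0\}$ since it is the closure of an affine variety). Combining this with the inclusion $\mathrm{pcl}(V_r)\cap\{X_0=0\}\subseteq V(R_1^{d_1},\ldots,R_m^{d_m})$ and the absolute irreducibility of the right-hand side, we conclude that $\mathrm{pcl}(V_r)\cap\{X_0=0\}=V(R_1^{d_1},\ldots,R_m^{d_m})$.

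With the set-theoretic identification in hand, the remaining assertions follow directly. The dimension is $r-m-1$ and the variety is absolutely irreducible by Lemma \ref{Lemma: property variety Ri^di}. For the degree and the complete intersection property: $\mathrm{pcl}(V_r)\cap\{X_0=0\}$ is defined set-theoretically by the $m$ forms $R_1^{d_1},\ldots,R_m^{d_m}$, which — being the highest-weight substitutions $S_i^{\wt}(\Pi_1,\dots,\Pi_s)$ satisfying $(\sf H_1)$ — form a regular sequence in $\fq[X_1,\dots,X_r]$ by the argument in Section \ref{section: geometry symm complete inters} (the preimage under $\bfs\Pi^r$ argument), hence also as homogeneous forms; since by Lemma \ref{Lemma: property variety Ri^di} and Corollary \ref{coro: radicality and degree Vr} they define a radical ideal, $V(R_1^{d_1},\ldots,R_m^{d_m})$ is an ideal-theoretic complete intersection, and the B\'ezout theorem gives $\deg=\prod_{i=1}^m d_i$ (the upper bound of Lemma \ref{Lemma: property variety Ri^di} is attained because the forms $R_i^{d_i}$ do have degrees $d_i$). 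Finally, the singular locus has dimension at most $s-2$ by Lemma \ref{lemma: dim singular locus Vr at infinity} (or equivalently by Lemma \ref{Lemma: property variety Ri^di}), so the codimension of the singular locus in $\mathrm{pcl}(V_r)\cap\{X_0=0\}$ is at least $(r-m-1)-(s-2)=r-m-s+1\ge 3>2$, in particular the variety is regular in codimension $r-m-s\ge 2$, using the hypothesis $s\le r-m-2$.

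The main obstacle I anticipate is the reverse set-theoretic inclusion: one must be careful that $\mathrm{pcl}(V_r)\cap\{X_0=0\}$ is genuinely nonempty and of the expected dimension $r-m-1$, rather than something smaller, before the absolute irreducibility of $V(R_1^{d_1},\ldots,R_m^{d_m})$ can be invoked to force equality. This is handled by the standard fact that for a projective variety of pure dimension $r-m$ not contained in a hyperplane $H$, every component of the intersection with $H$ has dimension exactly $r-m-1$; since $\mathrm{pcl}(V_r)$ is the projective closure of a nonempty affine variety, it meets $\{X_0=0\}$ properly, so this applies. Everything else is bookkeeping with results already proved in the excerpt.
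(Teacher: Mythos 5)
Your proof is correct and follows essentially the same route as the paper's: both establish the inclusion $\mathrm{pcl}(V_r)\cap\{X_0=0\}\subseteq V(R_1^{d_1},\ldots,R_m^{d_m})$ from \eqref{eq: Ri homogenization}, then obtain equality by a dimension count against the absolutely irreducible variety of Lemma \ref{Lemma: property variety Ri^di}, and finally transfer the degree, complete intersection, and regularity assertions from that lemma and Corollary \ref{coro: radicality and degree Vr}. The only difference is that you spell out more carefully the proper--intersection argument ensuring every component of the hyperplane section has dimension exactly $r-m-1$, which the paper leaves implicit.
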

\begin{proof}
Recall that the projective variety $\mathrm{pcl}(V_r)$ has pure
dimension $r-m$. Hence, each irreducible component of
$\mathrm{pcl}(V_r)\cap \{X_0=0\}$ has dimension at least $r-m-1$.

From (\ref{eq: Ri homogenization}) we deduce that
$\mathrm{pcl}(V_r)\cap \{X_0=0\}\subset V(R_1^{d_1}\klk R_m^{d_m})$.
By Lemma \ref{Lemma: property variety Ri^di} we have that
$V(R_1^{d_1},\ldots,R_m^{d_m})$ is absolutely irreducible of
dimension $r-m-1$. It follows that $\mathrm{pcl}(V_r)\cap \{X_0=0\}$
is also absolutely irreducible of dimension $r-m-1$, and thus,
$$\mathrm{pcl}(V_r)\cap \{X_0=0\}=V(R_1^{d_1},\ldots,R_m^{d_m}).$$

According to Corollary \ref{coro: radicality and degree Vr}, the
polynomials $R_1^{d_1},\ldots,R_{m}^{d_m}$ define a radical ideal.
We conclude that $V(R_1^{d_1},\ldots,R_m^{d_m})$ is a complete
intersection and the B\'ezout theorem implies
$$\deg(\mathrm{pcl}(V_r)\cap \{X_0=0\})=\prod_{i=1}^{m}d_i.$$

It remains to show the assertion concerning the regularity of
$\mathrm{pcl}(V_r)\cap \{X_0=0\}$. From Lemma \ref{Lemma: property
variety Ri^di} we deduce that the singular locus of
$\mathrm{pcl}(V_r)\cap \{X_0=0\}$ has dimension at most $s-2$.
Therefore, $\mathrm{pcl}(V_r)\cap \{X_0=0\}$ is regular in
codimension $r-m-1-(s-2)-1$.
\end{proof}

We conclude this section with a statement that summarizes all the
facts we need concerning the geometry of the projective closure
$\mathrm{pcl}(V_r)$.
\begin{theorem}\label{theorem: proj closure of Vr is abs irred}
The projective variety $\mathrm{pcl}(V_r)\subset\Pp^r$ is a complete
intersection of dimension $r-m$ and degree $\prod_{i=1}^r d_i$,
which is regular in codimension $r-m-s\ge 2$.
\end{theorem}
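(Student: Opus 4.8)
The plan is to collect the facts already established about $V_r$ and its projective closure and feed them through the Hartshorne connectedness machinery (Theorem \ref{theorem: normal complete int implies irred}), exactly as was done for the variety at infinity in Lemma \ref{Lemma: property variety Ri^di}. First I would record that $V_r\subset\A^r$ is a set--theoretic complete intersection of dimension $r-m$ (Lemma \ref{lemma: V_r is complete inters}) whose defining polynomials $R_1\klk R_m$ generate a radical ideal (Corollary \ref{coro: radicality and degree Vr}); consequently $\mathrm{pcl}(V_r)$ is the variety defined by the homogenizations $R_1^h\klk R_m^h$, it has pure dimension $r-m$, degree equal to $\deg V_r$, and (again using radicality, via \cite[\S I.5, Exercise 6]{Kunz85}) the ideal $(R_1\klk R_m)^h$ is radical. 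The degrees of the $R_i^h$ are $d_1\klk d_m$, so once we know $\mathrm{pcl}(V_r)$ is an ideal--theoretic complete intersection the B\'ezout theorem gives $\deg \mathrm{pcl}(V_r)=\prod_{i=1}^m d_i$ (note the statement's ``$\prod_{i=1}^r$'' should read $\prod_{i=1}^m$).

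The crux is controlling the singular locus of $\mathrm{pcl}(V_r)$. I would split $\mathrm{Sing}(\mathrm{pcl}(V_r))$ into its affine part, contained in $\Sigma_r=\mathrm{Sing}(V_r)$, and its part at infinity, contained in $\Sigma_r^\infty=\mathrm{Sing}(\mathrm{pcl}(V_r))\cap\{X_0=0\}$. Theorem \ref{theorem: dimension singular locus V_r} bounds $\dim\Sigma_r\le s-1$, and Lemma \ref{lemma: dim singular locus Vr at infinity} bounds $\dim\Sigma_r^\infty\le s-2\le s-1$. Hence $\dim\mathrm{Sing}(\mathrm{pcl}(V_r))\le s-1$, so its codimension in $\mathrm{pcl}(V_r)$ is at least $(r-m)-(s-1)=r-m-s+1\ge 3$ by the standing hypothesis $s\le r-m-2$. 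In particular $\mathrm{pcl}(V_r)$ is regular in codimension $r-m-s\ge 2$, which is the regularity assertion, and it is certainly regular in codimension $1$, i.e.\ normal.

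It remains to upgrade ``set--theoretic'' to ``ideal--theoretic'' complete intersection and to deduce the degree. Since $R_1\klk R_m$ form a regular sequence and the locus where the Jacobian $(\partial\bfs R/\partial\bfs X)$ drops rank has codimension at least $2$ in $V_r$ (Theorem \ref{theorem: dimension singular locus V_r} gives dimension $\le s-1\le r-m-2$), \cite[Theorem 18.15]{Eisenbud95} already told us in Corollary \ref{coro: radicality and degree Vr} that $(R_1\klk R_m)$ is radical; homogenizing, $(R_1\klk R_m)^h$ is radical and is generated by $R_1^h\klk R_m^h$ (because $R_1\klk R_m$ is a regular sequence, their homogenizations generate the homogenization of the ideal — this is the one point needing a small argument, comparing Hilbert functions or invoking that a regular sequence remains one after homogenization in the $X_0$--graded sense), so $\mathrm{pcl}(V_r)$ is an ideal--theoretic complete intersection of multidegree $(d_1\klk d_m)$ and the B\'ezout theorem yields $\deg\mathrm{pcl}(V_r)=\prod_{i=1}^m d_i$, consistent with $\deg V_r\le\prod_{i=1}^m d_i$ and Theorem \ref{th: pcl in infinity is absolutely irreducible}.

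The main obstacle I anticipate is precisely that homogenization step: making sure the homogenizations $R_1^h\klk R_m^h$ generate $(R_1\klk R_m)^h$ on the nose, rather than merely cutting out the same variety. For a regular sequence this is standard (the associated graded / initial ideal with respect to the degree filtration is again a complete intersection, so no extra generators appear), and in fact the excerpt has already been using this implicitly when it passes from the affine cone $V(R_1^{d_1}\klk R_m^{d_m})$ to a complete intersection in Lemma \ref{Lemma: property variety Ri^di}; I would simply cite \cite[\S I.5]{Kunz85} together with the regular--sequence property and move on. Everything else — dimension, degree, the codimension bookkeeping $r-m-s\ge 2$ — is routine once the two singular--locus lemmas are in hand.
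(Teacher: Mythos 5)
Your treatment of the dimension and the singular--locus bookkeeping --- splitting $\mathrm{Sing}(\mathrm{pcl}(V_r))$ into the affine part bounded by Theorem~\ref{theorem: dimension singular locus V_r} and the part at infinity bounded via Lemma~\ref{lemma: dim singular locus Vr at infinity}, then reading off $r-m-s\ge 2$ --- is correct and matches the paper, as is the observation that the exponent in the degree should be $m$ rather than $r$. The gap is precisely at the point you flagged: you assert that because $R_1\klk R_m$ is a regular sequence, the homogenizations $R_1^h\klk R_m^h$ generate $(R_1\klk R_m)^h$, calling this ``standard''. It is false in general. Take $R_1:=X$, $R_2:=Y-X^2$ in $\fq[X,Y]$: this is a regular sequence generating $(X,Y)$, whose homogenization in $\fq[X_0,X,Y]$ is again $(X,Y)$; but $R_1^h=X$, $R_2^h=YX_0-X^2$ generate only $(X,\,YX_0)\subsetneq(X,Y)$, and geometrically $V(R_1^h,R_2^h)$ acquires the spurious point $(0:0:1)$ at infinity. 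Note that $X,\,YX_0-X^2$ is still a regular sequence, so the ``regular sequence survives homogenization'' fact you invoke does not repair the claim; and the Lemma~\ref{Lemma: property variety Ri^di} passage you cite as precedent is a different situation (there the $R_i^{d_i}$ are already homogeneous, so no homogenization of an ideal occurs).

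The paper avoids the question entirely and works with $V(R_1^h\klk R_m^h)$ directly. Combining the inclusions $V(R_1^h\klk R_m^h)\cap\{X_0\neq 0\}\subset V(R_1\klk R_m)$ and $V(R_1^h\klk R_m^h)\cap\{X_0=0\}\subset V(R_1^{d_1}\klk R_m^{d_m})$ with Lemma~\ref{Lemma: property variety Ri^di} and Lemma~\ref{lemma: V_r is complete inters}, one sees that $V(R_1^h\klk R_m^h)\subset\Pp^r$ is a set--theoretic complete intersection of dimension $r-m$; applying Theorem~\ref{theorem: dimension singular locus V_r} on both charts shows its singular locus has codimension at least $r-m-(s-1)\ge 3$, whence $(R_1^h\klk R_m^h)$ is radical by \cite[Theorem 18.15]{Eisenbud95} and $V(R_1^h\klk R_m^h)$ is absolutely irreducible by Theorem~\ref{theorem: normal complete int implies irred}. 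Since $\mathrm{pcl}(V_r)$ is contained in this irreducible variety of the same pure dimension $r-m$, the two coincide, and only \emph{then} does one conclude that $\mathrm{pcl}(V_r)$ is an ideal--theoretic complete intersection with degree $\prod_{i=1}^m d_i$ by B\'ezout. You would need this (or an equivalent) argument in place of the homogenization claim to make the proof correct.
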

\begin{proof}
We already know that $\mathrm{pcl}(V_r)$ is of pure dimension $r-m$.
According to Theorem \ref{theorem: dimension singular locus V_r},
the singular locus of $\mathrm{pcl}(V_r)$ lying in the open set
$\{X_0\not=0\}$ has dimension at most $s-1$. On the other hand, by
\cite[Lemma 1.1]{GhLa02a} the singular locus of $\mathrm{pcl}(V_r)$
at infinity is contained in the singular locus of
$\mathrm{pcl}(V_r)\cap \{X_0=0\}$, and Theorem \ref{th: pcl in
infinity is absolutely irreducible} shows that the singular locus of
$\mathrm{pcl}(V_r)\cap \{X_0=0\}$ has dimension at most $s-2$. We
conclude that the singular locus of $\mathrm{pcl}(V_r)$ has
dimension at most $s-1$ and thus $\mathrm{pcl}(V_r)$ is regular in
codimension $r-m-(s-1)-1=r-m-s$.

Observe that the following inclusions hold:
\begin{align*}
    V(R_1^{h},\ldots,R_m^{h})\cap\{X_0\not=0\}&
    \subset V(R_1,\ldots,R_m),\\
    V(R_1^{h},\ldots,R_m^{h})\cap\{X_0=0\}&\subset
    V(R_{1}^{d_1},\ldots,R_{m}^{d_m}).
\end{align*}
Lemma \ref{Lemma: property variety Ri^di} proves that
$V(R_1^{d_1},\ldots,R_m^{d_m})\subset \Pp^{r-1} $ is absolutely
irreducible of dimension $r-m-1$, while Lemma \ref{lemma: V_r is
complete inters} shows that $V(R_1,\ldots,R_m)\subset\A^r$ is of
pure dimension $r-m$. We conclude that
$V(R_1^{h},\ldots,R_m^{h})\subset\Pp^r$ has dimension at most $r-m$.
Since it is defined by $m$ polynomials, we deduce that it is a
set--theoretic complete intersection. Applying Theorem \ref{theorem:
dimension singular locus V_r} to $R_1\klk R_m$, and to
$R_1^{d_1}\klk R_m^{d_m}$, we see that the polynomials
$R_1^{h},\ldots,R_m^{h}$ define a radical ideal and the singular
locus of $V(R_1^{h},\ldots,R_m^{h})$ has codimension at least
$r-m-(s-1)\ge 3$. Then Theorem \ref{theorem: normal complete int
implies irred} proves that $V(R_1^{h},\ldots,R_m^{h})$ is absolutely
irreducible.

On the other hand, we observe that $\mathrm{pcl}(V_r)$ is contained
in the projective variety  $V(R_1^{h},\ldots,R_m^{h})$. Moreover,
since $\mathrm{pcl}(V_r)$ is of pure dimension $r-m$, it follows
that
\begin{equation}\label{id: eq de pcl}
\mathrm{pcl}(V_r)=V(R_1^{h},\ldots,R_m^{h}).
\end{equation}
Finally, since the polynomials $R_1^h,\ldots,R_m^h$ define a radical
ideal, \eqref{id: eq de pcl} and the B\'ezout theorem prove that
$\mathrm{pcl}(V_r)$ is a complete intersection of degree
$\prod_{i=1}^m d_i$.
\end{proof}
%
%
\subsection{Estimates on the number of $\fq$--rational points}
In what follows, we shall use an estimate on the number of
$\fq$--rational points of a projective complete intersection defined
over $\fq$ due to \cite{CaMaPr15} (see \cite{GhLa02a},
\cite{GhLa02}, \cite{CaMa07} and \cite{MaPePr16} for further
explicit estimates of this type). In \cite[Corollary 8.4]{CaMaPr15}
the authors prove that, for a complete intersection $V\subset\Pp^r$
defined over $\fq$, of dimension $r-m$ and multidegree $\bfs
d:=(d_1\klk d_m)$, which is regular in codimension $2$, the number
$|V(\fq)|$ of $\fq$--rational points of $V$ satisfies
\begin{equation}\label{eq: estimate rat points CML}
    \big||V(\fq)|-p_{r-m}\big|\leq 14 D^3\delta^2q^{r-m-1},
\end{equation}
where $p_{r-m}:=q^{r-m}+\cdots+ q+1=|\Pp^{r-m}(\fq)|$,
$\delta:=d_1\cdots d_m$ and $D:=\sum_{i=1}^m(d_i-1)$. Our aim is to
apply (\ref{eq: estimate rat points CML}) to estimate the number of
$\fq$--rational points of the variety $V_r$ defined by the
polynomials of \eqref{def: polynomials in Ri}.

Now let $V_r\subset\A^r$ be the affine $\fq$--variety defined by the
polynomials $R_1\klk R_m$ of \eqref{def: polynomials in Ri} and
$V_{r,\infty}:=\mathrm{pcl}(V_r)\cap\{X_0=0\}$. Combining Theorems
\ref{th: pcl in infinity is absolutely irreducible} and
\ref{theorem: proj closure of Vr is abs irred} with (\ref{eq:
estimate rat points CML}) we obtain
\begin{align*}
\big||\mathrm{pcl}(V_r)(\fq)|-p_{r-m}\big|&\le 14
D^3
\delta^2q^{r-m-1},  \\
\big||V_{r,\infty}(\fq)|-p_{r-m-1}\big|&\le 14 D^3
\delta^2q^{r-m-2},
\end{align*}
where $D:=\sum_{i=1}^m(d_i-1)$ and $\delta:=\prod_{i=1}^m d_i$.
As a consequence,
\begin{align*}
    \big||V_r(\fq)|-q^{r-m}\big|& =
    \big||\mathrm{pcl}(V_r)(\fq)|-|V_{r,\infty}(\fq)|-
      p_{r-m}+p_{r-m-1}\big|\nonumber\\[1ex]
      & \le
      \big||\mathrm{pcl}(V_r)(\fq)|-p_{r-m}\big|+
      \big||V_{r,\infty}(\fq)|-p_{r-m-1}\big|
      \nonumber\\[1ex] & \le  14 D^3
\delta^2(q+1)q^{r-m-2}.
  \end{align*}
As a consequence, we have the following result.
\begin{theorem}\label{theorem: nb point V_r}
Let $s,r,m$ be positive integers with $ m\leq s\leq r-m-2$. Let
$R_1,\ldots,R_m\in\fq[X_1,\ldots,X_r]$ be polynomials defined as
$R_i:=S_i(\Pi_1,\ldots,\Pi_s)$ for $1\leq i \leq m$, where $S_1\klk
S_m\in\fq[Y_1\klk Y_s]$ satisfy hypothesis $({\sf H_1})$, $({\sf
H_2})$ and $({\sf H_3})$. Denote $d_i:=\deg R_i$ for $1\leq i \leq
m$, $D:=\sum_{i=1}^m(d_i-1)$ and $\delta:=\prod_{i=1}^m d_i$. If
$V_r:=V(R_1\klk R_m)\subset\A^r$, then the following estimate holds:
$$\big||V_r(\fq)|-q^{r-m}\big|\le  14 D^3 \delta^2(q+1)q^{r-m-2}.$$
\end{theorem}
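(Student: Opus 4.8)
The plan is to reduce the statement to the two geometric inputs already established in this section—namely Theorem~\ref{theorem: proj closure of Vr is abs irred} on the projective closure $\mathrm{pcl}(V_r)$ and Theorem~\ref{th: pcl in infinity is absolutely irreducible} on the part at infinity $V_{r,\infty}:=\mathrm{pcl}(V_r)\cap\{X_0=0\}$—and then to feed both into the explicit estimate \eqref{eq: estimate rat points CML} of \cite[Corollary 8.4]{CaMaPr15}. The key observation is the elementary decomposition $|V_r(\fq)|=|\mathrm{pcl}(V_r)(\fq)|-|V_{r,\infty}(\fq)|$, valid because the affine points of $\mathrm{pcl}(V_r)$ (those with $X_0\ne 0$) are in bijection with $V_r(\fq)$, and the remaining rational points lie in the hyperplane at infinity.

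First I would invoke Theorem~\ref{theorem: proj closure of Vr is abs irred}: $\mathrm{pcl}(V_r)\subset\Pp^r$ is a complete intersection of dimension $r-m$ and multidegree $(d_1\klk d_m)$ which is regular in codimension $r-m-s\ge 2$, so the hypotheses of \eqref{eq: estimate rat points CML} are met and we obtain $\big||\mathrm{pcl}(V_r)(\fq)|-p_{r-m}\big|\le 14 D^3\delta^2 q^{r-m-1}$. Next I would apply the same estimate to $V_{r,\infty}\subset\Pp^{r-1}$: by Theorem~\ref{th: pcl in infinity is absolutely irreducible} this is a complete intersection of dimension $r-m-1$, multidegree $(d_1\klk d_m)$ and regular in codimension $r-m-s\ge 2$, hence $\big||V_{r,\infty}(\fq)|-p_{r-m-1}\big|\le 14 D^3\delta^2 q^{r-m-2}$. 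Note that $D$ and $\delta$ are the same for both varieties since the defining forms $R_i^{d_i}$ have the same degrees $d_i$ as the $R_i$.

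Finally I would combine the two bounds. Writing $q^{r-m}=p_{r-m}-p_{r-m-1}$, the triangle inequality gives
$$
\big||V_r(\fq)|-q^{r-m}\big|\le \big||\mathrm{pcl}(V_r)(\fq)|-p_{r-m}\big|+\big||V_{r,\infty}(\fq)|-p_{r-m-1}\big|\le 14 D^3\delta^2 q^{r-m-1}+14 D^3\delta^2 q^{r-m-2},
$$
and factoring out $q^{r-m-2}$ yields exactly $14 D^3\delta^2(q+1)q^{r-m-2}$, as claimed. (For the estimate to be non-vacuous one needs $r-m-2\ge 0$, which is guaranteed by the standing hypothesis $s\le r-m-2$ together with $s\ge 1$.)

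There is essentially no obstacle left at this stage: all the geometric difficulty—showing $\mathrm{pcl}(V_r)$ and its hyperplane section at infinity are complete intersections regular in codimension $2$—has already been carried out in Theorems~\ref{theorem: dimension singular locus V_r} through \ref{theorem: proj closure of Vr is abs irred}. The only points requiring care are bookkeeping ones: verifying the multidegree and the quantities $D,\delta$ coincide for $\mathrm{pcl}(V_r)$ and $V_{r,\infty}$, and confirming that the codimension-$2$ regularity hypothesis of \cite[Corollary 8.4]{CaMaPr15} is exactly what Theorems~\ref{th: pcl in infinity is absolutely irreducible} and \ref{theorem: proj closure of Vr is abs irred} provide via the inequality $r-m-s\ge 2$.
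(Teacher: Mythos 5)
Your proposal is correct and matches the paper's own proof essentially line for line: both apply the estimate \eqref{eq: estimate rat points CML} of \cite[Corollary 8.4]{CaMaPr15} to $\mathrm{pcl}(V_r)$ (via Theorem~\ref{theorem: proj closure of Vr is abs irred}) and to $V_{r,\infty}$ (via Theorem~\ref{th: pcl in infinity is absolutely irreducible}), then combine the two bounds through the identities $|V_r(\fq)|=|\mathrm{pcl}(V_r)(\fq)|-|V_{r,\infty}(\fq)|$ and $q^{r-m}=p_{r-m}-p_{r-m-1}$ and the triangle inequality. No material difference from the paper's argument.
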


In the applications of the next sections not only estimates on the
number of $\fq$--rational points of a given complete intersection
are required, but also on the number of $\fq$--rational points with
certain pairwise--distinct coordinates, which is the subject of the
next results.
\begin{theorem}
\label{theorem: nb point V_r distinct coordinates} With notations
and assumptions as in Theorem \ref{theorem: nb point V_r}, given $i$
and $j$ with $1\le i<j\le r$ we have that $V_r\cap\{X_i=X_j\}$ is of
pure dimension $r-m-1$. In particular,
$$|V_r(\fq)\cap\{X_i=X_j\}|\le \delta q^{r-m-1}.$$
\end{theorem}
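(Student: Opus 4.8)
The plan is to reduce the statement about $V_r\cap\{X_i=X_j\}$ to an instance of the machinery already developed for symmetric complete intersections, and then invoke the affine B\'ezout bound \eqref{eq: upper bound -- affine gral}. By symmetry of $R_1\klk R_m$ under permutations of $X_1\klk X_r$, it suffices to treat the case $i=r-1$, $j=r$; the general case follows by relabelling coordinates. Substituting $X_{r-1}=X_r$ identifies $V_r\cap\{X_{r-1}=X_r\}$ with the affine variety in $\A^{r-1}$ (coordinates $X_1\klk X_{r-1}$) defined by the polynomials $\widetilde R_i(X_1\klk X_{r-1}):=R_i(X_1\klk X_{r-1},X_{r-1})$ for $1\le i\le m$.

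The key observation is that this substitution is again of the form $S_i$ evaluated at elementary symmetric polynomials, but in a slightly modified set of variables. Writing $\widetilde\Pi_k$ for the $k$-th elementary symmetric polynomial of $\fq[X_1\klk X_{r-2},X_{r-1}]$ where $X_{r-1}$ appears with multiplicity two, one checks that $\Pi_k(X_1\klk X_{r-1},X_{r-1})=\widetilde\Pi_k$, and these $\widetilde\Pi_k$ are, up to a linear change expressible through the elementary symmetric polynomials of $X_1\klk X_{r-1}$ together with $X_{r-1}$, governed by the same Vandermonde-type factorization \eqref{eq: factorization Jacobian elem sim pols} with a confluent (Hermite) Vandermonde matrix replacing $A_r$. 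Concretely, the Jacobian of $(\widetilde\Pi_1\klk\widetilde\Pi_{r-1})$ with respect to $(X_1\klk X_{r-1})$ factors as $\widetilde B\cdot\widetilde A$, where $\widetilde B$ is lower-triangular with nonzero determinant and $\widetilde A$ is the confluent Vandermonde matrix with nodes $x_1\klk x_{r-2}$ (simple) and $x_{r-1}$ (double); its determinant vanishes exactly when two of $x_1\klk x_{r-2},x_{r-1}$ coincide. Running the argument of the proof of Theorem \ref{theorem: dimension singular locus V_r} verbatim with $\widetilde A$ in place of $A_r$ shows that the locus in $\A^{r-1}$ where $(\partial\widetilde{\bfs R}/\partial\bfs X)$ drops rank is contained in a finite union of linear subspaces of dimension at most $s-1$, hence of dimension $\le s-1\le r-m-3$. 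Since $\widetilde R_1\klk\widetilde R_m$ are the pullbacks of $S_1\klk S_m$ under the finite morphism $\A^{r-1}\to\A^{r-1}$, $\bfs x\mapsto(\widetilde\Pi_1(\bfs x)\klk\widetilde\Pi_{r-1}(\bfs x))$ (finiteness follows as for $\bfs\Pi^r$), the preimage argument used for Lemma \ref{lemma: V_r is complete inters} gives that $V(\widetilde R_1\klk\widetilde R_j)\subset\A^{r-1}$ has pure dimension $r-1-j$ for $1\le j\le m$; in particular $V_r\cap\{X_{r-1}=X_r\}$ is a set-theoretic complete intersection of pure dimension $r-1-m=r-m-1$, and $\widetilde R_1\klk\widetilde R_m$ form a regular sequence.

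With pure dimension $r-m-1$ established, the degree bound is immediate: by the B\'ezout inequality \eqref{eq: Bezout} we have $\deg(V_r\cap\{X_{r-1}=X_r\})\le\prod_{i=1}^m\deg\widetilde R_i\le\prod_{i=1}^m d_i=\delta$, since $\deg\widetilde R_i\le\deg R_i=d_i$. Then the affine bound \eqref{eq: upper bound -- affine gral} yields $|V_r(\fq)\cap\{X_{r-1}=X_r\}|\le\delta q^{r-m-1}$, and the general case $\{X_i=X_j\}$ follows by the coordinate permutation symmetry of $R_1\klk R_m$.

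The main obstacle I expect is verifying cleanly that the confluent Vandermonde determinant $\det\widetilde A(\bfs x)$ vanishes precisely on a union of codimension-one linear subspaces of the expected shape, so that the Theorem \ref{theorem: dimension singular locus V_r} argument transfers — in particular, pinning down the lower-triangular factor $\widetilde B$ and checking that it stays invertible on all of $\A^{r-1}$, and confirming that dropping from $r$ to $r-1$ variables still leaves enough room ($s\le r-m-2$ gives $s-1\le r-m-3<r-1-m$) for the singular-locus codimension estimate to be nonvacuous. Once that linear-algebra bookkeeping is in place, the rest is a direct transcription of the arguments already given for $V_r$.
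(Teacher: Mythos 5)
Your overall strategy---reduce to $i=r-1$, $j=r$ by symmetry, substitute $X_r\mapsto X_{r-1}$ so that $V_r\cap\{X_{r-1}=X_r\}$ becomes a symmetric system in $\A^{r-1}$, establish pure dimension $r-m-1$, and then apply the B\'ezout inequality \eqref{eq: Bezout} together with \eqref{eq: upper bound -- affine gral}---matches the paper's. But two of the intermediate claims are wrong in a way that matters.

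First, the ``confluent (Hermite) Vandermonde'' picture is not what actually arises. Differentiating $\Pi_i^*=\Pi_i(X_1,\dots,X_{r-1},X_{r-1})$ and using the chain rule, the column of the Jacobian corresponding to $X_j$ for $j\le r-2$ is, after factoring out the lower-triangular matrix $B^*_s$, the ordinary Vandermonde column $(1,x_j,\dots,x_j^{s-1})^T$; the column corresponding to $X_{r-1}$ is $2\,(1,x_{r-1},\dots,x_{r-1}^{s-1})^T$, not the derivative column $(0,1,2x_{r-1},\dots)^T$ that a confluent Vandermonde would have. Consequently your claim that ``its determinant vanishes exactly when two of $x_1,\dots,x_{r-1}$ coincide'' is false in characteristic $2$: there the last column is identically zero, so every maximal minor that uses it vanishes identically, and the rank-drop locus is cut out only by the minors on columns from $\{1,\dots,r-2\}$. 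Those impose coincidences among $x_1,\dots,x_{r-2}$ but leave $X_{r-1}$ free, so the rank-drop locus has dimension $\le s$, not $\le s-1$. (This is exactly why the paper only uses minors on the first $r-2$ columns and settles for the bound $s$, which still satisfies $s<r-m-1$ by the hypothesis $s\le r-m-2$.)

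Second, the appeal to a finite morphism $\A^{r-1}\to\A^{r-1}$, $\bfs x\mapsto(\widetilde\Pi_1(\bfs x),\dots,\widetilde\Pi_{r-1}(\bfs x))$, with ``finiteness follows as for $\bfs\Pi^r$,'' is not justified and in fact fails in some characteristics. The integrality of $X_i$ over $\fq[\Pi_1,\dots,\Pi_r]$ that underlies finiteness of $\bfs\Pi^r$ uses the full degree-$r$ monic relation; after dropping $\widetilde\Pi_r$ one needs $\widetilde\Pi_r$ to be integral over $\fq[\widetilde\Pi_1,\dots,\widetilde\Pi_{r-1}]$, and this can break: for $r=p=\mathrm{char}\,\fq$, the polynomial $T^p+a$ has a repeated root for every $a$, so the fiber of the induced map over the origin contains a whole affine line, and the map is not even quasi-finite. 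The paper avoids this entirely: it gets the lower bound $\dim\mathcal C\ge r-m-1$ for each irreducible component $\mathcal C$ of $V_{r-1,r}$ from Krull (it is cut out by $m$ equations in $\A^{r-1}$), and then uses the rank-drop estimate to find a point of $\mathcal C$ where the Jacobian of the $m$ defining polynomials has full rank, so the tangent space there has dimension $\le r-m-1$, forcing $\dim\mathcal C=r-m-1$. That Jacobian-plus-Krull argument is what you should substitute for the finite-morphism step; once you do, the degree and point-count bounds follow exactly as you wrote.
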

\begin{proof}
Theorem \ref{theorem: proj closure of Vr is abs irred} shows that
$\mathrm{pcl}(V_r)$ is a complete intersection which is regular in
codimension 2. Therefore, by Theorem \ref{theorem: normal complete
int implies irred} we conclude that it is absolutely irreducible.
This implies that $V_r$ is also absolutely irreducible.

Without loss of generality we may assume that $i=r-1$ and $j=r$.
Then $V_{r-1,r}:=V_r\cap \{X_{r-1}=X_r\}$ may be seen as the
subvariety of $\A^{r-1}$ defined by the polynomials
$R_1(\Pi_{1}^*,\ldots,\Pi_s^*)\klk R_m(\Pi_{1}^*,\ldots,\Pi_s^*)\in
\fq[X_1,\ldots,X_{r-1}]$, where $\Pi_i^*:=\Pi_{i}(X_1,\ldots,
X_{r-1},X_{r-1})$ is the polynomial obtained by substituting
$X_{r-1}$ for $X_r$ in the $i$th elementary symmetric polynomial
$\Pi_i$ of $\fq[X_1,\ldots, X_r]$. Observe that
\begin{equation}\label{eq: pi_i en x_(r-1) igual x_r}
\Pi_{i}^* = \Pi_i^{r-2} + 2X_{r-1}\cdot \Pi_{i-1}^{r-2} + X_{r-1}^2
\cdot \Pi_{i-2}^{r-2}
\end{equation}
where $\Pi_j^{r-2}$ is the $j$th elementary symmetric polynomial of
$\fq[X_1,\ldots, X_{r-2}]$ for $1\le j\le s$.

Let $\bfs \Pi^*:=(\Pi_1^*\klk\Pi_s^*)$ and denote by
$(\partial\bfs\Pi^*/\partial\bfs X^*)$ the Jacobian matrix
$\bfs\Pi^*$ with respect to $X_1\klk X_{r-1}$. We observe that the
set of points $\bfs x$ of $V_{r-1,r}$ for which the Jacobian matrix
$\big(\partial(\bfs R\circ\bfs\Pi^*)/\partial\bfs X^*\big)(\bfs x)$
does not have maximal rank, has dimension at most $s$. Indeed, from
(\ref{eq: pi_i en x_(r-1) igual x_r}) we conclude that the nonzero
$(s\times s)$--minor of the Jacobian matrix
$(\partial{\bfs\Pi^*}/\partial\bfs X^*)$ determined by any choice
$i_1\klk i_s$ of columns with $1 \leq i_1 < i_2 < \cdots < i_s \leq
r-2$ equals the corresponding nonzero minor of
$(\partial{\Pi_{i}^{r-2}}/\partial{X_j})_{1\le i\le s,1\le j\le
r-1}$. In particular, each such nonzero maximal minor of
$(\partial{\Pi_{i}^*}/\partial{X_j})_{1\le i\le s,1\le j\le r-2}$
is, up to a sign, a Vandermonde determinant depending on $s$ of the
indeterminates $X_1, \ldots, X_{r-2}$. Arguing as in the proof of
Theorem \ref{theorem: dimension singular locus V_r} we deduce that
the set of points $\bfs x$ of $V_{r-1,r}$ for which the Jacobian
matrix $\big(\partial(\bfs R\circ\bfs\Pi^*)/\partial\bfs
X^*\big)(\bfs x)$ does not have maximal rank, is included in a union
of linear varieties of dimension $s$ (see Remark \ref{remark:
sigma_r included union linear vars}), and hence it has dimension at
most $s$.

Let $\mathcal{C}$ be any irreducible component of $V_{r-1,r}$. Then
$\mathcal{C}$ has dimension at least $r-m-1$. Since $r-m-1-s\ge 1$,
for a generic point $\bfs x\in\mathcal{C}$ the Jacobian matrix
$\big(\partial(\bfs R\circ\bfs\Pi^*)/\partial\bfs X^*\big)(\bfs x)$
has maximal rank $m$. We conclude that the tangent space of
$V_{r-1,r}$ at $\bfs x$ has dimension at most $r-m-1$, which implies
that $\mathcal{C}$ has dimension $r-m-1$. As a consequence,
$V_{r-1,r}$ is of pure dimension $r-m-1$, finishing thus the proof
of the first assertion.

On the other hand, from the B\'ezout inequality (\ref{eq: Bezout})
it follows that $ \deg V_r\cap\{X_i=X_j\} \le\deg V_r$. Then the
second assertion readily follows from \eqref{eq: upper bound --
affine gral}.
\end{proof}
Let $\mathcal{I}$ be a subset of the set $\{(i,j):1\le i<j\le r\}$
and $V_r^{=}\subset\A^r$ the variety defined as
$$V_r^{=}:=\bigcup_{(i,j)\in\mathcal{I}}V_r\cap\{X_i=X_j\}.$$
%
Finally, denote $V_r^{\not=}:=V_r\setminus V_r^=$. We have the
following result.
\begin{corollary}
\label{coro: nb point V_r distinct coordinates} With notations and
assumptions as in Theorem \ref{theorem: nb point V_r}, we have
$$\big||V_r^{\not=}(\fq)|-q^{r-m}\big|\le  14 D^3 \delta^2(q+1)q^{r-m-2}
+|\mathcal{I}|\delta q^{r-m-1}.$$
\end{corollary}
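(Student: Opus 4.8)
The plan is to bound $|V_r^{\not=}(\fq)|$ by combining the estimate for $|V_r(\fq)|$ from Theorem~\ref{theorem: nb point V_r} with the upper bound on $|V_r(\fq)\cap\{X_i=X_j\}|$ from Theorem~\ref{theorem: nb point V_r distinct coordinates}, applied to each pair in $\mathcal{I}$. First I would write the set-theoretic identity
$$V_r^{\not=}(\fq)=V_r(\fq)\setminus V_r^=(\fq),$$
so that $|V_r^{\not=}(\fq)|=|V_r(\fq)|-|V_r^=(\fq)|$, and hence
$$\big||V_r^{\not=}(\fq)|-q^{r-m}\big|\le \big||V_r(\fq)|-q^{r-m}\big|+|V_r^=(\fq)|.$$
The first term on the right is at most $14 D^3\delta^2(q+1)q^{r-m-2}$ by Theorem~\ref{theorem: nb point V_r}.

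For the second term, the union bound gives
$$|V_r^=(\fq)|\le\sum_{(i,j)\in\mathcal{I}}|V_r(\fq)\cap\{X_i=X_j\}|.$$
By Theorem~\ref{theorem: nb point V_r distinct coordinates}, each summand is at most $\delta q^{r-m-1}$ (using the degree bound $\deg V_r\le\delta=\prod_{i=1}^m d_i$ from Corollary~\ref{coro: radicality and degree Vr} together with \eqref{eq: upper bound -- affine gral}). Since there are $|\mathcal{I}|$ terms, this yields $|V_r^=(\fq)|\le|\mathcal{I}|\delta q^{r-m-1}$. Adding the two contributions gives exactly the claimed inequality.

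There is essentially no obstacle here: the corollary is a routine assembly of the two preceding results via the crude union bound, and the only mild care needed is to note that $V_r^=(\fq)$ is genuinely $V_r^=\cap\A^r(\fq)$ so that the inclusion–exclusion at the level of $\fq$-points is valid and the union bound over the pairs in $\mathcal{I}$ covers it. One could in principle sharpen the estimate by inclusion–exclusion over intersections of several $\{X_i=X_j\}$, but this is unnecessary for the intended applications, so I would not pursue it.
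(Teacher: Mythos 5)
Your proof is correct and follows essentially the same route as the paper: both apply the triangle inequality to split off $|V_r^=(\fq)|$, bound that term by a union bound over $\mathcal{I}$ together with Theorem~\ref{theorem: nb point V_r distinct coordinates}, and bound the main term by Theorem~\ref{theorem: nb point V_r}. No discrepancies.
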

\begin{proof}
According to Theorem \ref{theorem: nb point V_r distinct
coordinates},
$$|V_r^=(\fq)|\le \sum_{(i,j)\in\mathcal{I}}\delta q^{r-m-1}
\le |\mathcal{I}|\delta q^{r-m-1}.$$
Therefore, by Theorem \ref{theorem: nb point V_r} it follows that
\begin{align*}
\big||V_r^{\not=}(\fq)|-q^{r-m}\big|&\le
\big||V_r(\fq)|-q^{r-m}\big|+ |V_r^{=}(\fq)|\\
&\le 14 D^3 \delta^2(q+1)q^{r-m-2}+|\mathcal{I}|\delta q^{r-m-1}.
\end{align*}
This finishes the proof of the corollary.
\end{proof}
%
%
%
\section{The distribution of factorization patterns}
\label{section: distribution of fact patterns}
This section is devoted to the first application of the framework of
the previous section, namely we obtain an estimate on the number
$|\mathcal{A}_{\bfs\lambda}|$ of elements on a linear family
$\mathcal{A}$ of monic polynomials of $\fq[T]$ of degree $n$ having
factorization pattern $\bfs\lambda:=1^{\lambda_1}2^{\lambda_2}\cdots
n^{\lambda_n}$. Our estimate asserts that
$|\mathcal{A}_{\bfs\lambda}|=
\mathcal{T}(\bfs\lambda)\,q^{n-m}+\mathcal{O}(q^{n-m-1})$, where
$\mathcal{T}(\bfs\lambda)$ is the proportion of elements of the
symmetric group of $n$ elements with cycle pattern $\bfs\lambda$ and
$m$ is the codimension of $\mathcal{A}$. To this end, we reduce the
question to estimate the number of $\fq$--rational points with
pairwise--distinct coordinates of a certain family of complete
intersections defined over $\fq$. These complete intersections are
defined by symmetric polynomials which satisfy the hypotheses of
Theorem \ref{theorem: nb point V_r} and Corollary \ref{coro: nb
point V_r distinct coordinates}.

Let $T$ be an indeterminate over $\cfq$. For a positive integer $n$,
let $\mathcal{P}:={\mathcal P}_n$ be the set of all monic
polynomials in $\fq[T]$ of degree $n$. Let
$\lambda_1,\dots,\lambda_n$ be nonnegative integers such that
$$\lambda_1+2\lambda_2+\cdots+n\lambda_n=n.$$
We denote by ${\mathcal P}_{\bfs \lambda}$ the set of $f\in
\mathcal{P}$ with factorization pattern $\bfs
\lambda:=1^{\lambda_1}2^{\lambda_2}\cdots n^{\lambda_n}$, namely the
elements $f\in \mathcal{P}$ having exactly $\lambda_i$ monic
irreducible factors over $\fq$ of degree $i$ (counted with
multiplicity) for $1\le i\le n$. Further, for any subset
$\mathcal{S}\subset\mathcal{P}$ we shall denote
$\mathcal{S}_{\bfs{\lambda}}:=\mathcal{S}\cap\mathcal{P}_{\bfs
\lambda}$.

In \cite{Cohen70},  S. Cohen showed that the proportion of elements
of $\mathcal{P}_{\bfs\lambda}$ in $\mathcal{P}$ is roughly the
proportion $\mathcal{T}({\bfs{\lambda}})$ of permutations with cycle
pattern $\bfs \lambda$ in the $n$th symmetric group $\mathbb{S}_n$,
where a permutation of $\mathbb{S}_n$ is said to have cycle pattern
$\bfs\lambda$ if it has exactly $\lambda_i$ cycles of length $i$ for
$1\le i\le n$. More precisely, Cohen proved that
$$
|\mathcal{P}_{\bfs\lambda}|=\mathcal{T}(\bfs\lambda)\,q^n+
\mathcal{O}(q^{n-1}),
$$
where the constant underlying the $\mathcal{O}$--notation depends
only on $\bfs \lambda$. Observe that the number of permutations in
$\mathbb{S}_n$ with cycle pattern $\bfs\lambda$ is $n!/w({\bfs
\lambda})$, where
$$w({\bfs \lambda}):=1^{\lambda_1}2^{\lambda_2}\dots n^{\lambda_n}
\lambda_1!\lambda_2!\dots\lambda_n!.$$
In particular, $\mathcal{T}({\bfs{\lambda}})={1}/{w({\bfs
\lambda})}$.

Further, in \cite{Cohen72} Cohen called
$\mathcal{S}\subset\mathcal{P}$ {\em uniformly distributed} if the
proportion $|\mathcal{S}_{\bfs\lambda}|/|\mathcal{S}|$ is roughly
$\mathcal{T}(\bfs\lambda)$ for every factorization pattern
$\bfs\lambda$. The main result of this paper (\cite[Theorem
3]{Cohen72}) provides a criterion for a linear family $\mathcal{S}$
of polynomials of $\mathcal{P}$ to be uniformly distributed in the
sense above. For any such linear family $\mathcal{S}$ of codimension
$m \le n-2$, assuming that the characteristic $p$ of $\fq$ is
greater than $n$, it is shown that
\begin{equation}\label{eq: intro: Cohen72}
|\mathcal{S}_{\bfs\lambda}|=\mathcal{T}(\bfs\lambda)\,q^{n-m}+
\mathcal{O}(q^{n-m-\frac{1}{2}}).
\end{equation}
A difficulty with \cite[Theorem 3]{Cohen72} is that the hypotheses
for a linear family of $\mathcal{P}$ to be uniformly distributed
seem complicated and not easy to verify. We would also like to
improve the asymptotic behavior of the error term
$\mathcal{O}(q^{n-m-{1}/{2}})$ and remove restrictions on the
characteristic of $\fq$. Finally, we are interested in explicit
estimates, that is, an explicit admissible expression for the
$\mathcal{O}$--constant in (\ref{eq: intro: Cohen72}).

For this purpose, we shall consider the linear families in
$\mathcal{P}$ that we now describe. Let $m$, $s$ be positive
integers with $q >n$ and $m \leq s\leq n-m-2$, let $A_{n-s},\ldots,
A_{n-1}$ be indeterminates over $\cfq$ and let $L_1,\ldots, L_m$ be
linear forms of $\fq[A_ {n-s},\ldots,A_{n-1}]$ which are linearly
independent. For $\bfs \alpha:=(\alpha_1,\ldots,\alpha_m)\in \fq^m$,
we set $\bfs{L}:=(L_1,\ldots,L_m)$ and we consider the linear
variety $\mathcal{A}:=\mathcal{A}(\bfs{L},\bfs\alpha)$ defined in
the following way:
\begin{equation}\label{eq: intro: definition A}
\mathcal{A}:=\left\{T^n+a_{n-1}T^{n-1}\plp a_0\in\mathcal{P}: \bfs
L(a_{n-s}\klk a_{n-1})+\bfs \alpha=\bfs 0\right\}.
\end{equation}
We may assume without loss of generality that the Jacobian matrix
$(\partial \bfs L /\partial \bfs A)$ is lower triangular in row
echelon form and denote by $1 \leq i_1 < \ldots < i_m \leq s$ the
positions corresponding to the pivots. Given a factorization pattern
$\bfs \lambda:= 1^{\lambda_1}\ldots n^{\lambda_n}$, our goal is to
estimate the number $|\mathcal{A}_{\bfs\lambda}|$ of elements in
$\mathcal{A}$ with factorization pattern $\bfs \lambda$. 
%
%
%
%
\subsection{Factorization patterns and roots}
Let $\mathcal{A}\subset\mathcal{P}:=\mathcal{P}_n$ be the linear
family of \eqref{eq: intro: definition A} and
$\bfs\lambda:=1^{\lambda_1}\cdots n^{\lambda_n}$ a factorization
pattern. Following the approach of \cite{CeMaPe15}, we shall show
that the condition that an element of $\mathcal{A}$ has
factorization pattern $\boldsymbol{\lambda}$ can be expressed in
terms of certain elementary symmetric polynomials.

Let $f$ be an element of $\mathcal{P}$ and $g\in \fq[T]$ a monic
irreducible factor of $f$ of degree $i$. Then $g$ is the minimal
polynomial of a root $\alpha$ of $f$ with $\fq(\alpha)=\fqi$. Denote
by $\mathbb G_i$ the Galois group $\mbox{Gal}(\fqi,\fq)$ of $\fqi$
over $\fq$. We may express $g$ in the following way:
$$g=\prod_{\sigma\in\mathbb G_i}(T-\sigma(\alpha)).$$
Hence, each irreducible factor $g$ of $f$ is uniquely determined by
a root $\alpha$ of $f$ (and its orbit under the action of the Galois
group of $\cfq$ over $\fq$), and this root belongs to a field
extension of $\fq$ of degree $\deg g$. Now, for
$f\in\mathcal{P}_{\bfs \lambda}$, there are $\lambda_1$ roots of $f$
in $\fq$, say $\alpha_1,\dots,\alpha_{\lambda_1}$ (counted with
multiplicity), which are associated with the irreducible factors of
$f$ in $\fq[T]$ of degree 1; we may choose $\lambda_2$ roots of $f$
in $\fqtwo\setminus\fq$ (counted with multiplicity), say
$\alpha_{\lambda_1+1},\dots, \alpha_{\lambda_1+\lambda_2}$, which
are associated with the $\lambda_2$ irreducible factors of $f$ of
degree 2, and so on. From now on we shall assume that a choice of
$\lambda_1\plp\lambda_n$ roots $\alpha_1\klk\alpha_{\lambda_1
\plp\lambda_n}$ of $f$ in $\cfq$ is made in such a way that each
monic irreducible factor of $f$ in $\fq[T]$ is associated with one
and only one of these roots.

Our aim is to express the factorization of $f$ into irreducible
factors in $\fq[T]$ in terms of the coordinates of the chosen
$\lambda_1\plp \lambda_n$ roots of $f$ with respect to certain bases
of the corresponding extensions $\fq\hookrightarrow\fqi$ as
$\fq$--vector spaces. To this end, we express the root associated
with each irreducible factor of $f$ of degree $i$ in a normal basis
$\Theta_i$ of the field extension $\fq\hookrightarrow \fqi$.

Let $\theta_i\in \fqi$ be a normal element and $\Theta_i$ the normal
basis of the extension $\fq\hookrightarrow\fqi$ generated by
$\theta_i$, i.e.,
$$\Theta_i=\left \{\theta_i,\cdots, \theta_i^{q^{i-1}}\right\}.$$
Observe that the Galois group $\mathbb G_i$ is cyclic and the
Frobenius map $\sigma_i:\fqi\to\fqi$, $\sigma_i(x):=x^q$ is a
generator of $\mathbb{G}_i$. Thus, the coordinates in the basis
$\Theta_i$ of all the elements in the orbit of a root
$\alpha_k\in\fqi$ of an irreducible factor of $f$ of degree $i$ are
the cyclic permutations of the coordinates of $\alpha_k$ in the
basis $\Theta_i$.

The vector that gathers the coordinates of all the roots
$\alpha_1\klk\alpha_{\lambda_1+\dots+\lambda_n}$ we chose to
represent the irreducible factors of $f$ in the normal bases
$\Theta_1\klk \Theta_n$ is an element of $\fq^n$, which is denoted
by ${\bfs x}:=(x_1,\dots,x_n)$. Set
\begin{equation}\label{eq: fact patterns: ell_ij}
\ell_{i,j}:=\sum_{k=1}^{i-1}k\lambda_k+(j-1)\,i
\end{equation}
for $1\le j \le \lambda_i$ and $1\le i \le n$. Observe that the
vector of coordinates of a root
$\alpha_{\lambda_1\plp\lambda_{i-1}+j}\in\fqi$ is the sub-array
$(x_{\ell_{i,j}+1},\dots,x_{\ell_{i,j}+i})$ of $\bfs x$. With these
notations, the $\lambda_i$ irreducible factors of $f$ of degree $i$
are the polynomials
\begin{equation}\label{eq: fact patterns: gij}g_{i,j}=\prod_{\sigma\in\mathbb G_i}
\Big(T-\big(x_{\ell_{i,j}+1}\sigma(\theta_i)+\dots+
x_{\ell_{i,j}+i}\sigma(\theta_i^{q^{i-1}})\big)\Big)
\end{equation}
for $1\le j \le \lambda_i$. In particular,
\begin{equation}\label{eq: fact patterns: f factored with g_ij}
f=\prod_{i=1}^n\prod_{j=1}^{\lambda_i}g_{i,j}.
\end{equation}

Let $X_1\klk X_n$ be indeterminates over $\cfq$, set $\bfs
X:=(X_1,\dots,X_n)$ and consider the polynomial $G\in
\fq[\bfs{X},T]$ defined as
\begin{equation}\label{eq: fact patterns: pol G}
G:=\prod_{i=1}^n\prod_{j=1}^{\lambda_i}G_{i,j},\quad
G_{i,j}:=\prod_{\sigma\in\mathbb G_i}
\Big(T-\big(X_{\ell_{i,j}+1}\sigma(\theta_i)+
\dots+X_{\ell_{i,j}+i}\sigma(\theta_i^{q^{i-1}})\big)\Big),
\end{equation}
where the $\ell_{i,j}$ are defined as in \eqref{eq: fact patterns:
    ell_ij}. Our previous arguments show that $f\in\mathcal{P}$ has
factorization pattern ${\bfs \lambda}$ if and only if there exists
$\bfs x\in\fq^n$ with $f=G({\bfs x},T)$.

Next we discuss how many elements $\bfs x\in\fq^n$ yield an
arbitrary polynomial $f=G(\bfs x,T)\in\mathcal{P}_{\bfs\lambda}$.
For $\alpha\in\fqi$, we have $\fq(\alpha)=\fqi$ if and only if its
orbit under the action of the Galois group $\G_i$ has exactly $i$
elements. In particular, if $\alpha$ is expressed by its coordinate
vector $\bfs x\in\fq^i$ in the normal basis $\Theta_i$, then the
coordinate vectors of the elements of the orbit of $\alpha$ form a
cycle of length $i$, because the Frobenius map $\sigma_i\in{\mathbb
    G}_i$ permutes cyclically the coordinates. As a consequence, there
is a bijection between cycles of length $i$ in $\fq^i$ and elements
$\alpha\in\fqi$ with $\fq(\alpha)=\fqi$.

To make this relation more precise, we introduce the notion of an
array of type $\bfs \lambda$. Let $\ell_{i,j}$ $(1\le i\le n,\ 1\le
j\le\lambda_i)$ be defined as in \eqref{eq: fact patterns: ell_ij}.
We say that ${\bfs x}=(x_1,\dots, x_n)\in\fq^n$ is of {\em type
$\bfs \lambda$} if and only if each sub-array $\bfs
x_{i,j}:=(x_{\ell_{i,j}+1},\dots,x_{\ell_{i,j}+i})$ is a cycle of
length $i$. The following result relates the quantity
$\mathcal{P}_{\boldsymbol{\lambda}}$ with the set of elements of
$\fq^n$ of type $\bfs \lambda$. The proof of this result is only
sketched here (see \cite[Lemma 2.2]{CeMaPe15} for a full proof).
\begin{lemma}
    \label{lemma: fact patterns: G(x,T) with fact pat lambda}

 For any
    ${\bfs x}=(x_1,\dots, x_n)\in \fq^n$, the polynomial $f:=G({\bfs x},T)$
    has factorization pattern $\bfs \lambda$ if and only if ${\bfs x}$
    is of type $\bfs \lambda$. Furthermore, for each square--free
    polynomial $f\in \mathcal{P}_{\bfs \lambda}$ there are $w({\bfs
        \lambda}):=\prod_{i=1}^n i^{\lambda_i}\lambda_i!$ different ${\bfs
        x}\in \fq^n $ with $f=G({\bfs x},T)$.
\end{lemma}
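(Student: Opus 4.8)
The plan is to prove both assertions of Lemma~\ref{lemma: fact patterns: G(x,T) with fact pat lambda} by unwinding the constructions preceding it. First I would establish the equivalence between having factorization pattern $\bfs\lambda$ and being of type $\bfs\lambda$. The factorization \eqref{eq: fact patterns: f factored with g_ij} of $f=G(\bfs x,T)$ exhibits $f$ as a product of the polynomials $g_{i,j}$ of \eqref{eq: fact patterns: gij}, each of degree at most $i$ (it has degree exactly $i$ precisely when the orbit of the corresponding root $\alpha_{i,j}:=x_{\ell_{i,j}+1}\theta_i+\dots+x_{\ell_{i,j}+i}\theta_i^{q^{i-1}}$ under $\mathbb G_i$ has $i$ elements). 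The key observation, already recorded in the discussion above, is that since $\sigma_i\in\mathbb G_i$ permutes the coordinates of $\alpha_{i,j}$ in the normal basis $\Theta_i$ cyclically, the orbit of $\alpha_{i,j}$ has exactly $i$ elements if and only if the sub-array $\bfs x_{i,j}=(x_{\ell_{i,j}+1},\dots,x_{\ell_{i,j}+i})$ is a cycle of length $i$; moreover in that case $g_{i,j}$ is the minimal polynomial over $\fq$ of $\alpha_{i,j}$, hence monic irreducible of degree $i$. Thus, if $\bfs x$ is of type $\bfs\lambda$, every $g_{i,j}$ is monic irreducible of degree $i$ and \eqref{eq: fact patterns: f factored with g_ij} shows $f$ has exactly $\lambda_i$ irreducible factors of degree $i$ for each $i$ (counted with multiplicity), i.e.\ factorization pattern $\bfs\lambda$.

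For the converse I would argue by contraposition: if some sub-array $\bfs x_{i,j}$ is not a cycle of length $i$, then $\fq(\alpha_{i,j})=\fq_{q^{i'}}$ for some proper divisor $i'$ of $i$, so $\deg g_{i,j}=i'<i$; one then checks via \eqref{eq: fact patterns: f factored with g_ij} and the degree bookkeeping $\sum i\lambda_i=n=\deg f$ that the multiset of degrees of the irreducible factors of $f$ cannot be $1^{\lambda_1}\cdots n^{\lambda_n}$, so $f$ does not have factorization pattern $\bfs\lambda$. This direction needs a little care because a degenerate sub-array could in principle produce an irreducible factor whose degree coincidentally matches a different part of $\bfs\lambda$; the cleanest way is to compare the total degree contributed at each ``level'' or simply to note that a type-$\bfs\lambda$ array is forced once the factorization pattern is $\bfs\lambda$ and $f$ is squarefree, deferring the subtle multiplicity issues to the full proof in \cite[Lemma 2.2]{CeMaPe15}, as the statement allows.

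For the second assertion, fix a squarefree $f\in\mathcal{P}_{\bfs\lambda}$ and count the $\bfs x\in\fq^n$ with $G(\bfs x,T)=f$. By the first part every such $\bfs x$ is of type $\bfs\lambda$, so it is assembled from sub-arrays $\bfs x_{i,j}$, each a cycle of length $i$ encoding (in the basis $\Theta_i$) a root of one of the degree-$i$ irreducible factors of $f$. Since $f$ is squarefree its $\lambda_i$ monic irreducible factors of degree $i$ are pairwise distinct; choosing $\bfs x$ amounts to (a) assigning to each of the $\lambda_i$ slots $j=1,\dots,\lambda_i$ one of these $\lambda_i$ factors — there are $\lambda_i!$ such assignments — and (b) for the factor assigned to slot $(i,j)$, choosing which of its $i$ roots is represented and at which cyclic shift its coordinate vector is written, i.e.\ choosing one element of the corresponding length-$i$ cycle — there are $i$ choices. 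These choices are independent across all $(i,j)$, and the correspondence between roots and their normal-basis coordinate cycles is a bijection (using that $\Theta_i$ is a basis of $\fqi/\fq$), so the total count is $\prod_{i=1}^n i^{\lambda_i}\lambda_i!=w(\bfs\lambda)$.

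The main obstacle I expect is the converse direction of the first assertion: ruling out that a ``defective'' sub-array (one that is not a full-length cycle, producing a root in a proper subfield and hence a lower-degree irreducible factor, possibly with multiplicity) could nonetheless leave the overall multiset of factor degrees equal to $1^{\lambda_1}\cdots n^{\lambda_n}$. Handling this rigorously requires tracking how a single defective slot perturbs the degree multiset and invoking $\sum_i i\lambda_i=n$ together with $\deg f=n$ to derive a contradiction; since the statement explicitly points to \cite[Lemma 2.2]{CeMaPe15} for the complete argument, I would present the forward direction and the counting in full and indicate the converse via this degree-comparison sketch.
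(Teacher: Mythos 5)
Your argument is correct and essentially matches the paper's sketch: both rest on the equivalence between a sub-array being a length-$i$ cycle in $\fq^i$ and the corresponding root generating $\fqi$ over $\fq$ (via the Frobenius permuting normal-basis coordinates cyclically), both count preimages of a square-free $f\in\mathcal{P}_{\bfs\lambda}$ as $\prod_i i^{\lambda_i}\lambda_i!$ by combining a choice of root/cyclic shift in each slot with a permutation of the $\lambda_i$ slots, and both defer the detailed degree bookkeeping to \cite[Lemma 2.2]{CeMaPe15}. The only difference is presentational: you spell out (type $\bfs\lambda \Rightarrow$ pattern $\bfs\lambda$) and defer the converse, whereas the paper sketches (pattern $\Rightarrow$ type) and treats the reverse implication as immediate.
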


\begin{proof}[Sketch of proof]
    Let $\Theta_1,\dots,\Theta_n$ be the normal bases introduced before.
    Each $\bfs x\in \fq^n$ is associated with a unique finite sequence
    of elements $\alpha_k$ $(1\le k\le \lambda_1+\dots+\lambda_n)$ as
    follows: each $\alpha_{\lambda_1\plp\lambda_{i-1}+j}$ with $1\le
    j\le\lambda_i$ is the element of $\fqi$ whose coordinate vector in
    the basis $\Theta_i$ is the sub-array $(x_{\ell_{i,j}+1},
    \dots,x_{\ell_{i,j}+i})$ of $\bfs x$.
    Suppose that $G({\bfs x},T)$ has factorization pattern $\bfs
    \lambda$ for a given $\bfs x\in\fq^n$. Fix $(i,j)$ with $1\le i\le
    n$ and $1\le j\le\lambda_i$. Then $G({\bfs x},T)$ is factored as in
    \eqref{eq: fact patterns: gij}--\eqref{eq: fact patterns: f factored
        with g_ij}, where each $g_{i,j}\in\fq[T]$ is irreducible, and hence
    $\fq(\alpha_{\lambda_1\plp\lambda_{i-1}+j})=\fqi$. We conclude that
    the sub-array $(x_{\ell_{i,j}+1}, \dots,x_{\ell_{i,j}+i})$ defining
    $\alpha_{\lambda_1\plp\lambda_{i-1}+j}$  is a cycle of length $i$.
    This proves that $\bfs x$ is of type $\bfs\lambda$.
%

    Furthermore, for $\bfs x\in\fq^n$ of type $\bfs\lambda$, the
    polynomial $f:=G(\bfs x,T)\in\mathcal{P}_{\bfs\lambda}$ is
    square--free if and only if all the roots
    $\alpha_{\lambda_1\plp\lambda_{i-1}+j}$ with $1\le j\le \lambda_i$
    are pairwise--distinct, non--conjugated elements of $\fqi$. This
    implies that no cyclic permutation of a sub-array
    $(x_{\ell_{i,j}+1}, \dots,x_{\ell_{i,j}+i})$ with $1\le
    j\le\lambda_i$ agrees with another cyclic permutation of another
    sub-array $(x_{\ell_{i,j'}+1}, \dots,x_{\ell_{i,j'}+i})$. As cyclic
    permutations of any of these sub-arrays and permutations of these
    sub-arrays yield elements of $\fq^n$ associated with the same
    polynomial $f$, there are $w({\bfs
        \lambda}):=\prod_{i=1}^n i^{\lambda_i}\lambda_i!$ different elements
    $\bfs x\in\fq^n$ with $f=G(\bfs x,T)$.
\end{proof}

Consider the polynomial $G$ defined in \eqref{eq: fact patterns: pol
G} as an element of $\fq[\bfs X][T]$. We shall express the
coefficients of $G$ by means of the vector of linear forms $\bfs
Y:=(Y_1\klk Y_n)$, with $Y_i\in\cfq[\bfs X]$ for $1\le i\le n$,
defined in the following way:
\begin{equation}\label{eq: fact patterns: def linear forms Y}
(Y_{\ell_{i,j}+1},\dots,Y_{\ell_{i,j}+i})^{t}:=A_{i}\cdot
(X_{\ell_{i,j}+1},\dots, X_{\ell_{i,j}+i})^{t} \quad(1\le j\le
\lambda_i,\ 1\le i\le n),
\end{equation}
where $A_i\in\fqi^{i\times i}$ is the matrix
$$A_i:=\left(\sigma(\theta_i^{q^{h}})\right)_{\sigma\in {\mathbb G}_i,\, 0\le h\le i-1}.$$
According to (\ref{eq: fact patterns: pol G}), we may express the
polynomial $G$ as
$$G=\prod_{i=1}^n\prod_{j=1}^{\lambda_i}\prod_{k=1}^i(T-Y_{\ell_{i,j}+k})=
\prod_{k=1}^n(T-Y_k)=T^n+\sum_{k=1}^n(-1)^k\,(\Pi_k(\bfs Y))\,
T^{n-k},$$
where $\Pi_1(\bfs Y)\klk \Pi_n(\bfs Y)$ are the elementary symmetric
polynomials of $\fq[\bfs Y]$. By the expression of $G$ in \eqref{eq:
    fact patterns: pol G} we deduce that $G$ belongs to $\fq[{\bfs
    X},T]$, which in particular implies that $\Pi_k(\bfs Y)$ belongs to
$\fq[{\bfs X}]$ for $1\le k\le n$. Combining these arguments with
Lemma \ref{lemma: fact patterns: G(x,T) with fact pat lambda} we
obtain the following result.
\begin{lemma}
    \label{lemma: fact patterns: sym pols and pattern lambda} A
    polynomial $f:=T^n+a_{n-1}T^{n-1}\plp a_0\in\mathcal{P}$ has
    factorization pattern $\bfs \lambda$ if and only if there exists
    $\bfs{x}\in\fq^n$ of type $\bfs \lambda$ such that
    \begin{equation}\label{eq: fact patterns: sym pols and pattern lambda}
    a_k= (-1)^{{n-k}}\,\Pi_{n-k}(\bfs Y(\bfs x)) \quad(0\le k\le n-1).
    \end{equation}
    In particular, for $f$ square--free, there are $w(\bfs \lambda)$
    elements $\bfs x$ for which (\ref{eq: fact patterns: sym pols and
        pattern lambda}) holds.
\end{lemma}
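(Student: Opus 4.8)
The plan is to combine Lemma \ref{lemma: fact patterns: G(x,T) with fact pat lambda} with the expression of the polynomial $G$ of \eqref{eq: fact patterns: pol G} in terms of elementary symmetric polynomials established immediately above. Recall that, writing $\bfs Y(\bfs x)$ for the evaluation at $\bfs X=\bfs x$ of the linear forms $\bfs Y$ of \eqref{eq: fact patterns: def linear forms Y}, we have shown $G=\prod_{k=1}^n(T-Y_k)=T^n+\sum_{k=1}^n(-1)^k\Pi_k(\bfs Y)\,T^{n-k}$, and that each $\Pi_k(\bfs Y)$ lies in $\fq[\bfs X]$; hence for every $\bfs x\in\fq^n$ the polynomial $G(\bfs x,T)$ is a monic element of $\mathcal P$ whose coefficient of $T^{n-k}$ equals $(-1)^k\Pi_k(\bfs Y(\bfs x))$ for $1\le k\le n$.

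First I would record the elementary observation that, for $f=T^n+a_{n-1}T^{n-1}\plp a_0\in\mathcal P$ and $\bfs x\in\fq^n$, the identity $f=G(\bfs x,T)$ holds if and only if the coefficients of $T^{n-k}$ on both sides coincide for $1\le k\le n$ (the coefficient of $T^n$ agreeing automatically, both polynomials being monic), that is, if and only if $a_{n-k}=(-1)^k\Pi_k(\bfs Y(\bfs x))$ for $1\le k\le n$. Setting $j:=n-k$ and noting $(-1)^k=(-1)^{n-j}$, this system of equalities is precisely \eqref{eq: fact patterns: sym pols and pattern lambda}.

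Next I would invoke Lemma \ref{lemma: fact patterns: G(x,T) with fact pat lambda}, according to which, for $\bfs x\in\fq^n$, the polynomial $G(\bfs x,T)$ has factorization pattern $\bfs\lambda$ if and only if $\bfs x$ is of type $\bfs\lambda$. Combining this with the previous paragraph, $f$ has factorization pattern $\bfs\lambda$ if and only if there exists $\bfs x\in\fq^n$ with $f=G(\bfs x,T)$ such that $G(\bfs x,T)$ has pattern $\bfs\lambda$, i.e.\ if and only if there exists $\bfs x$ of type $\bfs\lambda$ satisfying \eqref{eq: fact patterns: sym pols and pattern lambda}; this gives the first assertion. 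For the count, the $\bfs x\in\fq^n$ realizing \eqref{eq: fact patterns: sym pols and pattern lambda} are exactly the $\bfs x$ with $f=G(\bfs x,T)$, so when $f$ is square--free the second part of Lemma \ref{lemma: fact patterns: G(x,T) with fact pat lambda} shows there are exactly $w(\bfs\lambda)$ of them.

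Since each step is a direct translation, there is essentially no real obstacle; the only points demanding a little care are the index shift $k\leftrightarrow n-k$ together with the sign bookkeeping $(-1)^k=(-1)^{n-j}$, and the (already noted) fact that $\Pi_k(\bfs Y)\in\fq[\bfs X]$, which is what makes the evaluation at an $\fq$--rational point $\bfs x$ meaningful over $\fq$.
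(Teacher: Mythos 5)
Your argument is correct and is essentially the paper's intended proof: expand $G=T^n+\sum_{k=1}^n(-1)^k\Pi_k(\bfs Y)\,T^{n-k}$, observe that $f=G(\bfs x,T)$ is equivalent (after the reindexing $j=n-k$ and the sign bookkeeping $(-1)^k=(-1)^{n-j}$) to \eqref{eq: fact patterns: sym pols and pattern lambda}, and then invoke Lemma \ref{lemma: fact patterns: G(x,T) with fact pat lambda} both for the type--$\bfs\lambda$ characterization and for the $w(\bfs\lambda)$--count in the square--free case. One small point of exposition: the forward implication, that $f\in\mathcal P_{\bfs\lambda}$ can be written as $G(\bfs x,T)$ for some $\bfs x\in\fq^n$, does not follow from Lemma \ref{lemma: fact patterns: G(x,T) with fact pat lambda} together with the coefficient identity alone; it rests on the surjectivity observation made in the paper just before that lemma (namely, choosing one root per irreducible factor of $f$ and recording its coordinates in the normal basis $\Theta_i$ yields such an $\bfs x$), which your phrase ``combining this with the previous paragraph'' uses implicitly but would be better to cite explicitly.
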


As a consequence, we may express the condition that an element of
$\mathcal{A}:=\mathcal{A}(\bfs L,\bfs\alpha)$ has factorization
pattern $\bfs \lambda$ in terms of the elementary symmetric
polynomials $\Pi_1\klk\Pi_{s}$ of $\fq[\bfs Y]$.
\begin{corollary}\label{coro: fact patterns: systems pattern lambda}
    A polynomial $f:=T^n+a_{n-1}T^{n-1}\plp a_0\in\mathcal{A}$ has
    factorization pattern $\bfs \lambda$ if and only if there exists
    $\bfs{x}\in\fq^n$ of type $\bfs \lambda$ such that \eqref{eq: fact
        patterns: sym pols and pattern lambda} and
    \begin{equation}\label{eq: fact patterns: systems pattern lambda}
    L_j\big((-1)^{s}\, \Pi_{s}(\bfs Y(\bfs x))\klk -\Pi_1(\bfs
    Y(\bfs x))\big)+\alpha_j=0 \quad(1\le j\le m)
    \end{equation}
    hold. In particular, if $f:=G(\bfs x,T)\in\mathcal{A}_{\bfs\lambda}$
    is square--free, then there are $w(\bfs \lambda)$ elements $\bfs x$
    for which (\ref{eq: fact patterns: systems pattern lambda}) holds.
\end{corollary}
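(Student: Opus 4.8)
The plan is to combine Lemma \ref{lemma: fact patterns: sym pols and pattern lambda} with the defining condition of the linear family $\mathcal{A}$. First I would recall that, by definition \eqref{eq: intro: definition A}, a polynomial $f:=T^n+a_{n-1}T^{n-1}\plp a_0\in\mathcal{P}$ lies in $\mathcal{A}$ if and only if it lies in $\mathcal{P}$ and satisfies $\bfs L(a_{n-s}\klk a_{n-1})+\bfs\alpha=\bfs 0$, that is, $L_j(a_{n-s}\klk a_{n-1})+\alpha_j=0$ for $1\le j\le m$. Now suppose $f\in\mathcal{A}$ has factorization pattern $\bfs\lambda$. Since $\mathcal{A}\subset\mathcal{P}$, Lemma \ref{lemma: fact patterns: sym pols and pattern lambda} applies and yields $\bfs x\in\fq^n$ of type $\bfs\lambda$ with $a_k=(-1)^{n-k}\Pi_{n-k}(\bfs Y(\bfs x))$ for $0\le k\le n-1$; this is precisely \eqref{eq: fact patterns: sym pols and pattern lambda}. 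Substituting the values $a_{n-k}=(-1)^{k}\Pi_{k}(\bfs Y(\bfs x))$ for $1\le k\le s$ into the membership condition for $\mathcal{A}$, we obtain exactly \eqref{eq: fact patterns: systems pattern lambda}, because $L_j$ depends only on the coefficients $a_{n-s}\klk a_{n-1}$, which correspond to $\Pi_s(\bfs Y(\bfs x))\klk\Pi_1(\bfs Y(\bfs x))$ with the appropriate signs $(-1)^s\klk -1$.

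Conversely, if $\bfs x\in\fq^n$ is of type $\bfs\lambda$ and both \eqref{eq: fact patterns: sym pols and pattern lambda} and \eqref{eq: fact patterns: systems pattern lambda} hold, then setting $a_k:=(-1)^{n-k}\Pi_{n-k}(\bfs Y(\bfs x))$ and $f:=T^n+a_{n-1}T^{n-1}\plp a_0$, Lemma \ref{lemma: fact patterns: sym pols and pattern lambda} gives that $f\in\mathcal{P}$ has factorization pattern $\bfs\lambda$; and \eqref{eq: fact patterns: systems pattern lambda} rewrites as $L_j(a_{n-s}\klk a_{n-1})+\alpha_j=0$ for $1\le j\le m$, so $f\in\mathcal{A}$ by \eqref{eq: intro: definition A}. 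Hence $f\in\mathcal{A}$ has factorization pattern $\bfs\lambda$. This establishes the equivalence.

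For the final count: when $f=G(\bfs x,T)\in\mathcal{A}_{\bfs\lambda}$ is square--free, the last assertion of Lemma \ref{lemma: fact patterns: sym pols and pattern lambda} says there are exactly $w(\bfs\lambda)$ elements $\bfs x\in\fq^n$ for which \eqref{eq: fact patterns: sym pols and pattern lambda} holds. Each such $\bfs x$ automatically also satisfies \eqref{eq: fact patterns: systems pattern lambda}, since \eqref{eq: fact patterns: systems pattern lambda} is obtained from the fixed membership relation $L_j(a_{n-s}\klk a_{n-1})+\alpha_j=0$ by substituting the coefficients of the \emph{given} polynomial $f$, which are the same for every such $\bfs x$; so the two systems have the same solution set, of cardinality $w(\bfs\lambda)$.

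There is really no obstacle here: the statement is a bookkeeping consequence of Lemma \ref{lemma: fact patterns: sym pols and pattern lambda} together with the linear constraints defining $\mathcal{A}$. The only point that needs a modicum of care is keeping the index shift $a_{n-k}\leftrightarrow\Pi_k(\bfs Y)$ and the signs $(-1)^k$ straight, so that \eqref{eq: fact patterns: systems pattern lambda} is seen to be exactly the translation of the membership condition in \eqref{eq: intro: definition A} via \eqref{eq: fact patterns: sym pols and pattern lambda}.
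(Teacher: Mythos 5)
Your proof is correct and takes exactly the route the paper intends: the paper states this corollary without proof, treating it as an immediate consequence of Lemma~\ref{lemma: fact patterns: sym pols and pattern lambda} combined with the membership condition of $\mathcal{A}$ in \eqref{eq: intro: definition A}, and your proposal spells out precisely that bookkeeping, including the correct index shift $a_{n-k}=(-1)^k\Pi_k(\bfs Y(\bfs x))$ and the observation that, for a fixed square--free $f=G(\bfs x,T)\in\mathcal{A}_{\bfs\lambda}$, each of the $w(\bfs\lambda)$ vectors $\bfs x$ yields the same coefficients and hence automatically satisfies \eqref{eq: fact patterns: systems pattern lambda}.
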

%
%
\subsection{The number of polynomials in $\mathcal{A}_{\lambda}$}
%
Given a factorization pattern $\bfs{\lambda}:=1^{\lambda_1} \cdots
n^{\lambda_n}$, consider the set $\mathcal{A}_{\bfs\lambda}$ of
elements of the family $\mathcal{A}\subset \mathcal{P}$ of (\ref{eq:
intro: definition A}) having factorization pattern $\bfs{\lambda}$.
In this section we estimate the number of elements of
$\mathcal{A}_{\bfs\lambda}$. For this purpose, 
%
in Corollary \ref{coro: fact patterns: systems pattern lambda} we
associate to $\mathcal{A}_{\bfs \lambda}$ the following polynomials
of $\fq[\bfs X]$:
\begin{equation}\label{eq: geometry: def R_j}
R_j:=R_j^{\bfs\lambda}:= L_j\big((-1)^{s}\, \Pi_{s}(\bfs Y(\bfs
X))\klk -\Pi_1(\bfs Y(\bfs X))\big)+\alpha_j\quad (1\le j\le m).
\end{equation}
Up to the linear change of coordinates defined by $\bfs Y:=(Y_1\klk
Y_n)$, where $\bfs Y$ is the vector of linear forms of $\cfq[\bfs
X]$ defined  in \eqref{eq: fact patterns: def linear forms Y}, we
may express each $R_j$ as a linear polynomial in the first $s$
elementary symmetric polynomials $\Pi_1,\ldots,\Pi_{s}$ of $\fq[\bfs
Y]$. More precisely, let $Z_1\klk Z_{s}$ be new indeterminates over
$\cfq$. Then we may write
$$
R_j=S_j(\Pi_1,\ldots,\Pi_{s})\quad (1\le j\le m),
$$
where $S_1\klk S_m\in \fq[Z_1,\ldots,Z_{s}]$ are defined as
$S_j:=L_j((-1)^sZ_{s},\ldots,-Z_1)+\alpha_j$ ($1 \leq j \leq m$).

We observe that $S_1,\ldots,S_m$ are elements of degree $1$ whose
homogeneous components of degree 1 are linearly independent in
$\cfq[Z_1,\ldots,Z_{s}]$. It follows that the Jacobian matrix
$(\partial \bfs{S}/\partial \bfs{Z})(\boldsymbol{z})$ of
$\boldsymbol{S}:=(S_1\klk S_m)$ with respect to $\bfs{Z}
:=(Z_1,\ldots,Z_{s})$ has full rank $m$ for every
$\boldsymbol{z}\in\A^s$. Furthermore, it is easy to see that
$S_1\ldots,S_m$ form a regular sequence. In other words,
$S_1,\ldots,S_m$ satisfy hypotheses ($\sf H_1$) and ($\sf H_2$) of
Section \ref{section: geometry symm complete inters}.

On the other hand, by assumption the Jacobian matrix $(\partial \bfs
S/\partial \bfs Z)$ is lower triangular in row--echelon form, $1
\leq i_1<\ldots <i_m\leq s$ denoting the positions corresponding to
the pivots. This shows that the components
$S_1^{\mathrm{wt}}=c_1Z_{i_1},\ldots,S_m^{\mathrm{wt}}=c_mZ_{i_1}$
of highest weight of $S_1,\ldots,S_m$ are linearly independent
homogeneous polynomials of degree $1$. Hence, $S_1,\ldots,S_m$
satisfy hypothesis ($\sf H_3$). Finally, as the integers $m$, $n$
and $s$ satisfy the inequalities $m \leq s\leq n-m-2$, we are able
to apply Theorem \ref{theorem: nb point V_r}.

Taking into account that $\mathrm{deg}(R_j)=i_j$ $(1 \leq j\leq m)$,
by the previous considerations and Corollary \ref{coro: nb point V_r
distinct coordinates} we obtain the following result.
\begin{theorem}\label{Ineq: inequality of V(fq) I}
Let $s$, $m$ and $n$ be positive integers with $m \leq s \leq
n-m-2$, and let $R_1,\ldots,R_m \in \fq[X_1,\ldots,X_n]$ be the
polynomials defined in \eqref{eq: geometry: def R_j}. Then there
exist polynomials $S_1\ldots,S_m \in \cfq[Z_1\ldots,Z_s]$ satisfying
hypothesis $(\sf H_1)$, $(\sf H_2)$ and $(\sf H_3)$ for which
$R_i:=S_i(\Pi_1,\ldots,\Pi_s)$ holds $(1 \leq i \leq m)$.
Furthermore, let $V:=V(R_1,\ldots,R_m)\subset \A^n$, let
$$V^{=}:=\mathop{\bigcup_{1\le i\le n}}_{ 1\leq j_1 <j_2\leq
    \lambda _i,\, \,  1\leq k_1 <k_2 \leq i}
V\cap\{Y_{\ell_{i,j_1}+k_1}=Y_{\ell_{i,j_2}+k_2}\},
$$
and $V^{\neq}:=V\setminus V^{=}$, where $Y_{\ell_{i,j}+k}$ are the
linear forms of \eqref{eq: fact patterns: def linear forms Y}. Then
\begin{align*}
\big||V^{\neq}(\fq)|-q^{n-m}\big|&\leq 14 D_{\bfs L}^3 \delta_{\bfs
L}^2(q+1)q^{n-m-2}+n^2\delta_{\bfs L}q^{n-m-1},
\end{align*}
where $D_{\bfs L}:=\sum_{j=1}^m (i_j-1)$ and $\delta_{\bfs
L}:=i_1\cdots i_m$.
\end{theorem}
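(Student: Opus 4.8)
The plan is to verify that the polynomials $S_1,\ldots,S_m$ constructed from the linear forms $L_1,\ldots,L_m$ fall under the hypotheses of Theorem \ref{theorem: nb point V_r} and Corollary \ref{coro: nb point V_r distinct coordinates}, and then translate the resulting estimate into the notation of the statement. First I would record that, since $L_1,\ldots,L_m$ are linearly independent linear forms in $A_{n-s},\ldots,A_{n-1}$, the polynomials $S_j := L_j((-1)^s Z_s,\ldots,-Z_1)+\alpha_j$ are degree-$1$ polynomials whose degree-$1$ homogeneous parts are linearly independent in $\cfq[Z_1,\ldots,Z_s]$. Consequently the Jacobian matrix $(\partial\bfs S/\partial\bfs Z)$ is a constant matrix of full rank $m$ at every point of $\A^s$, which is hypothesis $(\sf H_2)$; and since the degree-$1$ parts are independent, $S_1,\ldots,S_m$ also form a regular sequence, giving $(\sf H_1)$. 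For $(\sf H_3)$ I would invoke the assumption that $(\partial\bfs S/\partial\bfs A)$ — equivalently $(\partial\bfs S/\partial\bfs Z)$ after the reversal of variables — is in lower-triangular row-echelon form with pivots in positions $i_1<\cdots<i_m$; this forces the highest-weight components to be $S_j^{\wt}=c_j Z_{i_j}$ with nonzero $c_j$, which are again linearly independent degree-$1$ forms, so they satisfy $(\sf H_1)$ and $(\sf H_2)$, establishing $(\sf H_3)$. The first sentence of the theorem is then exactly the statement that such $S_1,\ldots,S_m$ exist with $R_i=S_i(\Pi_1,\ldots,\Pi_s)$.

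Next I would pin down the numerics. The degree of $R_j=S_j(\Pi_1,\ldots,\Pi_s)$ equals the weight of $S_j^{\wt}=c_jZ_{i_j}$, which is $i_j$, since $\wt(Z_k)=k$; hence $\deg R_j=i_j$ for $1\le j\le m$, so $\delta:=\prod_j\deg R_j=i_1\cdots i_m=\delta_{\bfs L}$ and $D:=\sum_j(\deg R_j-1)=\sum_j(i_j-1)=D_{\bfs L}$. The hypothesis $m\le s\le n-m-2$ matches the standing hypothesis $m\le s\le r-m-2$ with $r=n$, so Theorem \ref{theorem: nb point V_r} applies to $V=V(R_1,\ldots,R_m)\subset\A^n$ and gives $\big||V(\fq)|-q^{n-m}\big|\le 14 D_{\bfs L}^3\delta_{\bfs L}^2(q+1)q^{n-m-2}$.

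To pass from $V$ to $V^{\neq}$ I would apply Corollary \ref{coro: nb point V_r distinct coordinates} with the index set $\mathcal I$ taken to consist of the pairs of coordinates that are identified in the definition of $V^=$. Strictly, the corollary is phrased for hyperplanes $\{X_i=X_j\}$ in the original coordinates $X_1,\ldots,X_n$, whereas here we intersect with the hyperplanes $\{Y_{\ell_{i,j_1}+k_1}=Y_{\ell_{i,j_2}+k_2}\}$ in the linearly-changed coordinates $\bfs Y$; since $\bfs Y$ is an invertible linear change of coordinates of $\cfq[\bfs X]$, each such hyperplane is again a hyperplane of the form $\{\text{linear form}=0\}$, and one checks that the proof of Theorem \ref{theorem: nb point V_r distinct coordinates} — which only uses that the restricted elementary-symmetric Jacobian still has a Vandermonde-type nonvanishing maximal minor in enough of the coordinates, cf.\ \eqref{eq: pi_i en x_(r-1) igual x_r} — goes through verbatim for each of these hyperplanes in the $\bfs Y$-coordinates, so $|V(\fq)\cap\{Y_{\ell_{i,j_1}+k_1}=Y_{\ell_{i,j_2}+k_2}\}|\le\delta_{\bfs L}q^{n-m-1}$ for each. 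The number of pairs $(i,j_1,j_2,k_1,k_2)$ with $1\le i\le n$, $1\le j_1<j_2\le\lambda_i$, $1\le k_1<k_2\le i$ is at most $\sum_i\binom{\lambda_i}{2}\binom{i}{2}\le\sum_i(i\lambda_i)^2/4\le\big(\sum_i i\lambda_i\big)^2/4=n^2/4\le n^2$, so $|\mathcal I|\le n^2$. Plugging $|\mathcal I|\le n^2$, $\delta=\delta_{\bfs L}$, $D=D_{\bfs L}$ into the conclusion of Corollary \ref{coro: nb point V_r distinct coordinates} yields
$$
\big||V^{\neq}(\fq)|-q^{n-m}\big|\le 14 D_{\bfs L}^3\delta_{\bfs L}^2(q+1)q^{n-m-2}+n^2\delta_{\bfs L}q^{n-m-1},
$$
which is the claimed bound.

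The only genuinely non-routine point is the adaptation of Theorem \ref{theorem: nb point V_r distinct coordinates} from the standard coordinates $X_1,\ldots,X_n$ to the $\bfs Y$-coordinates: one must be careful that, although $\bfs Y$ is an $\fq$-linear change of variables, the variety $V$ is defined through $\Pi_k(\bfs Y)$ rather than $\Pi_k(\bfs X)$, so the Vandermonde factorization \eqref{eq: factorization Jacobian elem sim pols} is used for the elementary symmetric polynomials in the $Y$-variables and the "$\ge s-1$ distinct coordinates" conclusion should be read for the $y$-coordinates. Since $\bfs Y$ is invertible this causes no loss — a point of $V$ with fewer than a prescribed number of distinct $y$-coordinates lies in a linear subvariety of the appropriate dimension — and the dimension count $r-m-1-s\ge 1$ is exactly $n-m-1-s\ge 1$, guaranteed by $s\le n-m-2$. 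Everything else is bookkeeping of the constants $D_{\bfs L}$, $\delta_{\bfs L}$ and the crude bound $|\mathcal I|\le n^2$.
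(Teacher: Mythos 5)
Your proposal is correct and follows essentially the same route as the paper: verify $(\sf H_1)$--$(\sf H_3)$ for the linear polynomials $S_j:=L_j((-1)^s Z_s,\ldots,-Z_1)+\alpha_j$ (linear independence of the degree-$1$ parts gives $(\sf H_1)$ and $(\sf H_2)$; the echelon-form assumption with pivots at $i_1<\cdots<i_m$ forces $S_j^{\wt}=c_jZ_{i_j}$, giving $(\sf H_3)$), compute $\deg R_j=i_j$ so that $\delta=\delta_{\bfs L}$ and $D=D_{\bfs L}$, apply Theorem \ref{theorem: nb point V_r} and Corollary \ref{coro: nb point V_r distinct coordinates}, and bound $|\mathcal I|\le n^2$. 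The paper itself states the result as an immediate consequence of ``the previous considerations and Corollary \ref{coro: nb point V_r distinct coordinates}'' without a separate proof, and your proposal is precisely an unpacking of those considerations. Your flagging of the $\bfs Y$-versus-$\bfs X$ coordinate issue is a worthwhile addition that the paper leaves tacit; do note, though, that the change of coordinates $\bfs X\mapsto\bfs Y$ of \eqref{eq: fact patterns: def linear forms Y} is only $\cfq$-linear (the matrices $A_i$ have entries in $\fqi$), so one should phrase the reduction as: the geometric hypotheses (complete intersection, multidegree, regularity in codimension $\ge 2$, pure dimension of $V\cap\{Y_a=Y_b\}$) are checked in the $\bfs Y$-coordinates, and since they are coordinate-independent, the $\fq$-rational point estimate \eqref{eq: estimate rat points CML} and the bound \eqref{eq: upper bound -- affine gral} then apply to the $\fq$-variety $V$ and its $\cfq$-subvarieties $V\cap\{Y_a=Y_b\}$, which is all the argument needs.
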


%
Corollary \ref{coro: fact patterns: systems pattern lambda} relates
the number $|V(\fq)|$ of common $\fq$--rational zeros of $R_1\klk
R_m$ to the quantity $|\mathcal{A}_{\bfs{\lambda}}|$. More
precisely, let $\bfs x:=(\bfs x_{i,j}:1\le i\le n,1\le j\le
\lambda_i)\in\fq^n$ be an $\fq$--rational zero of $R_1\klk R_m$ of
type $\bfs\lambda$. Then $\bfs x$ is associated with an element
$f\in\mathcal{A}_{\bfs\lambda}$ having $Y_{\ell_{i,j}+k}(\bfs
x_{i,j})$ as an $\fqi$--root for $1\le i\le n$, $1\le j\le\lambda_i$
and $1\le k\le i$, where, $Y_{\ell_{i,j}+k}$ is the linear form
defined as in \eqref{eq: fact patterns: def linear forms Y}.

Let $\mathcal{A}_{\bfs\lambda}^{sq}:=\{f\in
\mathcal{A}_{\bfs\lambda}: f \mbox{ is square--free}\}$ and
$\mathcal{A}_{\bfs\lambda}^{nsq}:=\mathcal{A}_{\bfs
\lambda}\setminus \mathcal{A}_{\bfs\lambda}^{sq}$. Corollary
\ref{coro: fact patterns: systems pattern lambda} further asserts
that any element $f\in \mathcal{A}_{\bfs\lambda}^{sq}$ is associated
with $w(\bfs\lambda):=\prod_{i=1}^n i^{\lambda_i}\lambda_i!$ common
$\fq$--rational zeros of $R_1\klk R_m$ of type $\bfs\lambda$.
Observe that $\bfs x\in\fq^n$ is of type $\bfs\lambda$ if and only
if $Y_{\ell_{i,j}+k_1}(\bfs x) \neq Y_{\ell_{i,j}+k_2}(\bfs x)$ for
$1\leq i\leq n$, $1\leq j\leq \lambda_i$ and $1\leq k_1 <k_2 \leq
i$. Furthermore, an $\bfs x\in\fq^n$ of type $\bfs\lambda$ is
associated with $f\in\mathcal{A}_{\bfs\lambda}^{sq}$ if and only if
$Y_{\ell_{i,j_1}+k_1}(\bfs x) \neq Y_{\ell_{i,j_2}+k_2}(\bfs x)$ for
$1\leq i\leq n$, $1\leq j_1<j_2\leq \lambda_i$ and $1\leq k_1 <k_2
\leq i$. As a consequence, we see that
$|\mathcal{A}_{\bfs\lambda}^{sq}| =\mathcal{T}(\bfs\lambda)
\big|V^{\neq}(\fq)\big|$, which implies
$$
\big||\mathcal{A}_{\bfs\lambda}^{sq}|
-\mathcal{T}(\bfs\lambda)\,q^{n-m}\big| =
\mathcal{T}(\bfs\lambda)\,\big||V^{\neq}(\fq)|-q^{n-m}\big|.
$$
%
From Theorem \ref{Ineq: inequality of V(fq) I} we deduce that
\begin{align*}
\big||\mathcal{A}_{\bfs\lambda}^{sq}|
-\mathcal{T}(\bfs\lambda)\,q^{n-m}\big| &\le
\,\mathcal{T}(\bfs\lambda)\big(14\, D_{\bfs
        L}^3\delta_{\bfs L}^2(q+1)q^{n-m-2}+ n^2\delta_{\bfs
        L}q^{n-m-1}\big)\\
&\le q^{n-m-1} \mathcal{T}(\bfs\lambda)\,\big(21\, D_{\bfs
    L}^3\delta_{\bfs L}^2+ n^2\delta_{\bfs L}\big).
\end{align*}
%

Now we are able to estimate $|\mathcal{A}_{\bfs \lambda}|$. We have
\begin{align}
\big||\mathcal{A}_{\bfs\lambda}|
-\mathcal{T}(\bfs\lambda)\,q^{n-m}\big|&=
\big||\mathcal{A}_{\bfs\lambda}^{sq}|+
|\mathcal{A}_{\bfs\lambda}^{nsq}|-\mathcal{T}(\bfs\lambda)q^{n-m}\big|\nonumber\\
&\le q^{n-m-1} \mathcal{T}(\bfs\lambda)\,\big(21\, D_{\bfs
    L}^3\delta_{\bfs L}^2+ n^2\delta_{\bfs L}\big)+
|\mathcal{A}_{\bfs\lambda}^{nsq}|. \label{eq: estimates: estimate
A_lambda aux}
\end{align}
%
%
It remains to obtain an upper bound for
$|\mathcal{A}_{\bfs\lambda}^{nsq}|$. To this end, we observe that
$f\in \mathcal{A}$ is not square--free if and only if its
discriminant is equal to zero. Let $\mathcal{A}^{nsq}$ be {\em
discriminant locus} of $\mathcal{A}$, i.e., the set of elements of
$\mathcal{A}$ whose discriminant is equal to zero. In \cite{FrSm84}
and \cite{MaPePr14} discriminant loci are studied. In particular,
from \cite{FrSm84} one easily deduces that the discriminant locus
$\mathcal{A}^{nsq}$ is the set of $\fq$--rational points of a
hypersurface of degree at most $n(n-1)$ of a suitable
$(n-m)$--dimensional affine space. Then \eqref{eq: upper bound --
affine gral} implies
\begin{equation}\label{eq: estimates: upper bound discr locus}
|\mathcal{A}_{\bfs\lambda}^{nsq}|\le |\mathcal{A}^{nsq}|\le n(n-1)
\,q^{n-m-1}.
\end{equation}
Hence, combining \eqref{eq: estimates: estimate A_lambda aux} and
\eqref{eq: estimates: upper bound discr locus} we conclude that
\begin{align*}
\big||\mathcal{A}_{\bfs\lambda}|
-\mathcal{T}(\bfs\lambda)\,q^{n-m}\big|&\le
\mathcal{T}(\bfs\lambda)\,\big||V(\fq)|-q^{n-m}\big|+ n^2 q^{n-m-1}\\
&\le q^{n-m-1}\big(21\,\mathcal{T}(\bfs\lambda)\, D_{\bfs
    L}^3\delta_{\bfs L}^2+ \mathcal{T}(\bfs\lambda)\,n^2\delta_{\bfs L}+
    n^2\big).
\end{align*}
%
In other words, we have the following result.
\begin{theorem}\label{theorem: estimate fact patterns}
    For $q>n$ and $m\le s \le n-m-2$, we have that
    \begin{align*}
    \big||\mathcal{A}_{\bfs\lambda}^{sq}|
    -\mathcal{T}(\bfs\lambda)\,q^{n-m}\big|&\le q^{n-m-1}
    \mathcal{T}(\bfs\lambda)\,\big(21\, D_{\bfs
        L}^3\delta_{\bfs L}^2+ n^2\delta_{\bfs L}\big),\\
    \big||\mathcal{A}_{\bfs\lambda}|
    -\mathcal{T}(\bfs\lambda)\,q^{n-m}\big|&\le
    q^{n-m-1}\big(21\,\mathcal{T}(\bfs\lambda)\, D_{\bfs
        L}^3\delta_{\bfs L}^2+ \mathcal{T}(\bfs\lambda)\,n^2\delta_{\bfs L}+n^2\big),
    \end{align*}
    where $ \delta_{\bfs L}:=i_1\cdots i_m$ and $D_{\bfs
        L}:=\sum_{j=1}^m (i_j-1)$.
\end{theorem}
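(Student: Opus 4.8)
The statement is essentially a bookkeeping consequence of the estimates assembled just above it, so the plan is to split $\mathcal{A}_{\bfs\lambda}$ into its square--free part $\mathcal{A}_{\bfs\lambda}^{sq}$ and its non--square--free part $\mathcal{A}_{\bfs\lambda}^{nsq}$ and to estimate each separately. For the square--free part I would invoke Corollary \ref{coro: fact patterns: systems pattern lambda}: each square--free $f\in\mathcal{A}_{\bfs\lambda}$ is associated with exactly $w(\bfs\lambda)$ vectors $\bfs x\in\fq^n$ of type $\bfs\lambda$ satisfying \eqref{eq: fact patterns: systems pattern lambda}, and conversely the points of $V^{\neq}(\fq)$ are precisely the common $\fq$--rational zeros of $R_1\klk R_m$ of type $\bfs\lambda$ whose associated polynomial is square--free. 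Since $\mathcal{T}(\bfs\lambda)=1/w(\bfs\lambda)$, this yields the exact identity $|\mathcal{A}_{\bfs\lambda}^{sq}|=\mathcal{T}(\bfs\lambda)\,|V^{\neq}(\fq)|$. Applying Theorem \ref{Ineq: inequality of V(fq) I} to bound $\big||V^{\neq}(\fq)|-q^{n-m}\big|$ and using $q+1\le\tfrac32 q$ (valid since $q\ge 2$) to absorb $14\cdot\tfrac32\le21$ then produces the first displayed inequality.

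For the non--square--free part, the plan is to note that an $f\in\mathcal{A}$ fails to be square--free exactly when its discriminant vanishes, whence $\mathcal{A}_{\bfs\lambda}^{nsq}\subset\mathcal{A}^{nsq}$, the discriminant locus of $\mathcal{A}$. Appealing to the description of discriminant loci in \cite{FrSm84} (see also \cite{MaPePr14}), $\mathcal{A}^{nsq}$ is the set of $\fq$--rational points of a hypersurface of degree at most $n(n-1)$ in a suitable affine space of dimension $n-m$, so the affine point bound \eqref{eq: upper bound -- affine gral} gives $|\mathcal{A}_{\bfs\lambda}^{nsq}|\le n(n-1)\,q^{n-m-1}\le n^2q^{n-m-1}$.

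Finally I would combine the two estimates: since $\mathcal{A}_{\bfs\lambda}$ is the disjoint union of $\mathcal{A}_{\bfs\lambda}^{sq}$ and $\mathcal{A}_{\bfs\lambda}^{nsq}$, the triangle inequality gives
$$\big||\mathcal{A}_{\bfs\lambda}|-\mathcal{T}(\bfs\lambda)\,q^{n-m}\big|\le\big||\mathcal{A}_{\bfs\lambda}^{sq}|-\mathcal{T}(\bfs\lambda)\,q^{n-m}\big|+|\mathcal{A}_{\bfs\lambda}^{nsq}|,$$
and substituting the two bounds above (using $\mathcal{T}(\bfs\lambda)\le1$ only to present the constant in the stated form) yields the second displayed inequality.

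The genuinely substantial work is upstream rather than in this theorem: it is the verification, already carried out, that the polynomials $R_j$ of \eqref{eq: geometry: def R_j} satisfy hypotheses $({\sf H_1})$, $({\sf H_2})$ and $({\sf H_3})$, together with the geometric analysis feeding Theorem \ref{theorem: nb point V_r} and Corollary \ref{coro: nb point V_r distinct coordinates}. Within the proof of the present theorem the only nonroutine external ingredient is the structure of the discriminant locus from \cite{FrSm84}; the rest is the combinatorial identification $|\mathcal{A}_{\bfs\lambda}^{sq}|=\mathcal{T}(\bfs\lambda)\,|V^{\neq}(\fq)|$ plus elementary arithmetic with the error terms, so I anticipate no real obstacle at this stage.
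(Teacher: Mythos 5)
Your proposal follows the paper's proof essentially line for line: the identity $|\mathcal{A}_{\bfs\lambda}^{sq}|=\mathcal{T}(\bfs\lambda)\,|V^{\neq}(\fq)|$ from Corollary~\ref{coro: fact patterns: systems pattern lambda}, the estimate of Theorem~\ref{Ineq: inequality of V(fq) I} with $14(q+1)/q\le 21$, the bound $|\mathcal{A}_{\bfs\lambda}^{nsq}|\le n(n-1)q^{n-m-1}$ via the discriminant locus from \cite{FrSm84} together with \eqref{eq: upper bound -- affine gral}, and the triangle inequality to combine. One small slip: your parenthetical ``using $\mathcal{T}(\bfs\lambda)\le 1$'' is not what is needed at the last step --- the $n^2$ term in the final bound is \emph{not} multiplied by $\mathcal{T}(\bfs\lambda)$, so all that is used is $n(n-1)\le n^2$.
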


This result strengthens \eqref{eq: intro: Cohen72} in several
aspects. The first one is that the hypotheses on the linear families
$\mathcal{A}$ in the statement of Theorem \ref{theorem: estimate
fact patterns} are easy to verify. The second aspect is that our
error term is of order $\mathcal{O}(q^{n-m-1})$, and we provide
explicit expressions for the constants underlying the
$\mathcal{O}$--notation with a good behavior. Furthermore, our
result is valid without any restriction on the characteristic $p$ of
$\fq$. On the other hand, Theorem \ref{theorem: estimate fact
patterns} holds for $m \leq n/2 -1$, while \eqref{eq: intro:
Cohen72} holds for $m$ varying in a much larger range of values.

A classical case which has received particular attention is that of
the elements of $\mathcal{P}$ having certain coefficients
prescribed. Therefore, we briefly state what we obtain in this case.
For this purpose, given $0< i_1<i_2<\cdots< i_m\le n$ and $\bfs
\alpha:=(\alpha_{i_1}\klk \alpha_{i_m})\in\fq^m$, set
$\mathcal{I}:=\{i_1\klk i_m\}$ and consider the set
$\mathcal{A}^\mathcal{I}:= \mathcal{A}(\mathcal{I},\bfs \alpha)$
defined in the following way:
\begin{equation}\label{eq: estimates: family A^m}
\mathcal{A}^\mathcal{I}:=\left\{T^n+a_1T^{n-1}\plp a_n\in\fq[T]:
a_{i_j}=\alpha_{i_j}\ (1\le j\le m)\right\}.
\end{equation}
Denote by $\mathcal{A}^{\mathcal{I},sq}$ the set of
$f\in\mathcal{A}^\mathcal{I}$ which are square--free.

For a given factorization pattern $\bfs\lambda$, let $G\in\fq[\bfs
X,T]$ be the polynomial of \eqref{eq: fact patterns: pol G}.
According to Lemma \ref{lemma: fact patterns: sym pols and pattern
lambda}, an element $f\in \mathcal{A}^\mathcal{I}$ has factorization
pattern $\bfs\lambda$ if and only if there exists $\bfs x$ of type
$\bfs\lambda$ such that
$$
(-1)^{i_j}\Pi_{i_j}(\bfs Y(\bfs x))=\alpha_{i_j}\quad(1\le j\le m).
$$
Let $\delta_{\mathcal{I}}:=i_1\cdots i_m$ and
$D_{\mathcal{I}}:=\sum_{j=1}^m(i_j-1)$. From Theorem \ref{theorem:
estimate fact patterns} we deduce the following result.
\begin{corollary}\label{coro: estimates: estimates for presc coeff}
     For $q > n$ and $i_m\le n-m-2$, then
    \begin{align*}
    \big||\mathcal{A}^{\mathcal{I},sq}_{\bfs
        \lambda}|-\mathcal{T}(\bfs\lambda)\,q^{n-m}\big|&\le
    q^{n-m-1}\mathcal{T}(\bfs\lambda)\,\big(21\,D_{\mathcal{I}}^3\,
    \delta_{\mathcal{I}}^2 + n^2\delta_{\mathcal{I}}\big),\\
    \big||\mathcal{A}^\mathcal{I}_{\bfs
        \lambda}|-\mathcal{T}(\bfs\lambda)\,q^{n-m}\big|&\le
    q^{n-m-1}\big(21\,\mathcal{T}(\bfs\lambda)\,D_{\mathcal{I}}^3\,
    \delta_{\mathcal{I}}^2 +\mathcal{T}(\bfs\lambda)\,n^2\delta_{\mathcal{I}}+ n^2\big).
    \end{align*}
\end{corollary}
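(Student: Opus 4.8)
The plan is to recognize the family $\mathcal{A}^{\mathcal{I}}$ of \eqref{eq: estimates: family A^m} as a particular instance of the linear family $\mathcal{A}(\bfs L,\bfs\alpha)$ of \eqref{eq: intro: definition A}, and then to quote Theorem \ref{theorem: estimate fact patterns} verbatim, tracking how the data $s$, $D_{\bfs L}$, $\delta_{\bfs L}$ specialize. Concretely, I would set $s:=i_m$. Since $i_1<\dots<i_m$ are $m$ distinct positive integers we have $m\le i_m=s$, and the hypothesis $i_m\le n-m-2$ gives $s\le n-m-2$; together with $q>n$ this is exactly the numerical hypothesis of Theorem \ref{theorem: estimate fact patterns}, so no extra condition $m\le s$ needs to be imposed. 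The linear forms are the coordinate functions $L_j$ selecting the coefficient of $T^{n-i_j}$ (which, in the notation of \eqref{eq: intro: definition A}, is one of $a_{n-s}\klk a_{n-1}$ because $1\le i_j\le s$), with $\alpha_j:=-\alpha_{i_j}$; these $L_j$ are manifestly linearly independent, so $\mathcal{A}^{\mathcal{I}}=\mathcal{A}(\bfs L,\bfs\alpha)$.

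Next I would identify the associated polynomials and their degrees. By the computation preceding the statement, which is a direct consequence of Lemma \ref{lemma: fact patterns: sym pols and pattern lambda}, an element $f\in\mathcal{A}^{\mathcal{I}}$ has factorization pattern $\bfs\lambda$ if and only if there is $\bfs x\in\fq^n$ of type $\bfs\lambda$ with $(-1)^{i_j}\Pi_{i_j}(\bfs Y(\bfs x))=\alpha_{i_j}$ for $1\le j\le m$. Hence the polynomials $R_j$ attached to $\mathcal{A}^{\mathcal{I}}_{\bfs\lambda}$ by \eqref{eq: geometry: def R_j} are $R_j=(-1)^{i_j}\Pi_{i_j}(\bfs Y(\bfs X))-\alpha_{i_j}$, and since $\Pi_{i_j}$ is homogeneous of degree $i_j$ we get $\deg R_j=i_j$. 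Therefore the quantities $\delta_{\bfs L}=i_1\cdots i_m$ and $D_{\bfs L}=\sum_{j=1}^m(i_j-1)$ appearing in Theorem \ref{theorem: estimate fact patterns} coincide with $\delta_{\mathcal{I}}$ and $D_{\mathcal{I}}$, and the sets $\mathcal{A}^{\mathcal{I},sq}_{\bfs\lambda}$, $\mathcal{A}^{\mathcal{I}}_{\bfs\lambda}$ are precisely the sets $\mathcal{A}^{sq}_{\bfs\lambda}$, $\mathcal{A}_{\bfs\lambda}$ attached to this choice of $\bfs L$, $\bfs\alpha$.

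Finally I would invoke Theorem \ref{theorem: estimate fact patterns} for $\mathcal{A}(\bfs L,\bfs\alpha)$ and substitute $\delta_{\bfs L}=\delta_{\mathcal{I}}$, $D_{\bfs L}=D_{\mathcal{I}}$ into its two estimates; this yields the two displayed bounds verbatim. Since all the geometric work — the complete intersection structure of the underlying variety, the control of the singular locus in the affine chart and at infinity via Theorem \ref{theorem: dimension singular locus V_r} and Theorem \ref{th: pcl in infinity is absolutely irreducible}, the point count through Theorem \ref{theorem: nb point V_r} and Corollary \ref{coro: nb point V_r distinct coordinates}, and the discriminant-locus bound — is already packaged inside Theorem \ref{theorem: estimate fact patterns}, the only step here is the translation of data. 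The one place to be careful is the bookkeeping between the two coefficient-indexing conventions (the coefficients $a_0\klk a_{n-1}$ ordered by degree in \eqref{eq: intro: definition A} versus $a_1\klk a_n$ in \eqref{eq: estimates: family A^m}) and the verification that $\deg R_j=i_j$ under this convention; beyond that there is no genuine obstacle.
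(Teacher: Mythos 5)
Your proposal is correct and follows the paper's own route: the paper itself reduces the corollary to Theorem \ref{theorem: estimate fact patterns} by viewing $\mathcal{A}^{\mathcal{I}}$ as the linear family $\mathcal{A}(\bfs L,\bfs\alpha)$ with $s:=i_m$ and $L_j$ the coordinate function picking out the coefficient of $T^{n-i_j}$, noting (in the remark after the corollary) that the reindexing turns $m\le s\le n-m-2$ into $i_m\le n-m-2$. Your bookkeeping --- $m\le i_m=s$ automatically since $i_1<\cdots<i_m$ are distinct positive integers, $\deg R_j=i_j$, hence $\delta_{\bfs L}=\delta_{\mathcal I}$ and $D_{\bfs L}=D_{\mathcal I}$ --- matches the paper exactly.
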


Observe that, for the sake of simplicity, the enumeration of
coefficients of the elements in the family $\mathcal{A}^\mathcal{I}$
of \eqref{eq: estimates: family A^m} is changed with respect to that
of the family $\mathcal{A}(\bfs L,\bfs \alpha)$ of \eqref{eq: intro:
definition A}. Therefore, the conditions $m\le s\le n-m-2$ in
Theorem \ref{theorem: estimate fact patterns} are now expressed as
$i_m\le n-m-2$.
%
%
\section{The average cardinality of value sets}
\label{section: average card value sets}
Next we consider the problem of estimating the average cardinality
of value sets of families of univariate polynomials with
coefficients in $\fq$. More precisely, in this section we estimate
the average cardinality of the value set of any family of monic
polynomials of $\fq[T]$ of degree $n$ for which $s$ consecutive
coefficients $a_{n-1},\ldots,a_{n-s}$ are fixed.

As we said before, our approach reduces the question to estimate the
number of $\fq$--rational points with pairwise--distinct coordinates
of a certain family of complete intersections defined over $\fq$. As
the polynomials defining such complete intersections are symmetric,
we shall be able to apply Corollary \ref{coro: nb point V_r distinct
coordinates} to obtain a suitable estimate on the number of
$\fq$--rational points of these complete intersections.

We begin by recalling the basic notions and the previous results
related to the study of the cardinality of value sets of univariate
polynomials defined over a finite field.

Let $T$ an indeterminate over $\fq$ and let $f\in \fq[T]$. We denote
by $\mathcal{V}(f)$ the cardinality of the value set of $f$, namely
$\mathcal{V}(f):=|\{f(c):c\in\fq\}|$ (cf. \cite{LiNi83}). The
quantity $\mathcal{V}(f)$ has been extensively studied for arbitrary
polynomials. Exact formulas for $\mathcal{V}(f)$ are established for
certain particular classes of polynomials (for example, polynomials
of low degree).

Concerning the behavior of $\mathcal{V}(f)$ for ``large'' sets of
elements of $\fq[T]$, Birch and Swinnerton--Dyer established the
following significant result \cite{BiSD59}: for fixed $n\ge 1$, if
$f$ is a generic polynomial of degree $n$, then
$$\mathcal{V}(f)=\mu_n\,q+
\mathcal{O}(q^{1/2}),$$
where $\mu_n:=\sum_{r=1}^n{(-1)^{r-1}}/{r!}$ and the constant
underlying the $\mathcal{O}$--notation depends only on $n$. Results
on the average value $\mathcal{V}(n,0)$ of $\mathcal{V}(f)$ when $f$
ranges over all monic polynomials in $\fq[T]$ of degree $n$ with
$f(0)=0$ were obtained by Uchiyama \cite{Uchiyama55b} and improved
by Cohen \cite{Cohen73}. More precisely, in \cite[\S 2]{Cohen73} it
is shown that
$$\mathcal{V}(n,0)=\sum_{r=1}^n(-1)^{r-1}\binom{q}{r}q^{1-r}=\mu_n\,q
+\mathcal{O}(1).$$
A variant of this problem, considered by Uchiyama and Cohen, asks
for results on the average cardinality of the value set of the set
of polynomials $f\in\fq[T]$ of given degree were some coefficients
are fixed. For this problem, in \cite{Uchiyama55b} and
\cite{Cohen72} the authors obtain the following result. Consider the
family of monic polynomials of $\fq[T]$ of degree $n$, were $s$
consecutive coefficients are fixed, with $1\le s\le n-2$. More
precisely, denote by $\bfs{a}:=(a_{n-1}\klk a_{n-s})\in\fq^s$ the
vector of values of the coefficients to be fixed and consider the
set of $f_{\boldsymbol{b}}$ of the form
$$f_{\boldsymbol{b}}:=f_{\boldsymbol{b}}^{\bfs{a}}
:=T^n+\sum_{i=1}^sa_{n-i}T^{n-i}+\sum_{i=s+1}^nb_{n-i}T^{n-i}$$
for every $\boldsymbol{b}:=(b_{n-s-1}\klk b_0)$. Then for
$p:=\mathrm{char}(\fq)>n$,
\begin{equation}\label{eq: average value set}
\mathcal{V}(n,s,\bfs{a}):=
\frac{1}{q^{n-s}}\sum_{\boldsymbol{b}\in\fq^{n-s}}\mathcal{V}(f_{\boldsymbol{b}})=
\mu_n\,q+\mathcal{O}(q^{1/2}),
\end{equation}
where the constant underlying the $\mathcal{O}$--notation depends
only on $n$ and $s$. In this section we obtain an strengthened
explicit version of (\ref{eq: average value set}), which holds
without any restriction on the characteristic $p$ of $\fq$.
%
%
\subsection{Value sets in terms of interpolating sets}
Given $\bfs{a}:=(a_{n-1}\klk a_{n-s})\in\fq^s$, let
$$f_{\bfs{a}}:=T^n+a_{n-1}T^{n-1}\plp a_{n-s}T^{n-s}.$$
%
Then we consider the set $\mathcal{A}:=\mathcal{A}(n,s,{\bfs{a}})$
defined in the following way:
$$\mathcal{A}:=\{f_{\bfs b}:=f_{\bfs{a}}+b_{n-s-1}T^{n-s-1}\plp b_0:\,
\bfs{b}:=(b_{n-s-1}\klk b_0)\in \fq^{n-s} \}.$$
Hence $\mathcal{V}(n,s,\bfs{a})$ is the average value of
$\mathcal{V}(f)$ when $f$ ranges over elements of $\mathcal{A}$,
that is,
$$
\mathcal{V}(n,s,\bfs{a}):= \frac{1}{|\mathcal{A}|}\sum_{f\in
\mathcal{A}}\mathcal{V}(f)=\frac{1}{q^{n-s}}\sum_{\boldsymbol{b}\in\fq^{n-s}}\mathcal{V}(f_{\boldsymbol{b}}).
$$
Following \cite{CeMaPePr14}, in order to estimate
$\mathcal{V}(n,s,\bfs{a})$ we express this quantity in terms of the
number $\chi(\bfs{a},r)$ of certain ``interpolating sets'' with
$n-s+1\le r\le n$.

Observe that for any $\bfs{b}:=(b_{n-s-1},\ldots,b_0)\in\fq^{n-s}$,
the cardinality $\mathcal{V}(f_{\bfs{b}})$ of the value set of
$f_{\bfs{b}}$ equals the number of elements $\beta_0\in\fq$ for
which $f_{\bfs{b}}+\beta_0$ has at least one root in $\fq$. Denote
by $\mathcal{P}:=\mathcal{P}_n$ the set of monic polynomials of
$\fq[T]$ of degree $n$, let $\mathcal{N}:\mathcal{P}\to\Z_{\ge 0}$
be the random variable which counts the number of roots in $\fq$ and
$\bfs{1}_{\{\mathcal{N}>0\}}:\mathcal{P}\to\{0,1\}$ the
characteristic function of the set of elements of $\mathcal{P}$
having at least one root in $\fq$. From the assertions above we
deduce the following identity:
$$\sum_{\bfs{b}\in\fq^{n-s}}\mathcal{V}(f_{\bfs{b}})=
\sum_{\beta_0\in\fq}\sum_{\bfs{b}\in\fq^{n-s}}
\bfs{1}_{\{\mathcal{N}>0\}}
(f_{\bfs{b}}+\beta_0)=q\cdot\big|\{g\in\fq[T]_{n-s-1}:
\mathcal{N}(f_{\bfs{a}}+g)>0 \}\big|,$$
where $\fq[T]_{n-s-1}$ is the set of elements of $\fq[T]$ of degree
at most $n-s-1$. For $\mathcal{X}\subseteq\fq$, we define
$\mathcal{S}_{\mathcal{X}}^{\bfs{a}}$ as the set of
$g\in\fq[T]_{n-s-1}$ which interpolate $-f_{\bfs{a}}$ at all the
points of $\mathcal{X}$, namely
$$
\mathcal{S}_{\mathcal{X}}^{\bfs{a}}:=\{g\in\fq[T]_{n-s-1}:
(f_{\bfs{a}}+g)(x)=0\textrm{ for any }x\in\mathcal{X}\}.
$$
For $r\in\N$ we shall use the symbol $\mathcal{X}_r$ to denote a
subset of $\fq$ of $r$ elements.

With this terminology, we have the following combinatorial
expression of $\mathcal{V}(n,s,\bfs{a})$, whose proof can be found
in \cite[Theorem 2.1]{CeMaPePr14}.
\begin{proposition}\label{prop: reduction value sets to interp sets}
Given $s,n\in\N$ with $1\le s\le n-1$, we have
\begin{equation}\label{eq: our formula for value sets}
\mathcal{V}(n,s,\bfs{a})= \sum_{r=1}^{n-s}(-1)^{r-1}\binom {q} {r}
q^{1-r}+\frac{1}{q^{n-s-1}}\sum_{r=n-s+1}^{n} (-1)^{r-1}
\chi(\bfs{a},r),\end{equation}
where  $\chi(\bfs{a},r)$ is the number of subsets $\mathcal{X}_r$ of
$\fq$ of $r$ elements such that there exists $g\in\fq[T]_{n-s-1}$
with $(f_{\bfs{a}}+g)|_{\mathcal{X}_r}\equiv 0$.
\end{proposition}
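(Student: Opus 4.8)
The plan is to build on the identity established in the paragraph preceding the statement, namely $\sum_{\bfs{b}\in\fq^{n-s}}\mathcal{V}(f_{\bfs b})=q\cdot\big|\{g\in\fq[T]_{n-s-1}:\mathcal{N}(f_{\bfs a}+g)>0\}\big|$; dividing by $|\mathcal{A}|=q^{n-s}$ gives
$$\mathcal{V}(n,s,\bfs a)=\frac{1}{q^{n-s-1}}\,\big|\{g\in\fq[T]_{n-s-1}:\mathcal{N}(f_{\bfs a}+g)>0\}\big|,$$
so it suffices to count the $g$ of degree at most $n-s-1$ for which $f_{\bfs a}+g$ has a root in $\fq$. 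For such a $g$ write $Z_g:=\{x\in\fq:(f_{\bfs a}+g)(x)=0\}$, and observe that for every $\mathcal{X}\subseteq\fq$ one has $\{g\in\fq[T]_{n-s-1}:\mathcal{X}\subseteq Z_g\}=\mathcal{S}_{\mathcal{X}}^{\bfs a}$. The first step is the inclusion--exclusion identity obtained by summing over $g$ the expansion of $\bfs{1}_{\{Z_g\neq\emptyset\}}=1-\prod_{x\in\fq}\big(1-\bfs{1}_{\{x\in Z_g\}}\big)$ and grouping subsets $\mathcal{X}$ by their cardinality $r$, which yields
$$\big|\{g\in\fq[T]_{n-s-1}:Z_g\neq\emptyset\}\big|=\sum_{r=1}^{q}(-1)^{r-1}\sum_{\mathcal{X}_r}\big|\mathcal{S}_{\mathcal{X}_r}^{\bfs a}\big|.$$

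The second step is to evaluate $|\mathcal{S}_{\mathcal{X}_r}^{\bfs a}|$ via a dimension count on the evaluation map $g\mapsto(g(x))_{x\in\mathcal{X}_r}$, a linear map from the $(n-s)$--dimensional space $\fq[T]_{n-s-1}$ to $\fq^r$. For $r\le n-s$ this map is surjective, since Lagrange interpolation at $r$ distinct points produces a polynomial of degree at most $r-1\le n-s-1$; hence the fiber $\mathcal{S}_{\mathcal{X}_r}^{\bfs a}$, which lies over $\big(-f_{\bfs a}(x)\big)_{x\in\mathcal{X}_r}$, is nonempty and has exactly $q^{n-s-r}$ elements. For $r\ge n-s$, two polynomials of degree at most $n-s-1$ that agree on $\mathcal{X}_r$ must coincide, so $|\mathcal{S}_{\mathcal{X}_r}^{\bfs a}|\le 1$; and since $f_{\bfs a}+g$ is monic of degree $n$ it has at most $n$ roots in $\fq$, so $\mathcal{S}_{\mathcal{X}_r}^{\bfs a}=\emptyset$ whenever $r>n$.

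The final step is to split the sum at $r=n-s$ and substitute. The terms $1\le r\le n-s$ contribute $\sum_{r=1}^{n-s}(-1)^{r-1}\binom{q}{r}q^{n-s-r}$, which after multiplying by $q^{-(n-s-1)}$ becomes $\sum_{r=1}^{n-s}(-1)^{r-1}\binom{q}{r}q^{1-r}$. For $n-s+1\le r\le n$ each fiber is $0$ or $1$, so $\sum_{\mathcal{X}_r}|\mathcal{S}_{\mathcal{X}_r}^{\bfs a}|$ equals the number of $r$--subsets $\mathcal{X}_r$ for which some $g\in\fq[T]_{n-s-1}$ satisfies $(f_{\bfs a}+g)|_{\mathcal{X}_r}\equiv 0$, that is $\chi(\bfs a,r)$; and the terms $r>n$ vanish. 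Putting these together yields \eqref{eq: our formula for value sets}.

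I do not expect a genuine obstacle: the argument is an inclusion--exclusion over the roots in $\fq$ together with an elementary interpolation dimension count. The only delicate point is the transition at $r=n-s$ --- one must check that the two descriptions of $|\mathcal{S}_{\mathcal{X}_r}^{\bfs a}|$ agree there (both give $1$), and that for $r>n-s$ the identity $\sum_{\mathcal{X}_r}|\mathcal{S}_{\mathcal{X}_r}^{\bfs a}|=\chi(\bfs a,r)$ holds, which is exactly where the bound $|\mathcal{S}_{\mathcal{X}_r}^{\bfs a}|\le 1$ enters.
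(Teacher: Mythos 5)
Your proof is correct, and it is the natural argument for this statement. Each step checks out: dividing the preparatory identity $\sum_{\bfs{b}}\mathcal{V}(f_{\bfs b})=q\,\big|\{g\in\fq[T]_{n-s-1}:\mathcal{N}(f_{\bfs a}+g)>0\}\big|$ by $q^{n-s}$; the inclusion--exclusion expansion of $\bfs 1_{\{Z_g\neq\emptyset\}}=1-\prod_{x\in\fq}\bigl(1-\bfs 1_{\{x\in Z_g\}}\bigr)$, which after summing over $g$ correctly produces $\sum_{r\ge 1}(-1)^{r-1}\sum_{\mathcal{X}_r}|\mathcal{S}^{\bfs a}_{\mathcal{X}_r}|$; the identification of $\mathcal{S}^{\bfs a}_{\mathcal{X}_r}$ as a fiber of the linear evaluation map $\fq[T]_{n-s-1}\to\fq^r$, $g\mapsto(g(x))_{x\in\mathcal{X}_r}$; the surjectivity (via Lagrange interpolation) for $r\le n-s$ giving $|\mathcal{S}^{\bfs a}_{\mathcal{X}_r}|=q^{n-s-r}$, and the injectivity for $r\ge n-s$ giving $|\mathcal{S}^{\bfs a}_{\mathcal{X}_r}|\in\{0,1\}$, hence $\sum_{\mathcal{X}_r}|\mathcal{S}^{\bfs a}_{\mathcal{X}_r}|=\chi(\bfs a,r)$; and the vanishing for $r>n$ because $f_{\bfs a}+g$ is monic of degree $n$. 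The two counts agree at the boundary $r=n-s$ (both give $1$), and the sums separate cleanly without overlap. The paper itself defers this proof to \cite[Theorem~2.1]{CeMaPePr14}, and your argument is the standard one (inclusion--exclusion over roots plus a dimension count on the interpolation map), so it matches the intended reasoning in spirit; I see no gap.
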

According to this result, the asymptotic behavior of
$\mathcal{V}(n,s,\bfs{a})$ is determined by that of
$\chi(\bfs{a},r)$ for $n-s+1\leq r \leq n$. We shall show that each
$\chi(\bfs{a},r)$ can be expressed as the number of $\fq$--rational
points with pairwise--distinct coordinates of an affine
$\fq$--variety defined by symmetric polynomials.
%
%
\subsection{The number $\chi(\bfs{a},r)$ in terms of zeros of symmetric polynomials}
\label{subsec: relating interpol to rat points} Fix
$\bfs{a}\in\fq^{n-s}$ and $r$ with $n-s+1\leq r \leq n$. To estimate
$\chi(\bfs{a},r)$, we follow the approach of \cite{CeMaPePr14}.

Fix a set $\mathcal{X}_r:=\{x_1\klk x_r\}\subset\fq$ of $r$ elements
and $g\in\fq[T]_{n-s-1}$. Then $g$ belongs to
$\mathcal{S}_{\mathcal{X}_r}^{\bfs{a}}$ if and only if
$(T-x_1)\cdots(T-x_r)$ divides $f_{\bfs{a}}+g$ in $\fq[T]$. Since
$\deg g\le n-s-1<r$, we deduce that $-g$ is the remainder of the
division of $f_{\bfs{a}}$ by $(T-x_1)\cdots(T-x_r)$. In other words,
the set $\mathcal{S}_{\mathcal{X}_r}^{\bfs{a}}$ is not empty if and
only if the remainder of the division of $f_{\bfs{a}}$ by
$(T-x_1)\cdots (T-x_r)$ has degree at most $n-s-1$.

Let $X_1,\ldots, X_r$ be indeterminates over $\cfq$, let
$\bfs{X}:=(X_1\klk X_r)$ and
$$Q=(T-X_1)\cdots(T-X_r)\in\fq[\bfs{X}][T].$$
There exists $R_{\bfs{a}}\in\fq[\bfs{X}][T]$ with $\deg
R_{\bfs{a}}\leq r-1$ such that the following relation holds:
$$
f_{\bfs{a}}\equiv R_{\bfs{a}}\mod{Q}.
$$
Write $R_{\bfs{a}}=R_{r-1}^{\bfs{a}}(\bfs{X})T^{r-1}\plp
R_0^{\bfs{a}}(\bfs{X})$. Then $R_{\bfs{a}}(x_1\klk x_r,T)\in\fq[T]$
is the remainder of the division of $f_{\bfs{a}}$ by $(T-x_1)\cdots
(T-x_r)$. As a consequence, the set
$\mathcal{S}_{\mathcal{X}_r}^{\bfs{a}}$ is not empty if and only if
the following identities hold:
\begin{equation}\label{eq: system defining Vr}
R_j^{\bfs{a}}(x_1\klk x_r)=0\quad (n-s\le j\le r-1).
\end{equation}
On the other hand, if there exists $\bfs x:=(x_1\klk x_r)\in\fq^r$
with pairwise--distinct coordinates such that (\ref{eq: system
defining Vr}) holds, then the remainder of the division of
$f_{\bfs{a}}$ by $Q(\bfs{x},T)=(T-x_1)\cdots (T-x_r)$ is a
polynomial $r_{\bfs{a}}:=R_{\bfs{a}}(\bfs{x},T)$ of degree at most
$n-s-1$. This shows that $\mathcal{S}_{\mathcal{X}_r}^{\bfs{a}}$ is
not empty, where $\mathcal{X}_r:=\{x_1\klk x_r\}$. In other words,
we have the following result.
\begin{lemma}\label{lemma: system defining Vr}
Let $s,n\in\N$ with $1\le s\le n-2$, let $R_j^{\bfs{a}}$ $(n-s\le
j\le r-1)$ be the polynomials of (\ref{eq: system defining Vr}) and
let $\mathcal{X}_r:=\{x_1\klk x_r\}\subset\fq$ be a set with $r$
elements. Then $\mathcal{S}_{\mathcal{X}_r}^{\bfs{a}}$ is not empty
if and only if (\ref{eq: system defining Vr}) holds.
\end{lemma}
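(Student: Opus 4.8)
The plan is to unwind the definitions so that the nonemptiness of $\mathcal{S}_{\mathcal{X}_r}^{\bfs a}$ becomes a divisibility condition, and then to recognize the polynomials $R_j^{\bfs a}$ as the coefficients of the remainder of a Euclidean division which behaves well under specialization. Write $\bfs x:=(x_1\klk x_r)$ and $Q(\bfs x,T):=(T-x_1)\cdots(T-x_r)$. A polynomial $g\in\fq[T]_{n-s-1}$ lies in $\mathcal{S}_{\mathcal{X}_r}^{\bfs a}$ if and only if $(f_{\bfs a}+g)(x_i)=0$ for $1\le i\le r$, which, since the $x_i$ are pairwise distinct, is equivalent to $Q(\bfs x,T)\mid f_{\bfs a}+g$ in $\fq[T]$.

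Since $Q=(T-X_1)\cdots(T-X_r)\in\fq[\bfs X][T]$ is monic in $T$, the Euclidean division of $f_{\bfs a}$ by $Q$ in $\fq[\bfs X][T]$ is compatible with the substitution $\bfs X\mapsto\bfs x$; hence $R_{\bfs a}(\bfs x,T)=R_{r-1}^{\bfs a}(\bfs x)\,T^{r-1}\plp R_0^{\bfs a}(\bfs x)$ is precisely the remainder of the division of $f_{\bfs a}$ by $Q(\bfs x,T)$, a polynomial of degree at most $r-1$ in $T$. Now assume \eqref{eq: system defining Vr} holds, i.e. $R_j^{\bfs a}(\bfs x)=0$ for $n-s\le j\le r-1$. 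Then $\deg_T R_{\bfs a}(\bfs x,T)\le n-s-1$, so $g:=-R_{\bfs a}(\bfs x,T)\in\fq[T]_{n-s-1}$ and $f_{\bfs a}+g=f_{\bfs a}-R_{\bfs a}(\bfs x,T)$ is divisible by $Q(\bfs x,T)$; thus $g\in\mathcal{S}_{\mathcal{X}_r}^{\bfs a}$ and $\mathcal{S}_{\mathcal{X}_r}^{\bfs a}\neq\emptyset$. Conversely, if some $g\in\mathcal{S}_{\mathcal{X}_r}^{\bfs a}$, then $f_{\bfs a}+g=Q(\bfs x,T)\,h$ for some $h\in\fq[T]$, so $f_{\bfs a}=Q(\bfs x,T)\,h+(-g)$ with $\deg(-g)\le n-s-1<r$; by uniqueness of the quotient and remainder in the Euclidean division of $f_{\bfs a}$ by $Q(\bfs x,T)$ we get $-g=R_{\bfs a}(\bfs x,T)$, whence $\deg_T R_{\bfs a}(\bfs x,T)\le n-s-1$, i.e. \eqref{eq: system defining Vr} holds.

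There is essentially no real obstacle here: this is the bookkeeping already carried out in the paragraphs preceding the statement, organized into the two implications. The only point deserving a word of care is the compatibility of division by the monic polynomial $Q$ with the ring homomorphism $\fq[\bfs X]\to\fq$ sending $X_i\mapsto x_i$, which is a standard fact about polynomial division over a commutative ring and ensures that $R_{\bfs a}(\bfs x,T)$ really is the remainder of dividing $f_{\bfs a}$ by $Q(\bfs x,T)$.
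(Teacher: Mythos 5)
Your proof is correct and follows essentially the same route as the paper: reducing nonemptiness of $\mathcal{S}_{\mathcal{X}_r}^{\bfs a}$ to the divisibility $Q(\bfs x,T)\mid f_{\bfs a}+g$, identifying $R_{\bfs a}(\bfs x,T)$ as the remainder via specialization of the division in $\fq[\bfs X][T]$, and invoking uniqueness of the remainder to conclude. The only difference is that you make the two implications and the compatibility of Euclidean division with specialization explicit, whereas the paper states them more compactly in the discussion preceding the lemma.
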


It follows that the number $\chi(\bfs{a},r)$ of sets
$\mathcal{X}_r\subset \fq$ of $r$ elements such that
$\mathcal{S}_{\mathcal{X}_r}^{\bfs{a}}$ is not empty equals the
number of points $\bfs{x}:=(x_1,\ldots, x_r)\in\fq^r$ with
pairwise--distinct coordinates satisfying (\ref{eq: system defining
Vr}), up to permutations of coordinates, namely $1/r!$ times the
number of solutions $\bfs{x}\in\fq^r$ of the following system of
equalities and non-equalities:
$$
R_j^{\bfs{a}}(X_1,\ldots, X_r)=0\ \ (n-s\le j\le r-1),\ \prod_{1\le
i<j\le r}(X_i-X_j)\not=0.
$$

Fix $r$ with $n-s+1\le r\le n$ and assume that $2(s+1)\le n$ holds.
%
%
Next we show how the polynomials $R_j^{\bfs{a}}$ can be expressed in
terms of the elementary symmetric polynomials $\Pi_1 ,\ldots, \Pi_s$
of $\fq[X_1,\ldots, X_r]$. This is the content of \cite[Lemma
2.3]{CeMaPePr14} and \cite[Proposition 2.4]{CeMaPePr14}, whose
proofs are sketched here to illustrate the way in which the
elementary symmetric polynomials enter into play.

The first step is to obtain an expression for the remainder of the
division of $T^j$ by $Q:=(T-X_1)\cdots (T-X_r)$ for $r\le j\le n$.
For convenience of notation, we denote $\Pi_0:= 1$.
\begin{lemma}\label{lemma: formula Hij}
For $r\le j\le n$, the following congruence relation holds:
\begin{equation}
\label{eq: reduced expression T j} T^j\equiv
H_{r-1,j}T^{r-1}+H_{r-2,j}T^{r-2}+\cdots+H_{0,j}\mod Q,
\end{equation}
where each $H_{i,j}$ is equal to zero or an homogeneous element of
$\fq[X_1\klk X_r]$ of degree $j-i$. Furthermore, for $j-i\le r$, the
polynomial $H_{i,j}$ is a monic element of
$\fq[\Pi_1\klk\Pi_{j-i-1}][\Pi_{j-i}]$, up to a nonzero constant of
$\fq$, of degree $1$ in $\Pi_{j-i}$.
\end{lemma}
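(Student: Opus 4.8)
The plan is to prove both assertions by induction on $j$, starting from $j=r$. The base case $j=r$ is immediate: since $Q=(T-X_1)\cdots(T-X_r)=T^r-\Pi_1T^{r-1}+\Pi_2T^{r-2}-\cdots+(-1)^r\Pi_r$, reducing $T^r$ modulo $Q$ gives $T^r\equiv \Pi_1T^{r-1}-\Pi_2T^{r-2}+\cdots-(-1)^r\Pi_r \bmod Q$, so $H_{i,r}=(-1)^{r-i-1}\Pi_{r-i}$ for $0\le i\le r-1$. Each such $H_{i,r}$ is homogeneous of degree $r-i$ (recall $\deg\Pi_k=k$ with the standard grading on $\fq[X_1\klk X_r]$), and for $j-i=r-i\le r$ it is, up to the sign $(-1)^{r-i-1}$, the monic polynomial $\Pi_{r-i}\in\fq[\Pi_1\klk\Pi_{r-i-1}][\Pi_{r-i}]$ of degree $1$ in $\Pi_{r-i}$. (When $i=0$ one has $j-i=r$ and $H_{0,r}=(-1)^{r-1}\Pi_r$, which indeed lies in $\fq[\Pi_1\klk\Pi_{r-1}][\Pi_r]$ and is linear in $\Pi_r$.)

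For the inductive step, assume \eqref{eq: reduced expression T j} holds for some $j$ with $r\le j<n$, with the stated properties of the $H_{i,j}$. Multiply the congruence by $T$ to get $T^{j+1}\equiv H_{r-1,j}T^r+H_{r-2,j}T^{r-1}+\cdots+H_{0,j}T\bmod Q$, and then substitute $T^r\equiv \Pi_1T^{r-1}-\Pi_2T^{r-2}+\cdots-(-1)^r\Pi_r\bmod Q$ into the first term. Collecting coefficients of $T^i$ yields the recursion
$$
H_{i,j+1}=H_{i-1,j}+(-1)^{r-i-1}\Pi_{r-i}\,H_{r-1,j}\qquad(1\le i\le r-1),\qquad H_{0,j+1}=(-1)^{r-1}\Pi_r\,H_{r-1,j},
$$
with the convention $H_{-1,j}:=0$. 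From this recursion the homogeneity claim is immediate: $H_{i-1,j}$ has degree $j-(i-1)=j-i+1$, while $\Pi_{r-i}H_{r-1,j}$ has degree $(r-i)+(j-r+1)=j-i+1$, so $H_{i,j+1}$ is homogeneous of degree $(j+1)-i$ (or zero), as required.

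The only delicate point — and the step I expect to be the main obstacle — is tracking the structural claim that $H_{i,j+1}$ is, up to a nonzero constant, a monic element of $\fq[\Pi_1\klk\Pi_{(j+1)-i-1}][\Pi_{(j+1)-i}]$ of degree $1$ in $\Pi_{(j+1)-i}$, in the range $(j+1)-i\le r$. The idea is that the ``new'' highest-index symmetric polynomial appearing in $H_{i,j+1}$ is exactly $\Pi_{(j+1)-i}$, and it can only enter through the term $H_{i-1,j}$ (via the inductive hypothesis applied to the index pair $(i-1,j)$, for which $j-(i-1)=(j+1)-i\le r$), since the other term $(-1)^{r-i-1}\Pi_{r-i}H_{r-1,j}$ involves $\Pi_{r-i}$ with $r-i<(j+1)-i$ together with $H_{r-1,j}$, whose symmetric-polynomial indices are bounded by $j-r+1\le n-r<(j+1)-i$ in the relevant range (using $j<n\le 2r$, which follows from $2(s+1)\le n$ and $r\ge n-s+1$, hence $r\ge n/2+1>n/2$). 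One then checks that $H_{i-1,j}$ being monic linear in $\Pi_{(j+1)-i}$ forces $H_{i,j+1}$ to be monic linear in $\Pi_{(j+1)-i}$ as well, the second summand contributing only lower-index terms; care must be taken at the boundary $i=0$ and when $(j+1)-i=r$, where one uses the explicit form of the substitution for $T^r$. Assembling these observations completes the induction and the proof of the lemma.
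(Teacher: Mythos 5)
Your proof takes essentially the same route as the paper's: induction on $j$ starting from the explicit reduction of $T^r$ modulo $Q$, multiplying by $T$ and resubstituting to get the recursion
\[
H_{i,j+1}=H_{i-1,j}+(-1)^{r-i-1}\Pi_{r-i}\,H_{r-1,j}\quad (1\le i\le r-1),\qquad
H_{0,j+1}=(-1)^{r-1}\Pi_r\,H_{r-1,j},
\]
which is exactly what the paper writes down before deferring the structural bookkeeping to \cite[Lemma~2.3]{CeMaPePr14}. The homogeneity argument and the fact that the ``new'' symmetric polynomial $\Pi_{(j+1)-i}$ can only come from the summand $H_{i-1,j}$ are both correct and in line with the intended argument.

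There is, however, a small error in the inequality chain you use to bound the symmetric--polynomial indices occurring in $\Pi_{r-i}H_{r-1,j}$. You write that the indices in $H_{r-1,j}$ are bounded by $j-r+1\le n-r<(j+1)-i$, but the middle inequality $n-r<(j+1)-i$ is \emph{not} always valid in the relevant range: for instance, with $n=10$, $s=4$, $r=7$, $j=r=7$, $i=6$ one has $(j+1)-i=2\le r$ while $n-r=3\not<2$. Fortunately the detour through $n-r$ is unnecessary. The bounds you actually need are the direct ones: $r-i<(j+1)-i$ is equivalent to $r\le j$ (true by hypothesis), and $j-r+1<(j+1)-i$ is equivalent to $i<r$ (true since $i\le r-1$). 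With these two straightforward inequalities, both factors of the second summand involve only $\Pi_1,\dots,\Pi_{(j+1)-i-1}$, so that summand contributes only lower--index terms; combined with the inductive hypothesis for $H_{i-1,j}$ (applicable because $j-(i-1)=(j+1)-i\le r$) and for $H_{r-1,j}$ (applicable because $j-(r-1)\le n-1-(r-1)\le 2r-2-(r-1)=r-1\le r$), the structural claim follows. A minor further slip: from $s\le n/2-1$ and $r\ge n-s+1$ one in fact gets $r\ge n/2+2$ rather than $n/2+1$, but this does not affect anything. Once the inequality chain is replaced by the direct comparison, your argument is correct and supplies precisely the bookkeeping the paper omits by reference.
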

\begin{proof}
We argue by induction on $j\ge r$. Taking into account that
\begin{equation}\label{eq: reduced expression T r+1}
T^r\equiv  \Pi_1T^{r-1}-\Pi_2T^{r-2}+\cdots+(-1)^{r-1}\Pi_r
\mod{Q},\end{equation}
we immediately deduce (\ref{eq: reduced expression T j}) for $j=r$.
Next assume that (\ref{eq: reduced expression T j}) holds for a
given $j$ with $r\le j$. Multiplying both sides of (\ref{eq: reduced
expression T j}) by $T$ and combining with (\ref{eq: reduced
expression T r+1}) we obtain:
\begin{align*}
T^{j+1}&\equiv H_{r-1,j}T^r+H_{r-2,j}T^{r-1}+
\cdots+H_{0,j}T\\
&\equiv(\Pi_1H_{r-1,j}+
H_{r-2,j})T^{r-1}+\cdots+((-1)^{r-2}\Pi_{r-1}H_{r-1,j}+H_{0,j})T\\
&\quad +(-1)^{r-1}\Pi_rH_{r-1,j},
\end{align*}
where both congruences are taken modulo $Q$. Define
\begin{align*}
H_{k,j+1}&:=(-1)^{r-1-k}\Pi_{r-k}H_{r-1,j}+H_{k-1,j}\ \textit{for}\
1\le k\le r-1,\\H_{0,j+1}&:=(-1)^{r-1}\Pi_rH_{r-1,j}.
\end{align*} Then we have
$$T^{j+1}\equiv H_{r-1,j+1}T^{r-1}+H_{r-2,j+1}T^{r-2}+\cdots+H_{0,j+1}
\mod{Q}.$$
%
It can be seen that the polynomials $H_{k,j+1}$ have the form
asserted (see the proof of \cite[Lemma 2.3]{CeMaPePr14} for
details).
%
%
\end{proof}


Finally we express each $R_j^{\bfs{a}}$ in terms of the polynomials
$H_{i,j}$.
\begin{proposition}  \label{prop: formula for Rj}
Let $s,n\in\N$ with $1\le s\le n-2$ and $2(s+1)\le n$. For $n-s\le
j\le r-1$, the following identity holds:
\begin{equation} \label{eq: expression for Rj}
R_j^{\bfs{a}}=a_j+\sum_{i=r}^ n a_iH_{j,i},
\end{equation}
where the polynomials $H_{j,i}$ are defined in Lemma \ref{lemma:
formula Hij}. In particular, $R_j^{\bfs{a}}$ is a monic element of
$\fq[\Pi_1\klk \Pi_{n-1-j}][\Pi_{n-j}]$ of degree $n-j\le s$ for
$n-s\le j\le r-1$.
\end{proposition}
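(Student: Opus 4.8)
The plan is to derive the formula \eqref{eq: expression for Rj} directly from the definition of $R_{\bfs a}$ as the remainder of $f_{\bfs a}$ modulo $Q$, using Lemma \ref{lemma: formula Hij} as the only real input. Recall that $f_{\bfs a}=T^n+a_{n-1}T^{n-1}\plp a_{n-s}T^{n-s}$, and it is convenient to write it uniformly as $f_{\bfs a}=\sum_{i=n-s}^{n}a_iT^i$ with $a_n:=1$ (so that $a_i=0$ for $n-s\le i\le r-1$, since $r\ge n-s+1$, except where the coefficient is genuinely prescribed — note $r-1\ge n-s$, and the coefficients $a_{r}\klk a_{n}$ are the ``top'' ones together with the leading $1$). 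First I would split the sum $f_{\bfs a}=\sum_{i<r}a_iT^i+\sum_{i\ge r}a_iT^i$: the terms with $i\le r-1$ are already reduced modulo $Q$ (they have degree $<r=\deg Q$), and for each term with $r\le i\le n$ I substitute the congruence \eqref{eq: reduced expression T j} from Lemma \ref{lemma: formula Hij}. Collecting the coefficient of $T^j$ on the right-hand side for a fixed $j$ with $n-s\le j\le r-1$ then yields
$$
R_j^{\bfs a}=a_j+\sum_{i=r}^{n}a_iH_{j,i},
$$
which is exactly \eqref{eq: expression for Rj}. This is the bulk of the argument and is essentially bookkeeping, once one is careful that the index ranges line up ($j\le r-1$ guarantees $T^j$ contributes a ``diagonal'' term $a_j$, and $j\ge n-s$ ensures $a_j$ is among the prescribed coefficients).

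Next I would establish the structural claim that $R_j^{\bfs a}$ is a monic element of $\fq[\Pi_1\klk\Pi_{n-1-j}][\Pi_{n-j}]$ of degree $n-j$ in $\Pi_{n-j}$, up to a nonzero constant of $\fq$. The term $i=n$ in the sum contributes $a_n H_{j,n}=H_{j,n}$, which by the last sentence of Lemma \ref{lemma: formula Hij} — applied with the pair $(i,j)\rightsquigarrow(j,n)$, so that the relevant ``degree'' is $n-j$ — is, up to a nonzero constant of $\fq$, a monic element of $\fq[\Pi_1\klk\Pi_{n-j-1}][\Pi_{n-j}]$ of degree $1$ in $\Pi_{n-j}$; here one uses the hypothesis $2(s+1)\le n$ to guarantee $n-j\le s< r$, so that the ``$j-i\le r$'' clause of Lemma \ref{lemma: formula Hij} is in force and $\Pi_{n-j}$ is genuinely among the first $s$ elementary symmetric polynomials (hence a legitimate variable here). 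Every other term $a_iH_{j,i}$ with $r\le i\le n-1$ contributes, by the homogeneity part of Lemma \ref{lemma: formula Hij}, a polynomial that is homogeneous of degree $i-j<n-j$, hence lies in $\fq[\Pi_1\klk\Pi_{n-j-1}]$ and does not involve $\Pi_{n-j}$; the constant term $a_j$ likewise does not involve $\Pi_{n-j}$. Therefore $R_j^{\bfs a}$ has degree exactly $1$ — wait, degree $n-j$ — in $\Pi_{n-j}$: I should be careful and instead phrase the degree statement in terms of the weight/total degree in the $X_i$, namely $\deg R_j^{\bfs a}=n-j\le s$, which follows since $R_j^{\bfs a}=S_j(\Pi_1,\ldots,\Pi_s)$ for an appropriate $S_j$ and $H_{j,n}$ contributes the top-weight term, while the remaining summands have strictly smaller weight.

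The main obstacle I anticipate is purely one of careful index and degree tracking rather than any conceptual difficulty: one must check that the claimed ranges ($n-s\le j\le r-1$, $r\le i\le n$, together with $2(s+1)\le n$ and $n-s+1\le r\le n$) are mutually consistent with the hypotheses under which Lemma \ref{lemma: formula Hij} provides the monicity statement, in particular that $n-j$ never exceeds $r$ (so the ``$j-i\le r$'' clause applies) and never exceeds $s$ (so $\Pi_{n-j}$ is among $\Pi_1\klk\Pi_s$). Once these inequalities are verified, the identity \eqref{eq: expression for Rj} and the monicity/degree statement follow immediately, and the proposition is proved; the details of the bookkeeping are identical to those in \cite[Lemma 2.3 and Proposition 2.4]{CeMaPePr14}, to which one may refer.
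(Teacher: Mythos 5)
Your proof is correct and follows the same route as the paper: split $f_{\bfs a}=\sum_{i<r}a_iT^i+\sum_{i\ge r}a_iT^i$, reduce the high-degree terms via Lemma \ref{lemma: formula Hij}, collect coefficients of $T^j$, and then read off monicity and degree from the fact that $H_{j,n}$ dominates while the terms $H_{j,i}$ with $i<n$ have strictly smaller degree $i-j\le n-1-j$ and hence lie in $\fq[\Pi_1\klk\Pi_{n-1-j}]$. One minor slip: the parenthetical remark that ``$a_i=0$ for $n-s\le i\le r-1$'' is not right (those coefficients are precisely the prescribed ones), but it plays no role in the argument, which is otherwise sound and matches the paper's proof including the use of $2(s+1)\le n$ to ensure $i-j\le s\le n-s-2\le r$ so the structural clause of the lemma applies.
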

\begin{proof}
By Lemma \ref{lemma: formula Hij} we have the following congruence
relation for $r\le j\le n$:
$$T^j\equiv
H_{r-1,j}T^{r-1}+H_{r-2,j}T^{r-2}+\cdots+H_{0,j} \mod{Q}.$$
Hence we obtain
\begin{align*}
\sum_{j=n-s}^n a_jT^j&=\sum_{j=n-s}^{r-1}a_jT^j+\sum_{j=r}^n a_jT^j\\
&\equiv \sum_{j=n-s}^{r-1}a_jT^j+\sum_{j=r}^n a_j
\sum_{i=n-s}^{r-1}H_{i,j}T^i+\mathcal{O}(T^{n-s-1}) \mod{Q}\\
&\equiv \sum_{j=n-s}^{r-1}\bigg(a_j+ \sum_{i=r}^n
a_iH_{j,i}\bigg)T^j+\mathcal{O}(T^{n-s-1}) \mod{Q},
\end{align*}
where $\mathcal{O}(T^{n-s-1})$ represents a sum of terms of
$\fq[X_1\klk X_r][T]$ of degree at most $n-s-1$ in $T$. This shows
that the polynomials $R_j^{\bfs{a}}$ have the form asserted in
(\ref{eq: expression for Rj}). Furthermore, we observe that, for
each $H_{j,i}$ occurring in (\ref{eq: expression for Rj}), we have
$i-j\le s\le n-s-2\le r$. This implies that each $H_{j,i}$ is a
monic element of $\fq[\Pi_1\klk \Pi_{i-j-1}][\Pi_{i-j}]$ of degree
$i-j$. It follows that $R_j^{\bfs{a}}$ is a monic element of
$\fq[\Pi_1\klk \Pi_{n-1-j}][\Pi_{n-j}]$ of degree $n-j$ for $n-s\le
j\le r-1$. This finishes the proof.\end{proof}
%
%
\subsection{An estimate for $\mathcal{V}(n,s,\bfs{a})$}
Proposition \ref{prop: reduction value sets to interp sets} shows
that the asymptotic behavior of $\mathcal{V}(n,s,\bfs{a})$ is
determined by that of $\chi(\bfs{a},r)$ for $n-s+1\le r\le n$. Fix
$r$ with $n-s+1\le r\le n$. In Section \ref{subsec: relating
interpol to rat points} we associate to ${\bfs{a}}$ certain
polynomials $R_j^{\bfs{a}}\in\fq[X_1\klk X_r]$ $(n-s\le j\le r-1)$
with the property that the number of common $\fq$--rational zeros of
$R_{n-s}^{\bfs{a}}\klk R_{r-1}^{\bfs{a}}$ with pairwise distinct
coordinates equals $r!\chi(\bfs{a},r)$, namely
$$\chi(\bfs{a},r)=\frac{1}{r!}\left|\left\{\bfs{x}\in\fq^r:
R_j^{\bfs{a}}(\bfs{x})=0\, (n-s\le j\le r-1),x_k\not= x_l\, (1\le
k<l\le r)\right\}\right|.$$

According to Proposition \ref{prop: formula for Rj}, each
$R^{\bfs{a}}_j$ can be expressed as a polynomial in the elementary
symmetric polynomials $\Pi_1,\ldots,\Pi_s$ of $\fq[X_1,\ldots,X_r]$.
More precisely, if $Y_1,\ldots, Y_s$ are new indeterminates over
$\cfq$, then we may write
$$R^{\bfs{a}}_j=S^{\bfs{a}}_j(\Pi_1,\ldots,\Pi_{n-j})\quad (n-s\leq j \leq r-1),$$
where each $S^{\bfs{a}}_j\in \fq[Y_1,\ldots,Y_{n-j}]$ is a monic
element of $\fq[Y_1,\ldots,Y_{n-1-j}][Y_{n-j}]$ of degree $1$ in
$Y_{n-j}$, up to a nonzero constant of $\fq$. In particular, it is
easy to that
\begin{equation}\label{eq: isomorfismo}
\cfq[Y_1\klk Y_s]/(S_{n-s}^{\bfs{a}}\klk S_j^{\bfs{a}})\simeq\cfq[Y_1\klk Y_{n-j-1}]
\end{equation}
for $n-s\leq j \leq r-1$. Hence, $S_{n-s}^{\bfs{a}}\klk
S_{r-1}^{\bfs{a}}$ form a regular sequence of $\fq[Y_1,\ldots,Y_s]$,
namely these polynomials satisfy hypothesis $(\mathsf{H}_1)$ of
Section \ref{section: geometry symm complete inters}.

Further, by the isomorphism (\ref{eq: isomorfismo}) for $j=r-1$ we
deduce that $S_{n-s}^{\bfs{a}}\klk S_{r-1}^{\bfs{a}}$ form a radical
ideal of $\fq[Y_1,\ldots, Y_s]$ and the affine $\fq$--variety
$W_r^{\bfs{a}}\subset \A^s$ that they define is isomorphic to the
affine space $\A^{n-r}$. We conclude that $W_r^{\bfs{a}}$ is a
nonsingular variety and $(\partial \bfs{S}^{\bfs{a}}/\partial
\bfs{Y})(\bfs{y})$ has full rank for every $\bfs{y}\in\A^s$, that
is, $S_{n-s}^{\bfs{a}}\klk S_{r-1}^{\bfs{a}}$ satisfy
$(\mathsf{H}_2)$.

Finally, we show that the polynomials $S_{n-s}^{\bfs{a}}\klk
S_{r-1}^{\bfs{a}}$ satisfy $(\mathsf{H}_3)$. Lemma \ref{lemma:
formula Hij} and Proposition \ref{prop: formula for Rj} imply that
the homogeneous component of highest degree of each
$R_{j}^{\boldsymbol{a}}$ is $a_nH_{j,n}$ for $n-s\leq j \leq r-1$.
Lemma \ref{lemma: rel between R^di and S^wt} shows that
$a_nH_{j,n}=S_j^{\bfs{a},\mathrm{wt}}(\Pi_1\klk \Pi_s)$, where
$S_j^{\bfs{a},\mathrm{wt}}$ is the component of highest weight of
$S_j^{\bfs{a}}$. Since $H_{j,n}$ is a monic element of
$\fq[\Pi_1,\ldots,\Pi_{n-j-1}][\Pi_{n-j}]$ of degree $1$ in
$\Pi_{n-j}$, it follows that $S_j^{\bfs{a},\mathrm{wt}}$ is an
element of $\fq[Y_1,\ldots,Y_{n-j-1}][Y_{n-j}]$ of degree $1$ in
$Y_{n-j}$. As a consequence,
$$\cfq[Y_1\klk Y_s]/(S_{n-s}^{\bfs{a}, \mathrm{wt}}\klk
S_j^{\bfs{a},\mathrm{wt}})\simeq\cfq[Y_1\klk Y_{n-j-1}]$$
for $n-s\leq j \leq r-1 $. Arguing as above we conclude that
$S_{n-s}^{\bfs{a}, \mathrm{wt}}\klk S_{r-1}^{\bfs{a},\mathrm{wt}}$
satisfy $(\mathsf{H}_1)$ and $(\mathsf{H}_2)$, namely
$S_{n-s}^{\bfs{a}}\klk S_{r-1}^{\bfs{a}}$ satisfy $(\mathsf{H}_3)$.


Let $V_r^{\bfs{a}}\subset\A^r$ be the affine variety defined by
$R_{n-s}^{\bfs{a}}\klk R_{r-1}^{\bfs{a}}\in\fq[X_1,\ldots X_r]$.
Since $r-n+s\leq s \leq n-s-2$ and the polynomials
$S^{\bfs{a}}_{n-s},\ldots,S^{\bfs{a}}_{r-1}$ satisfy
$(\mathsf{H}_1)$, $(\mathsf{H}_2)$ and $(\mathsf{H}_3)$, we can
apply Corollary \ref{coro: nb point V_r distinct coordinates} in
this situation.
%
More precisely, let $V_{r,=}^{\bfs{a}}$ be the set of points of
$V_r^{\bfs{a}}$ with at least two distinct coordinates taking the
same value, namely
$$V_{r,=}^{\bfs{a}}:=\bigcup_{1\le i<j\le r}
V_r^{\bfs{a}}\cap\{X_i=X_j\},$$ and set
$V_{r,\not=}^{\bfs{a}}:=V_r^{\bfs{a}}\setminus V_{r,=}^{\bfs{a}}$.
By Corollary \ref{coro: nb point V_r distinct coordinates} we deduce
that
\begin{equation}\label{eq: est coord distintas}
\big||V_{r,\not=}^{\bfs{a}}(\fq)|-q^{n-s}\big|\le 14 D_r^3
\delta_r^2(q+1)q^{n-s-2}+
\binom{r}{2}\,\delta_r\,q^{n-s-1},\end{equation}
where $D_r:= \sum_{j=n-r+1}^s(j-1)$ and
$\delta_r=\prod_{j=n-r+1}^sj=s!/(n-r)!$. From \eqref{eq: est coord
distintas} we obtain the following estimate for $\chi(\bfs{a},r)$,
which constitutes the essential step in order to determine the
asymptotic behavior of $\mathcal{V}(n,s,\bfs{a})$.
\begin{theorem}\label{theorem: estimate chi(a,r)}
Let $n,r,s$ be integers with $1\leq s \leq n-2$ and $2(s+1)\leq n$.
For $n-s+1\le r\le n$ we have
$$
\left|\chi(\bfs{a},r)-\frac{q^{n-s}}{r!}\right|\le
\frac{r(r-1)}{2r!}\,\delta_rq^{n-s-1} +\frac{14}{r!} D_r^3
\delta_r^2(q+1)q^{n-s-2},$$
where $D_r:= \sum_{j=n-r+1}^s(j-1)$ and
$\delta_r=\prod_{j=n-r+1}^sj=s!/(n-r)!$.
\end{theorem}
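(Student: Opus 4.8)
The plan is to translate the combinatorial quantity $\chi(\bfs{a},r)$ directly into a count of $\fq$--rational points on the symmetric variety $V_r^{\bfs{a}}$ and then invoke the estimate just obtained in \eqref{eq: est coord distintas}. First I would recall the identity established in Section \ref{subsec: relating interpol to rat points}: $\chi(\bfs{a},r)$ equals $\frac{1}{r!}$ times the number of points $\bfs{x}\in\fq^r$ with pairwise--distinct coordinates satisfying $R_j^{\bfs{a}}(\bfs{x})=0$ for $n-s\le j\le r-1$, i.e.\ $\chi(\bfs{a},r)=\frac{1}{r!}|V_{r,\not=}^{\bfs{a}}(\fq)|$. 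This is exactly the content of Lemma \ref{lemma: system defining Vr} together with the observation immediately following it that the permutation action of the symmetric group on the coordinates identifies $r!$ distinct points with a single set $\mathcal{X}_r$. Once this identity is in place the theorem is just a division by $r!$ applied to a known inequality, so the only genuine work is checking that the hypotheses of Corollary \ref{coro: nb point V_r distinct coordinates} are met.

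Next I would verify those hypotheses explicitly. Writing $R_j^{\bfs{a}}=S_j^{\bfs{a}}(\Pi_1,\ldots,\Pi_{n-j})$ as in Proposition \ref{prop: formula for Rj}, with the index set reindexed so that the variables are $Y_1,\ldots,Y_s$, I would point to the discussion preceding the statement: the isomorphism \eqref{eq: isomorfismo} shows that $S_{n-s}^{\bfs{a}},\ldots,S_{r-1}^{\bfs{a}}$ form a regular sequence defining a radical ideal whose zero set $W_r^{\bfs{a}}\subset\A^s$ is isomorphic to $\A^{n-r}$, hence nonsingular, so $(\sf H_1)$ and $(\sf H_2)$ hold; and the analogous isomorphism for the highest--weight components, which follows from Lemma \ref{lemma: formula Hij} and Lemma \ref{lemma: rel between R^di and S^wt}, gives $(\sf H_3)$. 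The arithmetic constraints $m\le s\le r-m-2$ required by the corollary become $r-n+s\le s\le n-s-2$ here: the left inequality is $r\le n$, which holds by hypothesis, and the right inequality is the assumption $2(s+1)\le n$. Thus Corollary \ref{coro: nb point V_r distinct coordinates} applies with $m:=r-n+s$, ambient dimension $r$, so $r-m=n-s$, with $|\mathcal{I}|=\binom{r}{2}$, with $\delta=\delta_r=\prod_{j=n-r+1}^s j=s!/(n-r)!$, and with $D=D_r=\sum_{j=n-r+1}^s(j-1)$; here I use that $\deg R_j^{\bfs{a}}=n-j$ by Proposition \ref{prop: formula for Rj}, so the degrees of the $m$ polynomials run over $\{n-r+1,\ldots,s\}$.

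Finally I would simply state the conclusion: Corollary \ref{coro: nb point V_r distinct coordinates} yields
\[
\big||V_{r,\not=}^{\bfs{a}}(\fq)|-q^{n-s}\big|\le 14 D_r^3\delta_r^2(q+1)q^{n-s-2}+\binom{r}{2}\delta_r q^{n-s-1},
\]
which is precisely \eqref{eq: est coord distintas}, and dividing through by $r!$ and using $\chi(\bfs{a},r)=\frac{1}{r!}|V_{r,\not=}^{\bfs{a}}(\fq)|$ together with $\binom{r}{2}=r(r-1)/2$ gives the claimed bound. I do not expect any serious obstacle: the geometric heavy lifting has already been carried out in Sections \ref{section: geometry symm complete inters} and \ref{section: estimates complete inters}, and the reduction of $\chi(\bfs{a},r)$ to a point count was done in Section \ref{subsec: relating interpol to rat points}. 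The one point demanding a little care is the bookkeeping of indices---confirming that the $m=r-n+s$ polynomials $S_{n-s}^{\bfs{a}},\ldots,S_{r-1}^{\bfs{a}}$ in $s$ variables genuinely satisfy $m\le s\le r-m-2$ and that the degree list feeding into $D_r$ and $\delta_r$ is $\{n-r+1,\ldots,s\}$ rather than, say, $\{n-r+1,\ldots,s\}$ shifted by one---but this is a routine check once the dictionary between the two notational systems is written down.
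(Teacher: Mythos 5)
Your proposal is correct and follows the same route as the paper: the paper implicitly proves the theorem by applying Corollary \ref{coro: nb point V_r distinct coordinates} with $m=r-n+s$ (after verifying $(\sf H_1)$--$(\sf H_3)$ via the isomorphism \eqref{eq: isomorfismo}) to obtain \eqref{eq: est coord distintas}, and then dividing by $r!$ using the identity $\chi(\bfs{a},r)=\frac{1}{r!}|V_{r,\not=}^{\bfs{a}}(\fq)|$ established at the start of Section \ref{subsec: relating interpol to rat points}. Your bookkeeping of $m$, $r-m=n-s$, $|\mathcal{I}|=\binom{r}{2}$, and the degree list $\{n-r+1,\ldots,s\}$ matches the paper's derivation exactly.
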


Finally, combining Proposition \ref{prop: reduction value sets to
interp sets} and Theorem \ref{theorem: estimate chi(a,r)} we obtain
the following result.
\begin{corollary}\label{coro: estimate V(d,s,a) without analysis}
With assumptions as in Theorem \ref{theorem: estimate chi(a,r)}, we
have
$$\bigg|\mathcal{V}(n,s,\bfs{a})-\mu_n\,q-\frac{1}{2e}\bigg|\le
\frac{1}{2(n-s-1)!}+\frac{7}{q}+\sum_{r=n-s+1}^{n}
\Bigg(\frac{r(r-1)}{2r!}\,\delta_r+\frac{14}{r!} D_r^3
\delta_r^2(1+q^{-1})\Bigg).$$
\end{corollary}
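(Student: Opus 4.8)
The plan is to combine the combinatorial formula for $\mathcal{V}(n,s,\bfs{a})$ from Proposition \ref{prop: reduction value sets to interp sets} with the estimate for each $\chi(\bfs{a},r)$ from Theorem \ref{theorem: estimate chi(a,r)}, and then carefully control the resulting main and error terms. First I would write
$$
\mathcal{V}(n,s,\bfs{a})= \sum_{r=1}^{n-s}(-1)^{r-1}\binom{q}{r}q^{1-r}
+\frac{1}{q^{n-s-1}}\sum_{r=n-s+1}^{n}(-1)^{r-1}\chi(\bfs{a},r),
$$
and split the analysis into the ``small'' range $1\le r\le n-s$ and the ``large'' range $n-s+1\le r\le n$.

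For the small range, the plan is to estimate $\binom{q}{r}q^{1-r}$ against $1/r!$: one has $\binom{q}{r}q^{1-r}=\frac{1}{r!}\prod_{i=1}^{r-1}(1-i/q)$, so $\big|\binom{q}{r}q^{1-r}-\tfrac{1}{r!}\big|$ is $O(1/q)$ with an explicit constant obtained from expanding the product. Summing over $r$ from $1$ to $n-s$ and recalling $\mu_n=\sum_{r=1}^n(-1)^{r-1}/r!$, the difference between $\sum_{r=1}^{n-s}(-1)^{r-1}\binom{q}{r}q^{1-r}$ and $\mu_n q \cdot(1/q)$ — wait, more precisely, the claim is that $\sum_{r=1}^{n-s}(-1)^{r-1}\binom{q}{r}q^{1-r}$ approximates $q\sum_{r=1}^{n-s}(-1)^{r-1}/r!\cdot q^{-1}$; I must be careful here. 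Actually $\binom qr q^{1-r}$ has a factor $q$ coming from one of the terms, so $\binom qr q^{1-r}=\frac qr\cdot\frac{(q-1)\cdots(q-r+1)}{r!}q^{-(r-1)}$, and the leading behaviour is $q/r!$ corrected by lower-order terms. Thus $\sum_{r=1}^{n-s}(-1)^{r-1}\binom qr q^{1-r}$ equals $q\sum_{r=1}^{n-s}(-1)^{r-1}/r!$ plus an $O(1)$ term; adding the tail $q\sum_{r=n-s+1}^{n}(-1)^{r-1}/r!$ recovers $\mu_n q$, and since $\sum_{r=n-s+1}^\infty (-1)^{r-1}/r!$ is bounded by $1/(n-s+1)!$, the contribution of the missing tail toward $\mu_n q$ must be tracked, which explains the appearance of $\frac{1}{2(n-s-1)!}$ in the statement (and the constant $1/(2e)$ comes from a specific value of an alternating tail sum). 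I would make each of these steps fully explicit with its constant.

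For the large range, I would substitute the bound of Theorem \ref{theorem: estimate chi(a,r)}: writing $\chi(\bfs{a},r)=\frac{q^{n-s}}{r!}+E_r$ with $|E_r|\le \frac{r(r-1)}{2r!}\delta_r q^{n-s-1}+\frac{14}{r!}D_r^3\delta_r^2(q+1)q^{n-s-2}$, the term $\frac{1}{q^{n-s-1}}\sum_{r=n-s+1}^n(-1)^{r-1}\chi(\bfs{a},r)$ contributes a main part $q\sum_{r=n-s+1}^n(-1)^{r-1}/r!$ (which completes $\mu_n q$ together with the small-range main part) plus $\frac{1}{q^{n-s-1}}\sum(-1)^{r-1}E_r$, and the latter is at most $\sum_{r=n-s+1}^n\big(\frac{r(r-1)}{2r!}\delta_r+\frac{14}{r!}D_r^3\delta_r^2(1+q^{-1})\big)$, exactly the summand in the corollary. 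The $7/q$ term I expect to come from collecting the $O(1/q)$ pieces of the small-range estimate together with any $q^{-1}$ slack; I would bound those uniformly.

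The main obstacle will be bookkeeping: correctly identifying which pieces assemble into $\mu_n q$ and $\frac{1}{2e}$ versus which pieces are genuine error, and producing clean explicit constants (the $\frac{1}{2(n-s-1)!}$ and the $7/q$) rather than unspecified $O$-constants. In particular one must handle the alternating-series tail $\sum_{r>n-s}(-1)^{r-1}/r!$ and the binomial-coefficient approximation $\binom qr q^{1-r}\approx q/r!$ simultaneously, since both feed into the same main term; keeping the signs straight in the alternating sums is where an error is easiest to make. No deep new idea is needed beyond Theorem \ref{theorem: estimate chi(a,r)} and Proposition \ref{prop: reduction value sets to interp sets}; it is an exercise in careful estimation, and I would organize it by first isolating the main term, then bounding each error contribution separately and adding.
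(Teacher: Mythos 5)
Your decomposition and treatment of the large-range sum via Theorem \ref{theorem: estimate chi(a,r)} are exactly what the paper does; for the small-range sum the paper simply cites \cite{CeMaPePr14} for the bound $\big|\sum_{r=1}^{n-s}(-1)^{r-1}\big(\tbinom{q}{r}q^{1-r}-\tfrac{q}{r!}\big)-\tfrac{1}{2e}\big|\le\tfrac{1}{2(n-s-1)!}+\tfrac{7}{q}$ that you propose to re-derive, so the overall approach is the same. One caveat if you do carry out that re-derivation: the $\tfrac{1}{2e}$ and $\tfrac{1}{2(n-s-1)!}$ do not come from the tail of $\sum_r(-1)^{r-1}/r!$ as your sketch suggests, but from the leading correction $\tbinom{q}{r}q^{1-r}-\tfrac{q}{r!}=-\tfrac{1}{2(r-2)!}+O(1/q)$ together with the tail bound for the alternating series $\sum_{j\ge 0}(-1)^j/(2\,j!)=\tfrac{1}{2e}$.
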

\begin{proof}
From Proposition \ref{prop: reduction value sets to interp sets} we
deduce that
$$
\mathcal{V}(n,s,\bfs{a})-\mu_n\,q=\sum_{r=1}^{n-s}(-1)^{r-1}\bigg(\binom
{q} {r} q^{1-r}-\frac{q}{r!}\bigg)+\sum_{r=n-s+1}^{n} (-1)^{r-1}
\Bigg(\frac{\chi(\bfs{a},r)}{q^{n-s-1}}-\frac{q}{r!}\Bigg).
$$
By elementary calculations it can be seen that (see \cite[Corollary
4.2]{CeMaPePr14} for details)
$$\Bigg|\sum_{r=1}^{n-s}(-1)^{r-1}\bigg(\binom {q} {r}
q^{1-r}-\frac{q}{r!}\bigg)-\frac{1}{2e}\Bigg|\le
\frac{1}{2(n-s-1)!}+\frac{7}{q}.$$
Therefore, the corollary readily follows from Theorem \ref{theorem:
estimate chi(a,r)}.
\end{proof}

Finally, an analysis of the sum in the right--hand side of the
estimate of Corollary \ref{coro: estimate V(d,s,a) without analysis}
yields the following result, whose proof can be seen in \cite[\S
4.2]{CeMaPePr14}.
\begin{theorem}\label{theorem: final main result}
With assumptions as in Theorem \ref{theorem: estimate chi(a,r)}, we
have
$$
\left|\mathcal{V}(n,s,\bfs{a})-\mu_n\, q-\frac{1}{2e}\right|\le
\frac{(n-2)^5e^{2\sqrt{n}}}{2^{n-2}} +\frac{7}{q}. $$
\end{theorem}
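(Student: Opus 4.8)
The goal is to pass from the estimate of Corollary~\ref{coro: estimate V(d,s,a) without analysis}, which bounds $\big|\mathcal{V}(n,s,\bfs a)-\mu_n q-\tfrac1{2e}\big|$ by
$$
\frac{1}{2(n-s-1)!}+\frac{7}{q}+\sum_{r=n-s+1}^{n}\Bigg(\frac{r(r-1)}{2r!}\,\delta_r+\frac{14}{r!}D_r^3\delta_r^2(1+q^{-1})\Bigg),
$$
to the clean closed form $\tfrac{(n-2)^5 e^{2\sqrt n}}{2^{n-2}}+\tfrac7q$. The $7/q$ term is already isolated, so the entire task is to show that the first summand plus the sum over $r$ is at most $\tfrac{(n-2)^5 e^{2\sqrt n}}{2^{n-2}}$. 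Since this is exactly the calculation carried out in \cite[\S 4.2]{CeMaPePr14}, I would organize the proof as a reduction to that reference together with a self-contained sketch of the dominant contributions.

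\textbf{Key steps.} First I would record the explicit values $\delta_r=s!/(n-r)!$ and $D_r=\sum_{j=n-r+1}^{s}(j-1)$, and observe that both are increasing in $r$, so the worst term of the sum is near $r=n$; in particular $\delta_r\le s!$ and $D_r\le \binom{s}{2}\le \tfrac{s^2}{2}$ for all $r$ in range. Next, using $2(s+1)\le n$, i.e. $s\le \tfrac n2-1$, I would crudely bound each term $\tfrac{14}{r!}D_r^3\delta_r^2(1+q^{-1})$ by replacing $1+q^{-1}\le 2$ and estimating $\tfrac{(s!)^2}{r!}$; the binomial-type identity $\tfrac{(s!)^2}{r!}=\tfrac{(s!)^2}{r!}$ is handled by writing $r!\ge (n-s)!\,$ and telescoping, or more simply by noting $\tfrac{\delta_r^2}{r!}=\tfrac{(s!)^2}{(n-r)!^2 r!}$ and bounding $(n-r)!^2 r!\ge$ a suitable factorial. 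The point is that each term is bounded by something like $\tfrac{C s^6 (s!)^2}{(n-s)!}$, and summing over the at most $s$ values of $r$ gives a bound of the shape $\tfrac{C' s^7 (s!)^2}{(n-s)!}$. Finally I would invoke the elementary inequality $\tfrac{(s!)^2}{(n-s)!}\le \tfrac{e^{2\sqrt n}}{2^{\,n-2}}\cdot(\text{polynomial correction})$, valid for $s\le n/2-1$, which follows from Stirling's estimates: writing $s\approx n/2$, $(s!)^2/(n-s)!\approx (s!)^2/s!=s!$ and one checks $s!\le e^{2\sqrt n}2^{-(n-2)}\cdot(\dots)$ after absorbing polynomial factors into the $(n-2)^5$. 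Combining the polynomial prefactor $s^7$ (together with the $\tfrac1{2(n-s-1)!}$ term, which is negligible) into $(n-2)^5$ for $n$ large, and checking the finitely many small $n$ by hand, yields the claimed bound.

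\textbf{Main obstacle.} The delicate point is the final factorial/Stirling estimate: one must show $\sum_{r=n-s+1}^n \tfrac1{r!}D_r^3\delta_r^2$ is genuinely dominated by $e^{2\sqrt n}2^{-(n-2)}$ up to a polynomial in $n$, uniformly over all admissible $s$ with $2(s+1)\le n$. The quantity $\delta_r^2/r! = (s!)^2/((n-r)!^2\, r!)$ is maximized in a non-obvious way over $r\in\{n-s+1,\dots,n\}$ (the $r!$ in the denominator fights the $(n-r)!^{-2}$), so one needs either to identify the maximizing $r$ or to use a uniform bound such as $(n-r)!^2 r! \ge \big((n-s)!\big)^2 (n-s+1)$ ... ; getting the constant and the power of $n$ to fit inside $(n-2)^5$ while keeping the bound valid for \emph{all} small $n\ge 2(s+1)\ge 4$ is the part requiring care. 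Since this is precisely the computation performed in \cite[\S 4.2]{CeMaPePr14}, I would present the reduction in full and then refer the reader there for the remaining elementary but lengthy estimations, noting that the hypotheses $1\le s\le n-2$ and $2(s+1)\le n$ are exactly those under which that analysis applies.
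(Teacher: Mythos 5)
Your top-level plan — pass through Corollary~\ref{coro: estimate V(d,s,a) without analysis}, isolate the $7/q$ term, and then bound the remaining quantity by $(n-2)^5e^{2\sqrt n}/2^{n-2}$, referring to \cite[\S 4.2]{CeMaPePr14} for the elementary but lengthy estimates — is exactly what the paper does; the paper's ``proof'' consists of that citation. However, the intermediate bounds you sketch to outline that computation do not work, and the place where they fail is precisely the step you flag as ``delicate.''

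The concrete problem is that you discard the $(n-r)!^2$ factor in the denominator of $\delta_r^2/r!=(s!)^2/\big((n-r)!^2\,r!\big)$, arriving at a bound of the shape $C\,s^6(s!)^2/(n-s)!$ for each term. This is of the wrong order of magnitude. Take $s=n/2-1$, the extreme admissible case: then
$$
\frac{(s!)^2}{(n-s)!}=\frac{\big((n/2-1)!\big)^2}{(n/2+1)!}=\frac{(n/2-1)!}{(n/2)(n/2+1)},
$$
which grows super-exponentially in $n$, whereas $e^{2\sqrt n}/2^{n-2}$ decays exponentially. Your proposed inequality $(s!)^2/(n-s)!\le e^{2\sqrt n}\,2^{-(n-2)}\cdot(\text{polynomial correction})$ is therefore false for all large $n$: no polynomial prefactor, and in particular not $(n-2)^5$, can absorb a super-exponential-growth-versus-exponential-decay gap. (Concretely, for $n=50$, $s=24$, one has $(s!)^2/(n-s)!\approx 10^{21}$ while $(n-2)^5e^{2\sqrt n}/2^{n-2}\approx 1$.) The $(n-r)!^2$ in the denominator is essential and cannot be replaced by $1$; after writing $s!/k!=\binom{s}{k}(s-k)!$ with $k:=n-r$, the quantity $\delta_r^2/r!=\binom{s}{k}^2\,\big((s-k)!\big)^2/(n-k)!$ is controlled only because of the cancellation between $(n-k)!$ and the two factorial factors in the numerator, and the maximizing $k$ is an interior value in $\{0,\ldots,s-1\}$, not an endpoint. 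Tracking that trade-off is exactly the content of \cite[\S 4.2]{CeMaPePr14}. So while your instinct to defer the computation is correct and matches the paper, the bridging estimates in your sketch do not give a valid outline of it: if one followed them literally, the bound obtained would be astronomically larger than the claimed one.
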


Concerning the behavior of the bound of Theorem \ref{theorem: final
main result}, let $f:\Z_{\ge 4}\to\R$,
$f(n):=e^{2\sqrt{n}}(n-2)^52^{-n}$. Then $f$ is a unimodal function
which reaches its maximum value at $n_0:=14$, namely $f(n_0)\approx
1.08\cdot 10^5$. Furthermore, it is easy to see that
$\lim_{n\to+\infty}f(n)=0$, and indeed for $n\ge 51$, we have $f(n)<
1$.

This result constitutes an improvement of (\ref{eq: average value
set}) in several aspects. The first one is that it holds without any
restriction on the characteristic $p$ of $\fq$, while (\ref{eq:
average value set}) holds for $p>n$. The second aspect is that we
show that $\mathcal{V}(n,s,\bfs{a})=\mu_n\, q+\mathcal{O}(1)$, while
(\ref{eq: average value set}) only asserts that
$\mathcal{V}(n,s,\bfs{a})=\mu_n\, q+\mathcal{O}(q^{1/2})$. Finally,
we obtain an explicit expression for the constant underlying the
$\mathcal{O}$--notation with a good behavior. On the other hand, it
must be said that our result holds for $s\le n/2-1$, while (\ref{eq:
average value set}) holds for $s$ varying in a larger range of
values.

\bibliographystyle{amsalpha}

\bibliography{refs1,finite_fields}

\providecommand{\bysame}{\leavevmode\hbox to3em{\hrulefill}\thinspace}
\providecommand{\MR}{\relax\ifhmode\unskip\space\fi MR }
\providecommand{\MRhref}[2]{%
  \href{http://www.ams.org/mathscinet-getitem?mr=#1}{#2}
}
\providecommand{\href}[2]{#2}
\begin{thebibliography}{CMPP14}

\bibitem[AR10]{AuRo09}
Y.~Aubry and F.~Rodier, \emph{Differentially 4-uniform functions}, Arithmetic,
  Geometry, Cryptography and Coding Theory 2009 (Providence, RI) (D.~Kohel and
  R.~Rolland, eds.), Contemp. Math., vol. 521, Amer. Math. Soc., 2010,
  pp.~1--8.

\bibitem[BBR15]{BaBaRo15}
E.~Bank, L.~{Bary-Soroker}, and L.~Rosenzweig, \emph{Prime polynomials in short
  intervals and in arithmetic progressions}, Duke Math. J. \textbf{164} (2015),
  no.~2, 277--295.

\bibitem[BS59]{BiSD59}
B.~Birch and H.~{Swinnerton-Dyer}, \emph{Note on a problem of {Chowla}}, Acta
  Arith. \textbf{5} (1959), no.~4, 417--423.

\bibitem[CGH91]{CaGaHe91}
L.~Caniglia, A.~Galligo, and J.~Heintz, \emph{Equations for the projective
  closure and effective {Nullstellensatz}}, Discrete Appl. Math. \textbf{33}
  (1991), 11--23.

\bibitem[CM06]{CaMa06}
A.~Cafure and G.~Matera, \emph{Improved explicit estimates on the number of
  solutions of equations over a finite field}, Finite Fields Appl. \textbf{12}
  (2006), no.~2, 155--185.

\bibitem[CM07a]{CaMa07}
\bysame, \emph{An effective {Bertini} theorem and the number of rational points
  of a normal complete intersection over a finite field}, Acta Arith.
  \textbf{130} (2007), no.~1, 19--35.

\bibitem[CM07b]{ChMu07}
Q.~Cheng and E.~Murray, \emph{On deciding deep holes of {Reed-Solomon} codes},
  Theory and Applications of Models of Computation. 4th International
  Conference, TAMC 2007, Shanghai, China, May 22-25, 2007, Proceedings (Berlin
  Heidelberg) (J.-Y.~Cai et~al., ed.), Lecture Notes in Computer Science, vol.
  4484, Springer, 2007, pp.~296--305.

\bibitem[CMP12]{CaMaPr12}
A.~Cafure, G.~Matera, and M.~Privitelli, \emph{Singularities of symmetric
  hypersurfaces and {Reed}-{Solomon} codes}, Adv. Math. Commun. \textbf{6}
  (2012), no.~1, 69--94.

\bibitem[CMP15a]{CaMaPr15}
\bysame, \emph{Polar varieties, {Bertini's} theorems and number of points of
  singular complete intersections over a finite field}, Finite Fields Appl.
  \textbf{31} (2015), 42--83.

\bibitem[CMP15b]{CeMaPe15}
E.~Cesaratto, G.~Matera, and M.~{P\'erez}, \emph{The distribution of
  factorization patterns on linear families of polynomials over a finite
  field}, Preprint {\tt arXiv:1408.7014 [math.NT]}, to appear in Combinatorica,
  2015.

\bibitem[CMPP14]{CeMaPePr14}
E.~Cesaratto, G.~Matera, M.~{P\'erez}, and M.~Privitelli, \emph{On the value
  set of small families of polynomials over a finite field, {I}}, J. Combin.
  Theory Ser. A \textbf{124} (2014), no.~4, 203--227.

\bibitem[Coh70]{Cohen70}
S.~Cohen, \emph{The distribution of polynomials over finite fields}, Acta
  Arith. \textbf{17} (1970), 255--271.

\bibitem[Coh72]{Cohen72}
\bysame, \emph{Uniform distribution of polynomials over finite fields}, J.
  Lond. Math. Soc. (2) \textbf{6} (1972), no.~1, 93--102.

\bibitem[Coh73]{Cohen73}
\bysame, \emph{The values of a polynomial over a finite field}, Glasg. Math. J.
  \textbf{14} (1973), no.~2, 205--208.

\bibitem[Dan94]{Danilov94}
V.~Danilov, \emph{Algebraic varieties and schemes}, Algebraic Geometry I (I.R.
  Shafarevich, ed.), Encyclopaedia of Mathematical Sciences, vol.~23, Springer,
  Berlin Heidelberg New York, 1994, pp.~167--307.

\bibitem[Eis95]{Eisenbud95}
D.~Eisenbud, \emph{Commutative algebra with a view toward algebraic geometry},
  Grad. Texts in Math., vol. 150, Springer, New York, 1995.

\bibitem[FS84]{FrSm84}
M.~Fried and J.~Smith, \emph{Irreducible discriminant components of coefficient
  spaces}, Acta Arith. \textbf{44} (1984), no.~1, 59--72.

\bibitem[Ful84]{Fulton84}
W.~Fulton, \emph{Intersection theory}, Springer, Berlin Heidelberg New York,
  1984.

\bibitem[GL02a]{GhLa02a}
S.~Ghorpade and G.~Lachaud, \emph{{\'Etale} cohomology, {Lefschetz} theorems
  and number of points of singular varieties over finite fields}, Mosc. Math.
  J. \textbf{2} (2002), no.~3, 589--631.

\bibitem[GL02b]{GhLa02}
\bysame, \emph{Number of solutions of equations over finite fields and a
  conjecture of {Lang} and {Weil}}, Number Theory and Discrete Mathematics
  (Chandigarh, 2000) (New Delhi) (A.K.~Agarwal et~al., ed.), Hindustan Book
  Agency, 2002, pp.~269--291.

\bibitem[GM15]{GoMa15}
E.~Gorla and M.~Massierer, \emph{Point compression for the trace zero subgroup
  over a small degree extension field}, Des. Codes Cryptography \textbf{75}
  (2015), no.~2, 335--357.

\bibitem[Har92]{Harris92}
J.~Harris, \emph{Algebraic geometry: a first course}, Grad. Texts in Math.,
  vol. 133, Springer, New York Berlin Heidelberg, 1992.

\bibitem[Hei83]{Heintz83}
J.~Heintz, \emph{{Definability} and fast quantifier elimination in
  algebraically closed fields}, Theoret. Comput. Sci. \textbf{24} (1983),
  no.~3, 239--277.

\bibitem[Kun85]{Kunz85}
E.~Kunz, \emph{Introduction to commutative algebra and algebraic geometry},
  Birkh{\"a}user, Boston, 1985.

\bibitem[LN83]{LiNi83}
R.~Lidl and H.~Niederreiter, \emph{Finite fields}, Addison--Wesley, Reading,
  Massachusetts, 1983.

\bibitem[LP02]{LaPr02}
A.~Lascoux and P.~Pragracz, \emph{Jacobian of symmetric polynomials}, Ann.
  Comb. \textbf{6} (2002), no.~2, 169--172.

\bibitem[MP13]{MuPa13}
G.~Mullen and D.~Panario, \emph{Handbook of finite fields}, CRC Press, Boca
  Raton, FL, 2013.

\bibitem[MPP14]{MaPePr14}
G.~Matera, M.~{P\'erez}, and M.~Privitelli, \emph{On the value set of small
  families of polynomials over a finite field, {II}}, Acta Arith. \textbf{165}
  (2014), no.~2, 141--179.

\bibitem[MPP16]{MaPePr16}
\bysame, \emph{Explicit estimates for the number of rational points of singular
  complete intersections over a finite field}, J. Number Theory \textbf{158}
  (2016), no.~2, 54--72.

\bibitem[Pol13]{Pollack13}
P.~Pollard, \emph{Irreducible polynomials with several prescribed
  coefficients}, Finite Fields Appl. \textbf{22} (2013), 70--78.

\bibitem[Rod09]{Rodier09}
F.~Rodier, \emph{Borne sur le degr\'e des polyn\^omes presque parfaitement
  non--lin\'eaires}, Arithmetic, geometry, cryptography and coding theory.
  Proceedings of the 11th international conference, CIRM, Marseille, France,
  November 5--9, 2007 (Providence, RI) (G.~Lachaud et~al., ed.), Contemp.
  Math., vol. 487, Amer. Math. Soc., 2009, pp.~169--181.

\bibitem[Sha94]{Shafarevich94}
I.R. Shafarevich, \emph{Basic algebraic geometry: {Varieties} in projective
  space}, Springer, Berlin Heidelberg New York, 1994.

\bibitem[Sid94]{Sidelnikov94}
V.~Sidelnikov, \emph{Decoding {Reed--Solomon} codes beyond $(d-1)/2$ and zeros
  of multivariate polynomials}, Probl. Inf. Transm. \textbf{30} (1994), no.~1,
  44--59.

\bibitem[Uch55]{Uchiyama55b}
S.~Uchiyama, \emph{Note on the mean value of {$V(f)$}. {II}}, Proc. Japan Acad.
  \textbf{31} (1955), no.~6, 321--323.

\bibitem[Vog84]{Vogel84}
W.~Vogel, \emph{Results on {B\'ezout}'s theorem}, Tata Inst. Fundam. Res. Lect.
  Math., vol.~74, Tata Inst. Fund. Res., Bombay, 1984.

\end{thebibliography}
\end{document}

\begin{proof}
Theorem \ref{theorem: proj closure of Vr is abs irred} shows that
$V_r$ is a complete singular which is regular in codimension 2.
Therefore, by Theorem \ref{theorem: normal complete int implies
irred} we conclude that it is absolutely irreducible.

Let $\bfs{x}:=(x_1\klk x_r)$ be an arbitrary point of $V_r^=$.
Without loss of generality we may assume that $x_{r-1} = x_r$. Then
$\bfs{x}$ is a point of the variety $V_{r-1,r}\subset
\{X_{r-1}=X_r\}$ defined by the polynomials
$R_1(\Pi_{1}^*,\ldots,\Pi_s^*)\klk R_m(\Pi_{1}^*,\ldots,\Pi_s^*)\in
\fq[X_1,\ldots,X_r]$, where $\Pi_i^*:=\Pi_{i}(X_1,\ldots,
X_{r-1},X_{r-1})$ is the polynomial of $\fq[X_1,\ldots, X_{r-1}]$
obtained by substituting $X_{r-1}$ for $X_r$ in the $i$th elementary
symmetric polynomial of $\fq[X_1,\ldots, X_r]$. Observe that
\begin{equation}\label{eq: pi_i en x_(r-1) igual x_r}
\Pi_{i}^* = \Pi_i^{r-2} + 2X_{r-1}\cdot \Pi_{i-1}^{r-2} + X_{r-1}^2
\cdot \Pi_{i-2}^{r-2}
\end{equation}
where $\Pi_j^l$ denotes the $j$th elementary symmetric polynomial of
$\fq[X_1,\ldots, X_l]$ for $1\le j\le s$ and $1\le l\le r$.

We claim that the singular locus of $\mathrm{pcl}(V_{r-1,r})$ and
the singular locus of $V_{r-1,r}$ at infinity have dimension at most
$s$ and $s-1$, respectively. In order to show this claim, we first
assume that the characteristic $p$ of $\fq$ is greater than $2$.
Then, using (\ref{eq: pi_i en x_(r-1) igual x_r}) it can be proved
that all the maximal minors of the Jacobian matrix
$(\partial\Pi^*_i/\partial X_j)_{1\le i\le s,1\le j\le r-1}$ are
equal, up to multiplication by a nonzero constant, to the
corresponding minors of the Jacobian matrix
$(\partial\Pi^{r-1}_i/\partial X_j)_{1\le i\le s,1\le j\le r-1}$.
Then the proofs of Theorem \ref{theorem: dimension singular locus
V_r} and Lemma \ref{lemma: dim singular locus Vr at infinity} go
through with minor corrections and show our claim.

Now assume $p=2$. From (\ref{eq: pi_i en x_(r-1) igual x_r}) we see
that the first partial derivative of $\Pi_j^*$ with respect to
$X_{r-1}$ is equal to zero. Furthermore, it is easy to see that the
nonzero $(s\times s)$--minor of the Jacobian matrix
$(\partial{\Pi_{i}^*}/\partial{X_j})_{1\le i\le s,1\le j\le r-1}$
determined by the columns $1 \leq i_1 < i_2 < \cdots < i_s \leq r-2$
equals the corresponding nonzero minor of
$(\partial{\Pi_{i}^{r-2}}/\partial{X_j})_{1\le i\le s,1\le j\le
r-1}$. This shows that each nonzero maximal minor of
$(\partial{\Pi_{i}^*}/\partial{X_j})_{1\le i\le s,1\le j\le r-1}$ is
a Vandermonde determinant depending on $s$ of the indeterminates
$X_1, \ldots, X_{r-2}$. In particular, the vanishing of all these
minors does not impose any condition on the variable $X_{r-1}$.

Let $\Sigma_{r-1,r}$ denote the singular locus of $V_{r-1,r}$.
Arguing as in the proof of Theorem \ref{theorem: dimension singular
locus V_r}, we obtain the following inclusion (see Remark \ref{obs:
caracterizacion de las variedades lineales}):
\begin{equation}\label{eq: singular locus sigma k k+1}
\Sigma_{r,r-1}\subset \bigcup_{\mathcal{I}}\mathcal{L_{I}},
\end{equation}
where $\mathcal{I}:=\{I_1,\ldots, I_s\}$ runs over all the
partitions of $\{1,\ldots,r\}$ into $s$ nonempty subsets $I_j\subset
\{1,\ldots,r\}$ such that $I_j\subset\{1,\dots,r-2\}$ for $1\le j\le
s-1$ and $I_s:=\{r-1,r\}$, and $\mathcal{L_{I}}$ is the linear
variety
$$
\mathcal{L_{I}}:=
\mathrm{span}(\bfs{v}^{(I_1)},\ldots,\bfs{v}^{(I_s)})
$$
spanned by the vectors $\bfs{v}^{I_j}:=
(v_1^{I_j},\ldots,v_r^{I_j})$ defined by $v_k^{I_j}:=1$ for $k\in
I_j$ and $v_k^{I_j}:=0$ for $k\notin I_j$. It follows that
$\Sigma_{r-1,r}$ has dimension at most $s$.

Finally, arguing as in Lemma \ref{lemma: dim singular locus Vr at
infinity} we conclude that the singular locus of $V_{r-1,r}$ at
infinity has dimension at most $s-2$.

Summarizing, we have that, independently of the characteristic $p$
of $\fq$, the singular locus of $\mathrm{pcl}(V_{r-1,r})$ and the
singular locus of $V_{r-1,r}$ at infinity have dimension at most $s$
and $s-1$.

From the B\'ezout inequality (\ref{eq: Bezout}) it follows that
\begin{equation}\label{eq: deg V_r=}
\deg V_r^{*} \leq \delta \binom {r}{2}.
\end{equation}
\end{proof}